\newtheorem{cor}[subsubsection]{Corollary}
\newtheorem{lem}[subsubsection]{Lemma}
\newtheorem{prop}[subsubsection]{Proposition}
\newtheorem{thm}[subsubsection]{Theorem}
\theoremstyle{definition}
\newtheorem{defn}[subsubsection]{Definition}
\newtheorem{example}[subsubsection]{Example}
\newcommand{\iso}{\buildrel{\sim}\over{\longrightarrow}}
\theoremstyle{definition}
\theoremstyle{remark}
\newtheorem{rem}[subsubsection]{Remark}
\newcommand{\thmref}[1]{Theorem~\ref{#1}}
\newcommand{\secref}[1]{Sect.~\ref{#1}}
\newcommand{\lemref}[1]{Lemma~\ref{#1}}
\newcommand{\propref}[1]{Proposition~\ref{#1}}
\newcommand{\corref}[1]{Corollary~\ref{#1}}
\newcommand{\remref}[1]{Remark~\ref{#1}}
\numberwithin{equation}{section}
\newcommand{\nc}{\newcommand}
\nc{\renc}{\renewcommand}
\nc{\ssec}{\subsection}
\nc{\sssec}{\subsubsection}
\nc{\on}{\operatorname}
\nc\ol{\overline}
\nc\wt{\widetilde}
\nc\tboxtimes{\wt{\boxtimes}}
\nc{\alp}{\alpha}
\newcommand{\limto}{{\displaystyle\lim_{\longrightarrow}}}
\newcommand{\rightlim}{\mathop{\limto}}
\newcommand{\leftlim}{\mathop{\displaystyle\lim_{\longleftarrow}}}
\newcommand{\limfromn}{\leftlim\limits_{\raise3pt\hbox{$n$}}}
\newcommand{\limton}{\rightlim\limits_{\raise3pt\hbox{$n$}}}
\nc{\subscheme}{{``sub"\-scheme }}
\nc{\subschemes}{{``sub"schemes }}
\nc{\subspaces}{{``sub"spaces }}
\nc{\subspace}{{``sub"space }}
\newcommand{\mono}{\hookrightarrow}
\newcommand{\epi}{\twoheadrightarrow}
\nc{\Zp}{{Z_{\infty}^+}}
\nc{\Rn}{{R_n}}
\nc{\BIG}{{\Phi}}
\nc{\ZZ}{{\mathbb Z}}
\nc{\NN}{{\mathbb N}}
\nc{\OO}{{\mathbb O}}
\renc{\SS}{{\mathbb S}}
\nc{\DD}{{\mathbb D}}
\nc{\GG}{{\mathbb G}}
\nc{\Fq}{{\mathbb F}_q}
\nc{\Fqb}{\ol{{\mathbb F}_q}}
\nc{\Ql}{\ol{{\mathbb Q}_\ell}}
\nc{\id}{\text{id}}
\nc\X{\mathcal X}
\nc{\bl}{\on{bl}}
\nc{\st}{\on{st}}
\nc{\der}{\on{der}}
\nc{\Eq}{\on{Eq}}
\nc{\ET}{\on{et}}
\nc{\Et}{\on{Et}}
\nc{\ev}{\on{ev}}
\nc{\fin}{\on{fin}}
\nc{\Hom}{\on{Hom}}
\nc{\QC}{\on{QC}}
\nc{\Lie}{\on{Lie}}
\nc{\Loc}{\on{Loc}}
\nc{\Pic}{\on{Pic}}
\nc{\Bun}{\on{Bun}}
\nc{\IC}{\on{IC}}
\nc{\Aut}{\on{Aut}}
\nc{\pr}{\on{pr}}
\nc{\rk}{\on{rk}}
\nc{\Sh}{\on{Sh}}
\nc{\Perv}{\on{Perv}}
\nc{\pos}{{\on{pos}}}
\nc{\Conv}{\on{Conv}}
\nc{\sep}{\on{sep}}
\nc{\Sph}{\on{Sph}}
\nc{\Sym}{\on{Sym}}
\nc{\BunBb}{\overline{\Bun}_B}
\nc{\BunBbm}{\overline{\Bun}_{B^-}}
\nc{\BunBbel}{\overline{\Bun}_{B,el}}
\nc{\BunBbmel}{\overline{\Bun}_{B^-,el}}
\nc{\Buno}{\overset{o}{\Bun}}
\nc{\BunPb}{{\overline{\Bun}_P}}
\nc{\BunBM}{\Bun_{B(M)}}
\nc{\BunBMb}{\overline{\Bun}_{B(M)}}
\nc{\BunPbw}{{\widetilde{\Bun}_P}}
\nc{\BunBP}{\widetilde{\Bun}_{B,P}}
\nc{\GUb}{\overline{G/U}}
\nc{\GUPb}{\overline{G/U(P)}}
\nc{\Hhom}{\underline{\on{Hom}}}
\nc\syminfty{\on{Sym}^{\infty}}
\nc\lal{\ol{\lambda}}
\nc\xl{\ol{x}}
\nc\thl{\ol{\theta}}
\nc\nul{\ol{\nu}}
\nc\mul{\ol{\mu}}
\nc{\oX}{\overset{o}{X}{}}
\nc{\hl}{\overset{\leftarrow}h{}}
\nc{\hr}{\overset{\rightarrow}h{}}
\nc{\M}{{\mathcal M}}
\nc{\N}{{\mathcal N}}
\nc{\F}{{\mathcal F}}
\nc{\D}{{\mathcal D}}
\nc{\Q}{{\mathcal Q}}
\nc{\Y}{{\mathcal Y}}
\nc{\G}{{\mathcal G}}
\nc{\E}{{\mathcal E}}
\nc{\CalC}{{\mathcal C}}
\nc\Dh{\widehat{\D}}
\nc{\C}{{\mathcal C}}
\nc{\K}{{\mathcal K}}
\renewcommand{\H}{{\mathcal H}}
\nc{\T}{{\mathcal T}}
\nc{\V}{{\mathcal V}}
\renc{\P}{{\mathcal P}}
\nc{\A}{{\mathcal A}}
\nc{\B}{{\mathcal B}}
\nc{\U}{{\mathcal U}}
\nc{\Gr}{{\on{Gr}}}
\nc{\frn}{{\check{\mathfrak u}(P)}}
\nc{\p}{\mathfrak p}
\nc{\q}{\mathfrak q}
\nc\f{{\mathfrak f}}
\nc{\qo}{{\mathfrak q}}
\nc{\po}{{\mathfrak p}}
\nc{\s}{{\mathfrak s}}
\nc\w{\text{w}}
\nc\Spec{\on{Spec}}
\nc\Spf{\on{Spf}}
\nc\Mod{\on{Mod}}
\nc{\tw}{\widetilde{\mathfrak t}}
\nc{\pw}{\widetilde{\mathfrak p}}
\nc{\qw}{\widetilde{\mathfrak q}}
\nc{\jw}{\widetilde j}
\nc{\grb}{\overline{\Gr}}
\nc{\I}{\mathcal I}
\nc{\lambdach}{{\check\lambda}}
\nc{\Lambdach}{{\check\Lambda}{}}
\nc{\much}{{\check\mu}}
\nc{\omegach}{{\check\omega}}
\nc{\nuch}{{\check\nu}}
\nc{\etach}{{\check\eta}}
\nc{\alphach}{{\check\alpha}}
\nc{\betach}{{\check\beta}}
\nc{\rhoch}{{\check\rho}}
\nc{\ch}{{\check h}}
\nc{\Hb}{\overline{\H}}
\nc{\BA}{{\mathbb{A}}}
\nc{\BB}{{\mathbb{B}}}
\nc{\BC}{{\mathbb{C}}}
\nc{\BG}{{\mathbb{G}}}
\nc{\BM}{{\mathbb{M}}}
\nc{\BD}{{\mathbb{D}}}
\nc{\BN}{{\mathbb{N}}}
\nc{\BP}{{\mathbb{P}}}
\nc{\BQ}{{\mathbb{Q}}}
\nc{\BR}{{\mathbb{R}}}
\nc{\BX}{{\mathbb{X}}}
\nc{\BXp}{\mathbb{X}^+}
\nc{\BXm}{\mathbb{X}^-}
\nc{\BXpm}{\mathbb{X}^{\pm}}
\nc{\BY}{{\mathbb{Y}}}
\nc{\BZ}{{\mathbb{Z}}}
\nc{\BS}{{\mathbb{S}}}
\nc{\CA}{{\mathcal{A}}}
\nc{\CB}{{\mathcal{B}}}
\nc{\CE}{{\mathcal{E}}}
\nc{\CF}{{\mathcal{F}}}
\nc{\CG}{{\mathcal{G}}}
\nc{\CL}{{\mathcal{L}}}
\nc{\CC}{{\mathcal{C}}}
\nc{\CM}{{\mathcal{M}}}
\nc{\CN}{{\mathcal{N}}}
\nc{\CK}{{\mathcal{K}}}
\nc{\CO}{{\mathcal{O}}}
\nc{\CP}{{\mathcal{P}}}
\nc{\CQ}{{\mathcal{Q}}}
\nc{\CR}{{\mathcal{R}}}
\nc{\CS}{{\mathcal{S}}}
\nc{\oCS}{\overset{\circ}{\mathcal{S}}}
\nc{\CT}{{\mathcal{T}}}
\nc{\CU}{{\mathcal{U}}}
\nc{\CV}{{\mathcal{V}}}
\nc{\CW}{{\mathcal{W}}}
\nc{\CX}{{\mathcal{X}}}
\nc{\CY}{{\mathcal{Y}}}
\nc{\CZ}{{\mathcal{Z}}}
\nc{\CI}{{\mathcal{I}}}
\nc{\CJ}{{\mathcal{J}}}
\nc{\csM}{{\check{\mathcal A}}{}}
\nc{\oM}{{\overset{\circ}{\mathcal M}}{}}
\nc{\obM}{{\overset{\circ}{\mathbf M}}{}}
\nc{\oCA}{{\overset{\circ}{\mathcal A}}{}}
\nc{\obA}{{\overset{\circ}{\mathbf A}}{}}
\nc{\ooM}{{\overset{\circ}{M}}{}}
\nc{\osM}{{\overset{\circ}{\mathsf M}}{}}
\nc{\vM}{{\overset{\bullet}{\mathcal M}}{}}
\nc{\nM}{{\underset{\bullet}{\mathcal M}}{}}
\nc{\oD}{{\overset{\circ}{\mathcal D}}{}}
\nc{\obD}{{\overset{\circ}{\mathbf D}}{}}
\nc{\oA}{{\overset{\circ}{\mathbb A}}{}}
\nc{\op}{{\overset{\bullet}{\mathbf p}}{}}
\nc{\cp}{{\overset{\circ}{\mathbf p}}{}}
\nc{\oU}{{\overset{\bullet}{\mathcal U}}{}}
\nc{\oZ}{{\overset{\circ}{\mathcal Z}}{}}
\nc{\ofZ}{{\overset{\circ}{\mathfrak Z}}{}}
\nc{\oF}{{\overset{\circ}{\fF}}}
\nc{\fa}{{\mathfrak{a}}}
\nc{\fb}{{\mathfrak{b}}}
\nc{\fd}{{\mathfrak{d}}}
\nc{\fg}{{\mathfrak{g}}}
\nc{\fgl}{{\mathfrak{gl}}}
\nc{\fh}{{\mathfrak{h}}}
\nc{\fj}{{\mathfrak{j}}}
\nc{\fl}{{\mathfrak{l}}}
\nc{\fm}{{\mathfrak{m}}}
\nc{\fn}{{\mathfrak{n}}}
\nc{\fu}{{\mathfrak{u}}}
\nc{\fp}{{\mathfrak{p}}}
\nc{\fr}{{\mathfrak{r}}}
\nc{\fs}{{\mathfrak{s}}}
\nc{\ft}{{\mathfrak{t}}}
\nc{\fsl}{{\mathfrak{sl}}}
\nc{\hsl}{{\widehat{\mathfrak{sl}}}}
\nc{\hgl}{{\widehat{\mathfrak{gl}}}}
\nc{\hg}{{\widehat{\mathfrak{g}}}}
\nc{\chg}{{\widehat{\mathfrak{g}}}{}^\vee}
\nc{\hn}{{\widehat{\mathfrak{n}}}}
\nc{\chn}{{\widehat{\mathfrak{n}}}{}^\vee}
\nc{\fA}{{\mathfrak{A}}}
\nc{\fB}{{\mathfrak{B}}}
\nc{\fD}{{\mathfrak{D}}}
\nc{\fE}{{\mathfrak{E}}}
\nc{\fF}{{\mathfrak{F}}}
\nc{\fG}{{\mathfrak{G}}}
\nc{\fK}{{\mathfrak{K}}}
\nc{\fL}{{\mathfrak{L}}}
\nc{\fM}{{\mathfrak{M}}}
\nc{\fN}{{\mathfrak{N}}}
\nc{\fP}{{\mathfrak{P}}}
\nc{\fU}{{\mathfrak{U}}}
\nc{\fV}{{\mathfrak{V}}}
\nc{\fZ}{{\mathfrak{Z}}}
\nc{\bb}{{\mathbf{b}}}
\nc{\bc}{{\mathbf{c}}}
\nc{\bd}{{\mathbf{d}}}
\nc{\be}{{\mathbf{e}}}
\nc{\bj}{{\mathbf{j}}}
\nc{\bn}{{\mathbf{n}}}
\nc{\bp}{{\mathbf{p}}}
\nc{\bq}{{\mathbf{q}}}
\nc{\bu}{{\mathbf{u}}}
\nc{\bv}{{\mathbf{v}}}
\nc{\bx}{{\mathbf{x}}}
\nc{\bs}{{\mathbf{s}}}
\nc{\by}{{\mathbf{y}}}
\nc{\bw}{{\mathbf{w}}}
\nc{\bA}{{\mathbf{A}}}
\nc{\bK}{{\mathbf{K}}}
\nc{\bB}{{\mathbf{B}}}
\nc{\bC}{{\mathbf{C}}}
\nc{\bG}{{\mathbf{G}}}
\nc{\bD}{{\mathbf{D}}}
\nc{\bH}{{\mathbf{H}}}
\nc{\bM}{{\mathbf{M}}}
\nc{\bN}{{\mathbf{N}}}
\nc{\bV}{{\mathbf{V}}}
\nc{\bW}{{\mathbf{W}}}
\nc{\bX}{{\mathbf{X}}}
\nc{\bZ}{{\mathbf{Z}}}
\nc{\bS}{{\mathbf{S}}}
\nc{\sA}{{\mathsf{A}}}
\nc{\sB}{{\mathsf{B}}}
\nc{\sC}{{\mathsf{C}}}
\nc{\sD}{{\mathsf{D}}}
\nc{\sG}{{\mathsf{G}}}
\nc{\sF}{{\mathsf{F}}}
\nc{\sK}{{\mathsf{K}}}
\nc{\sM}{{\mathsf{M}}}
\nc{\sO}{{\mathsf{O}}}
\nc{\sW}{{\mathsf{W}}}
\nc{\sQ}{{\mathsf{Q}}}
\nc{\sP}{{\mathsf{P}}}
\nc{\sZ}{{\mathsf{Z}}}
\nc{\sfp}{{\mathsf{p}}}
\nc{\sfq}{{\mathsf{q}}}
\nc{\sfr}{{\mathsf{r}}}
\nc{\osfr}{\overset{\circ}{\mathsf{r}}}
\nc{\sr}{{\mathsf{r}}}
\nc{\sfi}{{\mathsf{i}}}
\nc{\sfj}{{\mathsf{j}}}
\nc{\sk}{{\mathsf{k}}}
\nc{\sg}{{\mathsf{g}}}
\nc{\sff}{{\mathsf{f}}}
\nc{\sfb}{{\mathsf{b}}}
\nc{\sfc}{{\mathsf{c}}}
\nc{\sd}{{\mathsf{d}}}
\nc{\BK}{{\bar{K}}}
\nc{\tA}{{\widetilde{\mathbf{A}}}}
\nc{\tB}{{\widetilde{\mathcal{B}}}}
\nc{\tg}{{\widetilde{\mathfrak{g}}}}
\nc{\tG}{{\widetilde{G}}}
\nc{\TM}{{\widetilde{\mathbb{M}}}{}}
\nc{\tO}{{\widetilde{\mathsf{O}}}{}}
\nc{\tU}{{\widetilde{\mathfrak{U}}}{}}
\nc{\TZ}{{\tilde{Z}}}
\nc{\tx}{{\tilde{x}}}
\nc{\tbv}{{\tilde{\bv}}}
\nc{\tfP}{{\widetilde{\mathfrak{P}}}{}}
\nc{\tz}{{\tilde{\zeta}}}
\nc{\tmu}{{\tilde{\mu}}}
\nc{\urho}{\underline{\rho}}
\nc{\uB}{\underline{B}}
\nc{\uC}{{\underline{\mathbb{C}}}}
\nc{\ui}{\underline{i}}
\nc{\uj}{\underline{j}}
\nc{\ofP}{{\overline{\mathfrak{P}}}}
\nc{\oB}{{\overline{\mathcal{B}}}}
\nc{\og}{{\overline{\mathfrak{g}}}}
\nc{\oI}{{\overline{I}}}
\nc{\eps}{\varepsilon}
\nc{\hrho}{{\hat{\rho}}}
\nc{\one}{{\mathbf{1}}}
\nc{\two}{{\mathbf{t}}}
\nc{\Rep}{{\mathop{\operatorname{\rm Rep}}}}
\nc{\Tot}{{\mathop{\operatorname{\rm Tot}}}}
\nc{\Ker}{{\mathop{\operatorname{\rm Ker}}}}
\nc{\Coker}{{\mathop{\operatorname{\rm Coker}}}}
\nc{\im}{{\mathop{\operatorname{\rm Im}}}}
\nc{\Hilb}{{\mathop{\operatorname{\rm Hilb}}}}
\nc{\End}{{\mathop{\operatorname{\rm End}}}}
\nc{\Ext}{{\mathop{\operatorname{\rm Ext}}}}
\nc{\CHom}{{\mathop{\operatorname{{\mathcal{H}}\it om}}}}
\nc{\GL}{{\mathop{\operatorname{\rm GL}}}}
\nc{\gr}{{\mathop{\operatorname{\rm gr}}}}
\nc{\Id}{{\mathop{\operatorname{\rm Id}}}}
\nc{\de}{{\mathop{\operatorname{\rm def}}}}
\nc{\length}{{\mathop{\operatorname{\rm length}}}}
\nc{\supp}{{\mathop{\operatorname{\rm supp}}}}
\nc{\Cliff}{{\mathsf{Cliff}}}
\nc{\Fl}{\on{Fl}}
\nc{\Fib}{{\mathsf{Fib}}}
\nc{\Coh}{{\mathsf{Coh}}}
\nc{\FCoh}{{\mathsf{FCoh}}}
\nc{\reg}{{\text{\rm reg}}}
\nc{\cplus}{{\mathbf{C}_+}}
\nc{\cminus}{{\mathbf{C}_-}}
\nc{\cthree}{{\mathbf{C}_\bullet}}
\nc{\Qbar}{{\bar{Q}}}
\nc\Eis{\on{Eis}}
\nc\Eisb{\ol\Eis{}}
\nc\wh{\widehat}
\nc{\Def}{\on{Def_{\check{\fb}}(E)}}
\nc{\barZ}{\overline{Z}{}}
\nc{\barbarZ}{\overline{\barZ}{}}
\nc{\barpi}{\overline\pi}
\nc{\barbarpi}{\overline\barpi}
\nc{\barpip}{\overline\pi{}^+}
\nc{\barpim}{\overline\pi{}^-}
\nc{\fq}{\mathfrak q}
\nc{\sfqb}{\ol{\sfq}{}}
\nc{\sfpb}{\ol{\sfp}{}}
\nc{\hattimes}{\wh\otimes}
\nc{\bh}{{\bar{h}}}
\nc{\bOmega}{{\overline{\Omega(\check \fn)}}}
\nc{\seq}[1]{\stackrel{#1}{\sim}}
\nc{\cT}{{\check{T}}}
\nc{\cG}{{\check{G}}}
\nc{\cM}{{\check{M}}}
\nc{\cB}{{\check{B}}}
\nc{\ct}{{\check{\mathfrak t}}}
\nc{\cg}{{\check{\fg}}}
\nc{\cb}{{\check{\fb}}}
\nc{\cn}{{\check{\fn}}}
\nc{\cLambda}{{\check\Lambda}}
\nc{\cla}{{\check\lambda}}
\nc{\cmu}{{\check\mu}}
\nc{\cnu}{{\check\nu}}
\nc{\ceta}{{\check\eta}}
\nc{\DefbE}{{\on{Def}_{\cB}(E_\cT)}}
\nc{\imathb}{{\ol{\imath}}}
\nc{\Dmod}{\on{D-mod}}
\nc{\Maps}{\on{Maps}}
\nc{\MMaps}{\on{\mathbf{Maps}}}
\nc{\GMaps}{{\MMaps^{\BG_m}}}
\nc{\gMaps}{{\Maps^{\BG_m}}}
\nc{\Vect}{\on{Vect}}
\nc{\CMaps}{\mathcal Maps}
\nc{\sotimes}{\overset{!}\otimes}
\nc{\dr}{\on{dR}}
\nc{\oBX}{\overset{\circ}\BX}
\nc{\red}{\on{red}}
\begin{document}

\title{On a theorem of Braden}

\author{V.~Drinfeld and D.~Gaitsgory}

\dedicatory{Dedicated to E.~Dynkin} 

\date{\today}

\begin{abstract}
We give a new proof of Braden's theorem (\cite{Br}) about \emph{hyperbolic restrictions}
of constructible sheaves/D-modules. The main geometric ingredient in the proof is
a 1-parameter family that degenerates a given scheme $Z$ equipped with a $\BG_m$-action
to the product of the attractor and repeller loci.
\end{abstract}

\maketitle

\tableofcontents

\section*{Introduction}

\ssec{The setting for Braden's theorem}

\sssec{}
Given a scheme $Z$ (or algebraic space) of finite type over a field $k$ of characteristic 0, 
let $\Dmod(Z)$ denote the DG category of D-modules on it. 

\medskip

If $f:Z_1\to Z_2$ is a morphism of such schemes one has the de Rham direct image functor 
$f_{\bullet}:\Dmod(Z_1)\to \Dmod(Z_2)$ and the $!$-pullback functor 
$f^!:\Dmod(Z_2)\to \Dmod(Z_1)$.
One also has the \emph{partially} defined functor $f^\bullet:\Dmod(Z_2)\to \Dmod(Z_1)$, left adjoint to 
$f_{\bullet}\,$.

\sssec{}

Suppose now that $Z$ is equipped with an action of the group $\BG_m\,$. Let $Z^+$ (resp., $Z^-$) denote the 
corresponding attractor (resp., repeller) locus, see Sects~\ref{ss:attr} and \ref{ss:repeller} 
for the definitions.  Let $Z^0$ denote the locus of $\BG_m$-fixed points. 

\medskip

Consider the diagram
\begin{equation} \label{e:square with arrow prev}
\xy
%(0,0)*+{Z^+\underset{Z}\times Z^-}="X";
(30,0)*+{Z^+}="Y";
(0,-30)*+{Z^-}="Z";
(30,-30)*+{Z.}="W";
(-5,5)*+{Z^0}="U";
{\ar@{->}_{p^-} "Z";"W"};
{\ar@{->}^{p^+} "Y";"W"};
{\ar@{->}^{i^-} "U";"Z"};
{\ar@{->}_{i^+} "U";"Y"};
%{\ar@<-1.3ex>_{q^+} "Y";"U"};
%{\ar@<1.3ex>^{q^-} "Z";"U"};
\endxy
\end{equation}
%where 
%$$q^+:Z^+\to Z^0 \text{ and } q^-:Z^-\to Z^0$$
%are the corresponding contraction maps. 

\medskip

Let $\Dmod(Z)^{\BG_m\on{-mon}}\subset \Dmod(Z)$ be the full subcategory consisting of $\BG_m$-monodromic\footnote{The definition of $\BG_m$-monodromic object is recalled in Subsect.~\ref{sss:mon}.} objects. 
In the context of D-modules, Braden's theorem \cite{Br} 
(inspired by a result\footnote{In \cite{GM} M.~Goresky and R.~MacPherson  work in a purely 
topological setting. They work with correspondences rather than torus actions. According to \cite[Prop. 9.2]{GM}, 
under a certain condition (which is satisfied if the correspondence comes from a $\BG_m$-action) one has 
$A_4^\bullet\iso A_5^\bullet\,$, where  $A_i^\bullet$ is defined in  \cite[Prop. 4.5]{GM}. 
This is the proptotype of Braden's theorem.} from \cite{GM}) says that 
%(under a certain technical assumption on the $\BG_m$-action on $Z$) 
the composed functors
$$(i^+)^\bullet\circ (p^+)^! \text{ and } (i^-)^!\circ (p^-)^\bullet, \quad \Dmod(Z)\to \Dmod(Z^0)$$
are both defined \footnote{The issue here is that in the context of D-modules
the $\bullet$-pullback functor is only partially defined.}
on objects of $\Dmod(Z)^{\BG_m\on{-mon}}$  
and we have a canonical isomorphism
\begin{equation} \label{e:Braden preview}
(i^+)^\bullet\circ (p^+)^!|_{\Dmod(Z)^{\BG_m\on{-mon}}}  \simeq (i^-)^!\circ (p^-)^\bullet|_{\Dmod(Z)^{\BG_m\on{-mon}}}\;\; .
\end{equation} 

\sssec{}

In his paper \cite{Br}, T.~Braden formulated and proved his theorem assuming that $Z$ is a normal algebraic variety.
Although his formulation is enough for practical purposes, we prefer to formulate and prove this theorem for 
\emph{algebraic spaces} of finite type over a field (without any normality or separateness conditions).  

\medskip

In this more general context, the representability of the functors defining
the attractor $Z^+$ (and other related spaces such as $\wt{Z}$ from \secref{sss:behind} below) 
is no longer obvious; it is established in \cite{Dr}. 

\ssec{Why should we care?}

Braden's theorem is hugely important in geometric representation theory. 

\sssec{}

Here is a typical application
in the context of Lusztig's theory of induction and restriction of character sheaves.

\medskip

Take $Z=G$, a connected reductive group. Let $P\subset G$ be a parabolic, and let $P^-$ be an opposite parabolic,
so that $M:=P\cap P^-$ identifies with the Levi quotient of both $P$ and $P^-$. Denote the corresponding
closed embeddings by
$$M\overset{i^+}\hookrightarrow P\overset{p^+}\hookrightarrow G \text{ and }
M\overset{i^-}\hookrightarrow P^-\overset{p^-}\hookrightarrow G.$$

\medskip

Then the claim is that we have a canonical isomorphism of functors $$\Dmod(G)^{\on{Ad}_G\on{-mon}}\to \Dmod(M)^{\on{Ad}_M\on{-mon}}$$
between the corresponding categories of $\on{Ad}$-monodromic D-modules:
\begin{equation} \label{e:restr}
(i^+)^\bullet\circ (p^+)^!\simeq (i^-)^!\circ (p^-)^\bullet.
\end{equation}

\medskip

The proof is immediate from \eqref{e:Braden preview}: the corresponding $\BG_m$-action is the adjoint action
corresponding to a co-character $\BG_m\to M$, which maps to the center of $M$ and is dominant regular with
respect to $P$. \footnote{Unfortunately, it seems that this particular proof of the isomorphism 
\eqref{e:restr}, although very simple and well-known in the folklore, does not appear in the published literature.}

\sssec{}

For other applications of Braden's theorem see 
\cite{Ach},  \cite{AC},  \cite{AM},   \cite{Bi-Br},   \cite{GH},   \cite{Ly1},   \cite{Ly2},   \cite{MV},   \cite{Nak}.

\ssec{The new proof}

The goal of this paper is to give an alternative proof of Braden's theorem. The reason for our decision
to publish it is that 

\smallskip

\noindent(a) the new proof gives another point of view on ``what Braden's theorem is really about";

\smallskip

\noindent(b) a slight modification of the new proof of Braden's theorem allows to prove a new result in the geometric theory of automorphic forms, see \cite[Thm.~1.2.5]{DrGa3}.

\medskip

Let us explain the idea of the new proof. 

\sssec{Braden's theorem as an adjunction}
%\medskip

%The idea of the new proof is the following. 
Let us complete the diagram \eqref{e:square with arrow prev} to 
\begin{equation} \label{e:square with arrow prev enh}
\xy
%(0,0)*+{Z^+\underset{Z}\times Z^-}="X";
(30,0)*+{Z^+}="Y";
(0,-30)*+{Z^-}="Z";
(30,-30)*+{Z,}="W";
(-5,5)*+{Z^0}="U";
{\ar@{->}_{p^-} "Z";"W"};
{\ar@{->}^{p^+} "Y";"W"};
{\ar@{->}^{i^-} "U";"Z"};
{\ar@{->}_{i^+} "U";"Y"};
{\ar@<-1.3ex>_{q^+} "Y";"U"};
{\ar@<1.3ex>^{q^-} "Z";"U"};
\endxy
\end{equation}
where 
$$q^+:Z^+\to Z^0 \text{ and } q^-:Z^-\to Z^0$$
are the corresponding contraction maps. 

\medskip

First, we observe that the functors $(i^+)^\bullet$ and $(i^-)^!$, when restricted to the corresponding monodromic categories,
are isomorphic to $(q^+)_\bullet$ and $(q^-)_!$, respectively. Hence, 
the isomorphism \eqref{e:Braden preview} can be 
rewritten as
\begin{equation} \label{e:Braden preview q}
(q^+)_\bullet\circ (p^+)^!|_{\Dmod(Z)^{\BG_m\on{-mon}}}  \simeq (q^-)_!\circ (p^-)^\bullet|_{\Dmod(Z)^{\BG_m\on{-mon}}}.
\end{equation} 

Next we observe that the functor $(q^-)_!\circ (p^-)^\bullet$ is the left adjoint functor of 
$(p^-)_\bullet\circ (q^-)^!$. Hence,
\emph{the isomorphism \eqref{e:Braden preview q} can be restated as the assertion that the functors
$$(q^+)_\bullet\circ (p^+)^!|_{\Dmod(Z)^{\BG_m\on{-mon}}} \text{ and } (p^-)_\bullet\circ (q^-)^!|_{\Dmod(Z)^{\BG_m\on{-mon}}}$$
form an adjoint pair.}

\sssec{The geometry behind the adjunction}   \label{sss:behind}
In turns out that the co-unit for this adjunction, i.e., the map
\begin{equation}   \label{e:counit preview}
(q^+)_\bullet\circ (p^+)^!\circ (p^-)_\bullet\circ (q^-)^!\to \on{Id}_{\Dmod(Z^0)}\, ,
\end{equation}
is easy to write down (just as in the original form of Braden's theorem, a map in
one direction is obvious). 

\medskip

The crux of the new proof consists of writing down the unit for the adjunction, i.e., the corresponding map
\begin{equation} \label{e:unit preview}
\on{Id}_{\Dmod(Z)^{\BG_m\on{-mon}}}\to (p^-)_\bullet\circ (q^-)^! \circ (q^+)_\bullet\circ (p^+)^!|_{\Dmod(Z)^{\BG_m\on{-mon}}}\, .
\end{equation}

\medskip

The map \eqref{e:unit preview} comes from a certain geometric construction described in \secref{s:deg}.
Namely, we construct a $1$-parameter ``family" 
\footnote{The quotation marks are due to the fact that this 
``family" is not flat, in general. If $Z$ is affine  then each $\wt{Z}_t$ is a closed subscheme of $Z\times Z$. 
If $Z$ is separated, then for each $t$ the map 
$\wt{Z}_t\to Z\times Z$ is a monomorphism (but not necesaarily a locally closed embedding).} 
of schemes (resp., algebraic spaces) $\wt{Z}_t$ mapping to
$Z\times Z$ (here $t\in \BA^1$) such that for $t\ne 0$ the scheme (resp., algebraic space) $\wt{Z}_t$ is the graph of the map $t:Z\iso Z$,
and $\wt{Z}_0$ is isomorphic to $Z^+\underset{Z^0}\times Z^-$.

\ssec{Other sheaf-theoretic contexts}  \label{ss:other}

This paper is written in the context of D-modules on schemes (or more generally, 
algebraic spaces of finite type) over a field $k$ of characteristic 0.

\medskip

However, Braden's theorem can be stated in other sheaf-theoretic contexts, where the role of
the DG category $\Dmod(Z)$ is played by a certain triangulated category $D(Z)$.
The two other contexts that we have in mind are as follows:

\smallskip

\noindent(i) $k$ is any field, and $D(Z)$ is the derived category of ${\mathbb Q}_\ell$-sheaves with constructible cohomologies, 

\smallskip

\noindent(ii) $k=\BC$, and $D(Z)$ is the derived category of sheaves of $R$-modules
with constructible cohomologies (where $R$ is any ring). 

\medskip

In these two contexts the new proof of Braden's theorem presented in this article goes through with the following modifications:

\medskip

First, the functors $f^\bullet$ and $f_!$ are always defined, so one should not worry about pro-categories.\footnote{Pro-categories are considered in Appendix~\ref{s:pro}.} 

\medskip

Second, the definition of the $G$-monodromic category $D(Z)^{G\on{-mon}}$ (where $G$ is any algebraic group, e.g., the group $\BG_m$) should be slightly different from the definition of $\Dmod(Z)^{G\on{-mon}}$
given in \secref{sss:mon}.

\medskip

Namely, $D(Z)^{G\on{-mon}}$ should be defined as the full subcategory of $D(Z)$ \emph{strongly generated} by the essential 
image of the pullback functor $D(Z/G)\to D(Z)$ (i.e., its objects are those
objects of $D(Z)$ that can be obtained from objects lying in the image of the above pullback
functor by a \emph{finite} iteration of the procedure of taking the cone of a morphism). 

\ssec{Some conventions and notation}

\sssec{}

In Sects. \ref{s:actions} and \ref{s:deg}
we will work over an arbitrary ground field $k$, and in Sects. \ref{s:Braden1}-\ref{s:Verifying}
we will assume that $k$ has characteristic $0$ (because we will be working with D-modules). 

\sssec{}
In this article all schemes, algebraic spaces, and stacks are assumed to be ``classical" (as opposed to derived).

%The first piece of good news is that in this paper we do not need derived algebraic geometry, 
%i.e., we will be working with classical schemes throughout. 

\sssec{}
When working with D-modules, our conventions follow those of \cite[Sects. 5 and 6]{DrGa1}. The 
only notational difference is that for a morphism $f:Z_1\to Z_2$, we will denote the
direct image functor $\Dmod(Z_1)\to \Dmod(Z_2)$ by $f_\bullet$ (instead of $f_{\dr,*}$),
and similarly for the left adjoint, $f^\bullet$ (instead of $f^*_{\dr}$).

\sssec{}
Given an an algebraic space (or stack) $Z$  of finite type over a field $k$ of characteristic 0, we
let $\Dmod(Z)$ denote the DG category of D-modules on it. 
Our conventions regarding DG categories follow those of \cite[Sect. 0.6]{DrGa1}. 
%In particular, $\Vect$ denotes the DG category of chain complexes of $k$-vector spaces.

\medskip

On the other hand, the reader may prefer to replace each time the DG category $\Dmod(Z)$ by its
homotopy category, which is a triangulated category.\footnote{However, the most natural approach to 
\emph{constructing} the triangulated category of D-modules on an algebraic stack is to construct the corresponding DG category first, as is done in \cite{DrGa1}.} Then the formulations and proofs of the main results of this article will remain valid. Moreover, once we  know that the morphism~\eqref{e:counit preview} in the triangulated setting is the co-unit of an adjunction, it follows that the same is true in the DG setting.

%However, another piece of good news is that we do not actually need DG categories in order to
%formulate and prove the main results of this paper. Namely, instead of the DG category $\Dmod(Z)$ 
%of D-modules on an algebraic space (or stack) $Z$ we can work with the underlying homotopy category, 
%which is a triangulated category. \footnote{Note, however, that the very construction of the (triangulated) 
%category of D-modules on an algebraic stack is non-trivial. One needs to either first construct the 
%corresponding DG category (in which case one does need higher category theory), or use some 
%\emph{ad hoc} method. 

\medskip

\sssec{}

In Appendix \ref{s:pro} we define the notion of pro-completion $\on{Pro}(\bC)$ of a DG category $\bC$. 

\medskip

The reader who prefers to 
stay in the triangulated world, can replace it by the category
of all covariant triangulated functors from the homotopy category 
$\on{Ho}(\bC)$ to the homotopy category of complexes of $k$-vector spaces. (Note that the category of such functors is not
necessarily triangulated, but this is of no consequence for us.)

\ssec{Organization of the paper}

\sssec{}

Sects. \ref{s:actions}-\ref{s:deg} are devoted to the geometry of $\BG_m$-actions on algebraic spaces. 

\medskip

Let $Z$ be an algebraic space of finite type over the ground field $k$, equipped with a $\BG_m$-action.
In \secref{s:actions} we define the attractor $Z^+$ and the repeller $Z^-$ by
\begin{equation}  \label{e:attr&repel}
Z^+:=\GMaps(\BA^1,Z), \quad\quad Z^-:=\GMaps(\BA^1_-\, ,Z),
\end{equation}
where $\GMaps$ stands for the space of $\BG_m$-equivariant maps and $\BA^1_-:=\BP^1-\{\infty\}$ (or equivalently, $\BA^1_-$ 
is the affine line equipped with the $\BG_m$-action opposite to the usual one).
The basic facts on $Z^\pm$ are formulated in \secref{s:actions}; the proofs of the more difficult statements are given in~\cite{Dr}. 

\medskip

As was already mentioned in ~\secref{sss:behind}, in the proof of Braden's theorem we use a certain 1-parameter family of 
algebraic spaces $\wt{Z}_t$, $t\in\BA^1$. These spaces are defined and studied in 
\secref{s:deg}. The definition is formally similar to \eqref{e:attr&repel}: namely,
\[
\wt{Z}_t:=\GMaps (\BX_t\,, Z),
\]
where $\BX_t$ is the hyperbola $\tau_1\cdot \tau_2=t$ and the action of $\lambda\in\BG_m$ on 
$\BX_t$ is defined by 
$$\tilde\tau_1=\lambda\cdot \tau_1\, , \quad\quad\tilde\tau_2=\lambda^{-1}\cdot\tau_2\, .$$
Note that $\BX_0$ is the union of the two coordinate axes, which meet at the origin; accordingly, 
$\wt{Z}_0$ identifies with $Z^+\underset{Z^0}\times Z^-$ (as promised in \secref{sss:behind}).

\sssec{}

In \secref{s:Braden1} we first state Braden's theorem in its original formulation, and then reformulate
it as a statement that certain two functors are adjoint (with the specified co-unit of the adjunction). 

\medskip

In \secref{s:unit} we carry out the main step in the proof of \thmref{t:Braden adj} by constructing the
unit morphism for the adjunction. 

\medskip

The geometric input in the construction of the unit is the family 
$t\rightsquigarrow \wt{Z}_t$ mentioned above. The input from the theory of D-modules is the
\emph{specialization map}
$$\on{Sp}_\CK:\CK_1\to \CK_0\, ,$$
where $\CK$ is a $\BG_m$-monodromic object in $\Dmod(\BA^1\times \CY)$  (for any algebraic space/stack $\CY$), 
and where $\CK_1$ and $\CK_0$ are the !-restrictions of $\CK$ to $\{1\}\times \CY$ and $\{0\}\times \CY$,
respectively. The map $\on{Sp}_\CK$ is a simplified version of the specialization map that goes from nearby
cycles to the !-fiber.

\medskip

In \secref{s:Verifying} we show that the unit and co-unit indeed satisfy the adjunction property. 

\medskip

In Appendix \ref{s:pro} we define the notion of pro-completion $\on{Pro}(\bC)$ of a DG category $\bC$.

\ssec{Acknowledgements}

We thank A.~Beilinson,  T.~Braden, J.~Konarski, and A.~J.~Sommese for helpful discussions. 

\medskip 

The research of V. D. is partially supported by NSF grants DMS-1001660 and DMS-1303100. 
The research of D. G. is partially supported by NSF grant DMS-1063470. 

\section{Geometry of $\BG_m$-actions: fixed points, attractors, and repellers}  \label{s:actions}

In this section we review the theory of action of the multiplicative group $\BG_m$ on a scheme or algebraic space $Z$. 
Specifically, we are concerned with the fixed-point locus, denoted by $Z^0$, as well as the attractor/repeller spaces, 
denoted by $Z^+$ and $Z^-$, respectively.

\medskip

The main results of this section are \propref{p:Z^0closed} (which says that the fixed-point locus is closed), 
Theorem~\ref{t:attractors} (which ensures representability of attractor/repeller sets), 
and \propref{p:Cartesian} (the latter is used in the construction of the unit of the adjunction given in 
\secref{sss:defining co-unit}). 

\medskip

In the case of a scheme equipped with a locally linear $\BG_m$-action these 
results are well known (in a slightly different language).

\ssec{$k$-spaces} \label{ss:k spaces}

\sssec{}

We fix a field $k$ (of any characteristic).
By a $k$-\emph{space} (or simply  \emph{space}) we mean a contravariant functor $Z$ from the category of 
affine schemes to that of sets which is a sheaf for the fpqc topology. Instead of $Z(\Spec(R))$ we write simply 
$Z(R)$; in other words, we consider $Z$ as a covariant functor on the category of $k$-algebras.

\medskip

Note that for any scheme $S$ we have $Z(S)=\Maps (S,Z)$, where $\Maps$ stands for the set of morphisms between spaces. 
Usually we prefer to write $\Maps (S,Z)$ rather than $Z(S)$.

\medskip

We write $\on{pt}:=\Spec(k)$. 

\sssec{}

General spaces will appear only as ``intermediate" objects.
For us, the really geometric objects are \emph{algebraic spaces} over $k$. We will be using the definition 
of algebraic space from \cite{LM} (which goes back to  M.~Artin).
\footnote{In particular, quasi-separatedness is included into the definition of algebraic space. Thus
the quotient $\BA^1/\BZ$ (where the discrete group $\BZ$ acts by translations)
is \emph{not} an algebraic space.}

\medskip

Any quasi-separated scheme (in particular, any scheme of finite type) is an algebraic space.
The reader may prefer to restrict his attention to schemes, and even to separated schemes,
as this will cover most of the cases of interest to which the main the result of this paper, i.e.,
\thmref{t:braden original}, is applied.

\medskip

Note that in the definition of spaces, instead of considering affine schemes as ``test schemes", one can consider 
algebraic spaces (any fpqc sheaf on the category of affine  schemes uniquely extends to an fpqc sheaf on the category of 
algebraic spaces).

\sssec{}

A morphism of spaces $f:Z_1\to Z_2$ is said to be a \emph{monomorphism} if 
the corresponding map 
$$\Maps (S,Z_1)\to\Maps (S,Z_2)$$
is injective for any scheme $S$. In particular, this applies if $Z_1$ and $Z_2$ are algebraic spaces.
It is known that a morphism \emph{of finite type} between schemes (or algebraic spaces) is a monomorphism 
if and only if each of its geometric fibers is a reduced scheme with at most one point.

\medskip

A morphism of algebraic spaces is said to be \emph{unramified} if it has locally finite presentation and its geometric fibers are 
finite and reduced.

\ssec{The space of $\BG_m$-equivariant maps} \label{ss:GMaps}

\sssec{}

Let $Z_1$ and $Z_2$ be spaces. We define the space $\MMaps(Z_1,Z_2)$ by
$$\Maps(S,\MMaps(Z_1,Z_2)):=\Maps(S\times Z_1,Z_2)$$
(the right-hand side is easily seen to be an fpqc sheaf with respect to $S$). 

\sssec{}

Let $Z_1,Z_2$ be spaces equipped with an action of $\BG_m$. Then we define the space
$\GMaps(Z_1,Z_2)$ as follows: for any scheme $S$,
\begin{equation}
\Maps (S,\GMaps(Z_1,Z_2)):=\Maps (S\times Z_1,Z_2)^{\BG_m}
\end{equation}
(the right-hand side is again easily seen to be an fpqc sheaf with respect to $S$). 

\medskip

The action of $\BG_m$ on $Z_2$ induces a $\BG_m$-action on 
$\GMaps(Z_1,Z_2)$.

\sssec{}

Note that even if $Z_1$ and $Z_2$ are schemes, the space $\GMaps(Z_1,Z_2)$ does not have to be a scheme 
(or an algebraic space), in general. 

\ssec{The space of fixed points} \label{ss:fixed_points}

\sssec{}

Let $Z$ be a space equipped with an action of $\BG_m$. Then we set
\begin{equation}
Z^0:=\GMaps(\on{pt},Z).
\end{equation}

Note that $Z^0$ is a subspace of $Z$ because $\Maps (S,Z^0)=\Maps (S,Z)^{\BG_m}$ is a subset of
$\Maps (S,Z)$.

\begin{defn}
$Z^0$ is called \emph{the subspace of fixed points} of $Z$.
\end{defn}

\sssec{}

We have the following result:

\begin{prop}   \label{p:Z^0closed}
If $Z$ is an algebraic space (resp. scheme) of finite type then so is $Z^0$. Moreover, the morphism
$Z^0\to Z$ is a closed embedding.
\end{prop}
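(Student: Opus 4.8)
The plan is to first reduce to the affine case, then exhibit $Z^0$ as an explicit closed subscheme using the action map. For the reduction: since being a closed embedding is local on the target for the fpqc (even Zariski) topology, and $Z^0 \to Z$ is a base-changed version of itself over any open (because forming $\GMaps(\on{pt},-)$ commutes with open immersions, as a $\BG_m$-equivariant map out of a point is just a fixed point), it suffices to treat the case where $Z$ is an affine scheme of finite type over $k$. The algebraic-space case reduces to the scheme case by choosing an \'etale atlas $U \to Z$ by an affine scheme: one checks that $U^0 = U \times_Z Z^0$ (again because a fixed point of $Z$ lifting to $U$ is the same as a fixed point of $U$, using that $U \to Z$ is \'etale hence unramified, so the lift is unique), and that $U^0 \to U$ being a closed embedding implies $Z^0 \to Z$ is a closed embedding by \'etale descent of closed immersions.

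So let $Z = \Spec(A)$ with $A$ a finitely generated $k$-algebra. A $\BG_m$-action is a $\ZZ$-grading $A = \bigoplus_{n} A_n$. First I would observe that $Z^0$ is representable here: a $\BG_m$-equivariant map $\on{pt} \to Z$ over a base $S$ is a map $A \to \Gamma(S,\O_S)$ that kills all homogeneous elements of nonzero degree, i.e. factors through $A_0' := A / (\bigoplus_{n \neq 0} A_n)$ — wait, more precisely through the quotient by the ideal $I$ generated by $\bigoplus_{n\neq 0} A_n$. Hence $Z^0 = \Spec(A/I)$, which is a closed subscheme of $Z$, and it is of finite type since $A/I$ is a finitely generated $k$-algebra (it is a quotient of $A$). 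This simultaneously proves representability, finite type, and the closed-embedding assertion. One should double-check that $I$ is the correct ideal, i.e. that $\Maps(S, Z^0) = \Maps(S,Z)^{\BG_m}$ really equals $\Hom_{k\text{-alg}}(A/I, \Gamma(S,\O_S))$ for all affine $S = \Spec(R)$: a ring map $\phi: A \to R$ is $\BG_m$-equivariant (for the trivial action on $R$) iff $\phi(a) = 0$ for every $a \in A_n$, $n \neq 0$, which is exactly the condition that $\phi$ kills $I$. This is routine.

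The main subtlety — and the step I expect to require the most care — is the passage from schemes to algebraic spaces, specifically verifying that formation of $Z^0$ commutes with \'etale base change $U \to Z$ where $U$ is an affine scheme equipped with the pulled-back $\BG_m$-action (note: $\BG_m$ acts on $U$ because $U \to Z$ is the atlas and one can arrange $\BG_m$-equivariance of the atlas, or work with the groupoid presentation). The point is that a $\BG_m$-fixed $S$-point of $Z$ need not a priori lift $\BG_m$-equivariantly to $U$; one must use that $U \to Z$ is unramified (indeed \'etale) so that any lift of the underlying point is unique, hence automatically equivariant by uniqueness of the lift applied to the $\BG_m$-twisted lifts. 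Once $U^0 = U \times_Z Z^0$ is established, \'etale descent for the property ``closed immersion'' (which is \'etale-local on the target, \cite{LM}) finishes the argument. Alternatively, one can quote that this compatibility is exactly the representability statement proved in \cite{Dr}, which also covers the attractor and repeller; but the fixed-point case is elementary enough to spell out directly as above.
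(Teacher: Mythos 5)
Your affine computation is correct and is exactly the paper's Example \ref{ex:fixed-affine}: $Z^0=\Spec(A/I)$ with $I$ generated by the homogeneous elements of nonzero degree. The problem is the reduction to that case. Covering $Z$ by affine opens and base-changing only works if the opens can be taken $\BG_m$-\emph{stable}: for a non-stable open $U$ the space $U^0$ is undefined, and $Z^0\times_Z U$ is merely an open piece of $Z^0$ about which the affine computation says nothing. A cover by stable affine opens is precisely the ``locally linear'' hypothesis of Definition \ref{d:locally linear}, which the paper emphasizes is \emph{not} automatic: by Remark \ref{r:locally linear} one needs normality (Sumihiro's theorem), and Example \ref{e:P^1 glued} ($\BP^1$ with $0$ glued to $\infty$) gives a scheme of finite type whose fixed point has no $\BG_m$-stable affine neighbourhood at all (the only stable open containing the node is the whole, non-affine, curve). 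So your argument proves the proposition only in the locally linear case. The general case --- in particular the fact that $Z^0\subset Z$ is closed even when $Z$ is not separated, which the paper singles out as the one genuinely surprising point --- is exactly what the paper itself does not prove in the text but defers to \cite[Prop.~1.2.2]{Dr}.

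The \'etale-descent step has two further problems. First, the existence of a $\BG_m$-equivariant affine \'etale atlas of an algebraic space is itself a nontrivial theorem (it is essentially the main technical input of \cite{Dr}); you cannot simply ``arrange'' it. Second, your justification of $U^0=U\times_Z Z^0$ is wrong as stated: an \'etale map does not have unique lifts of points (it is a covering, not a monomorphism); what is unique is the lift along an infinitesimal thickening of an already chosen lift. The correct comparison is between the two lifts $\tilde f$ and $\lambda\cdot\tilde f$ of a fixed point $f$: their equalizer in $\BG_m\times S$ is \emph{open} (because $U\to Z$ is unramified) and contains $\{1\}\times S$, and one must still show it is all of $\BG_m\times S$ --- an open-versus-closed issue of the same nature as the one being proved, which again runs into separatedness. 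None of this is fatal in the locally linear or separated settings the paper actually uses, but as a proof of the proposition as stated the argument has a genuine gap at its main point.
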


The assertion of the proposition is nearly tautological if $Z$ is separated. This case will suffice for most of
the cases of interest to which the main result of this paper applies. 

\medskip

The proof in general is given in \cite[Prop. 1.2.2]{Dr}. It is not difficult; the only surprise is that $Z^0\subset Z$ is closed even if $Z$ is not separated. (Explanation in characteristic zero: $Z^0$ is the subspace of zeros of the vector field on $Z$ corresponding to the $\BG_m$-action.)

\begin{example}  \label{ex:fixed-affine}
Suppose that $Z$ is an affine scheme $\Spec(A)$. A $\BG_m$-action on $Z$ is the same as a 
$\BZ$-grading on $A$. Namely, the $n$-th component of $A$ consists of 
$f\in \Gamma(Z,\CO_Z)$ such that $f(\lambda\cdot z)=\lambda^n\cdot f(z)$.

\medskip
 
It is easy to see that $Z^0=\Spec(A^0)$, where $A^0$ is the maximal graded quotient 
algebra of $A$ concentrated in degree 0 (in other words, $A^0$ is the quotient of $A$ 
by the ideal generated by homogeneous elements of non-zero degree).
\end{example}

\ssec{Attractors} \label{ss:attr}

\sssec{} 

Let $Z$ be a space equipped with an action of $\BG_m$. Then we set
\begin{equation}  \label{e:attr}
Z^+:=\GMaps(\BA^1,Z),
\end{equation}
where $\BG_m$ acts on $\BA^1$ by dilations.

\begin{defn}
$Z^+$ is called the \emph{attractor} of $Z$.
\end{defn}

\sssec{Pieces of structures on $Z^+$} \label{sss:structures}  \hfill

\medskip

\noindent(i) $\BA^1$ is a monoid with respect to multiplication. The action of $\BA^1$ on itself induces an 
$\BA^1$-action on $Z^+$, which extends the $\BG_m$-action defined in \secref{ss:GMaps}.

\medskip

\noindent(ii) Restricting a morphism $\BA^1\times S\to Z$ to $\{1\}\times S$ one gets a morphism $S\to Z$. Thus we get a 
$\BG_m$-equivariant morphism $p^+:Z^+\to Z$. 

\medskip

Note that if $Z$ is \emph{separated} (i.e., the diagonal morphism $Z\to Z\times Z$ is a closed embedding),
then $p^+:Z^+\to Z$ is a \emph{mono}morphism. To see this, it suffices to interpret
$p^+$ as the composition
\[
\GMaps (\BA^1,Z)\to \GMaps (\BG_m,Z)=Z.
\]

Thus if $Z$ is separated then $p^+$ identifies $Z^+(S)$ with the subset of those points $f:S\to Z$ for which the map $S\times \BG_m\to Z$,
defined by  $(s,t)\mapsto t\cdot f(s)$, extends to
a map $S\times \BA^1\to Z$; informally, the limit
\begin{equation}   \label{e:limit}
\underset{t\to 0}{lim}\,\, t\cdot z
\end{equation}
should exist.

\medskip

\noindent(iii) Recall that $Z^0=\GMaps (\on{pt}, Z)$. 
The $\BG_m$-equivariant maps $0:\on{pt}\to\BA^1$ and $\BA^1\to \on{pt}$ induce the maps
$$q^+:Z^+\to Z^0 \text{ and } i^+:Z^0\to Z^+,$$ such that $q^+\circ i^+=\id_{Z^0}$, and the composition $p^+\circ i^+$ 
is equal to the canonical embedding $Z^0\mono Z$.

\medskip

Note that if $Z$ is separated then for $z\in Z^+(S)\subset Z(S)$ the point $q^+(S)$ is the limit \eqref{e:limit}.

\sssec{The case of a contracting action}  \label{sss:contracting}

Let $Z$ be a separated space. Then it is clear that
if a $\BG_m$-action on $Z$ can be extended to an action of the monoid $\BA^1$ then such an extension is unique. 
In this case we will say that that the $\BG_m$-action is \emph{contracting}.

\medskip

\begin{prop} \label{p:contracting}
Let $Z$ be a separated space of finite type equipped with a $\BG_m$-action. 
The morphism $p^+:Z^+\to Z$ is an isomorphism if and only if the $\BG_m$-action on $Z$ is
contracting. 
\end{prop}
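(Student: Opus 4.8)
The plan is to prove both implications of \propref{p:contracting}. The ``if'' direction is essentially contained in the discussion of \secref{sss:contracting}: if the $\BG_m$-action on $Z$ extends to an action of the monoid $\BA^1$, then for any scheme $S$ and any map $f\colon S\to Z$, the composite $S\times\BA^1\to S\times Z\to Z$ (using the monoid action) is a $\BG_m$-equivariant extension of $(s,t)\mapsto t\cdot f(s)$; since $Z$ is separated such an extension is unique, so this construction gives an inverse to the map $Z^+(S)\to Z(S)$ described in \secref{sss:structures}(ii), functorially in $S$. Hence $p^+\colon Z^+\to Z$ is an isomorphism. (One should check the construction is compatible with the fpqc-sheaf structure, but this is automatic from the functoriality.)

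For the ``only if'' direction, suppose $p^+\colon Z^+\to Z$ is an isomorphism. I would define the desired $\BA^1$-action on $Z$ as the composite
\[
\BA^1\times Z\;\iso\;\BA^1\times Z^+\;\xrightarrow{\ \text{act}\ }\;Z^+\;\xrightarrow{\ p^+\ }\;Z,
\]
where $\BA^1\times Z^+\to Z^+$ is the monoid action from \secref{sss:structures}(i). It remains to verify the two monoid-action axioms (associativity and the unit axiom) and that this action restricts to the given $\BG_m$-action on $Z$. The unit axiom and the compatibility with the $\BG_m$-action follow from the fact that $Z^+\to Z$ is $\BG_m$-equivariant and from \secref{sss:structures}(ii), which says restriction along $\{1\}\hookrightarrow\BA^1$ recovers $p^+$; combined with the monoid-action property on $Z^+$ (where $1\in\BA^1$ acts as the identity), this gives $1\cdot z=z$ and equivariance. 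For associativity, one reduces (using that $Z^+\iso Z$ and the $\BA^1$-action on $Z^+$ is genuinely associative, being defined via the monoid structure on $\BA^1$ acting on $\GMaps(\BA^1,Z)$) to checking that the two iterated maps $\BA^1\times\BA^1\times Z\to Z$ agree, which follows by transporting the associativity of multiplication on $\BA^1$ through the isomorphism.

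The main subtlety — and the point deserving genuine care rather than a one-line dismissal — is that this whole argument uses separatedness in an essential way through the word ``unique'' in \secref{sss:contracting}, and through the identification in \secref{sss:structures}(ii) of $Z^+(S)$ with a \emph{subset} of $Z(S)$. Without separatedness, $p^+$ need not be a monomorphism, the putative extension of a $\BG_m$-action to an $\BA^1$-action need not be unique, and the displayed formula above need not even be well-defined independently of choices. So I would state clearly at the outset that the proof rests on the separatedness hypothesis exactly at these two points, and otherwise the verification is a formal manipulation of the monoid structures involved; no input from \thmref{t:attractors} or the representability results is needed here, only the elementary description of $Z^\pm$ as spaces of equivariant maps and the pieces of structure listed in \secref{sss:structures}.
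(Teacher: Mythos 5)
Your argument is correct and is essentially the paper's own proof: for the ``only if'' direction you transport the $\BA^1$-action on $Z^+$ from \secref{sss:structures}(i) through the isomorphism $p^+$, and for the ``if'' direction you use the $\BA^1$-action to produce a section $g\colon Z\to Z^+$ of $p^+$ and conclude because $p^+$ is a monomorphism when $Z$ is separated (\secref{sss:structures}(ii)). The paper's version is just terser, leaving the monoid-axiom verifications implicit.
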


\begin{proof}

The ``only if" assertion follows from \secref{sss:structures}(i). For the ``if" assertion, we note that 
the $\BA^1$-action on $Z$ defines a morphism $g:Z\to Z^+$ such that the composition of the maps
\begin{equation}  \label{e:mono-epi}
Z\overset{g}\longrightarrow Z^+\overset{p^+}\longrightarrow Z
\end{equation}
equals $\id_Z$. Since the map $p^+$ is a monomorphism (see \secref{sss:structures}(ii)), the assertion follows.

\end{proof}

\begin{rem}  \label{r:contracting}
In \cite[Prop. 1.4.15]{Dr} it will be shown that if $Z$ is an \emph{algebraic} space of finite type, then 
the assertion of \propref{p:contracting} remains valid even if $Z$ is not separated: i.e., $p^+$
is an isomorphism if and only if the $\BG_m$-action on $Z$  can be extended to an $\BA^1$-action;
moreover, such an extension is unique.
\end{rem}

\sssec{The affine case}   \label{sss:attractors-affine}

Suppose that $Z$ is affine, i.e., $Z=\Spec(A)$, where $A$ is a $\BZ$-graded commutative algebra.
It is easy to see that in this case $Z^+$ is represented by the affine scheme $\Spec(A^+)$, where $A^+$ is the 
maximal $\BZ^{\geq 0}$-graded quotient algebra of $A$ (in other words, the quotient of $A$ by the ideal generated 
by by all homogeneous elements of $A$ of strictly negative degrees). 

\medskip

By Example~\ref{ex:fixed-affine}, $Z^0=\Spec(A^0)$, where $A^0$ is the maximal graded quotient algebra of 
$A$ (or equivalently, of $A^+$) concentrated in degree 0. Since the algebra $A^+$ is $\BZ^{\geq 0}$-graded, $A^0$ identifies with the
$0$-th graded component of $A^+$. Thus we obtain the homomorphisms $A^0\mono A^+\epi A^0$. They correspond to the morphisms 
$$Z^0\overset{\;\;q^+}\longleftarrow Z^+\overset{\;\;i^+}\longleftarrow Z^0\,.$$

\sssec{Attractors of open/closed subspaces}

We have:

\begin{lem}   \label{l:U^+}  \hfill
Let $Z$ be a space equipped with a $\BG_m$-action, and let $Y\subset Z$ 
be a $\BG_m$-stable open subspace. 

\smallskip

\noindent{\em(i)} Suppose that $Y\to Z$ is an open embedding. 
Then the subspace $Y^+\subset Z^+$ equals $(q^+)^{-1}(Y^0)$, where $q^+$ is the natural morphism $Z^+\to Z^0$.

\smallskip

\noindent{\em(ii)} Suppose that $Y\to Z$ is a closed embedding. 
Then the subspace $Y^+\subset Z^+$ equals $(p^+)^{-1}(Y)$, where $q^+$ is the natural morphism $Z^+\to Z$.

\end{lem}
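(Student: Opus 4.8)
The plan is to prove both statements by reducing to the affine case, using the representability results already established in \propref{p:Z^0closed} and Theorem~\ref{t:attractors}, together with the functorial descriptions of $Z^+$, $Z^0$, $q^+$, $p^+$ from \secref{sss:structures}. In fact, both (i) and (ii) can be checked directly on $S$-points by unwinding the definition $Z^+=\GMaps(\BA^1,Z)$. For (i), suppose $Y\subset Z$ is $\BG_m$-stable and open. An $S$-point of $Z^+$ is a $\BG_m$-equivariant map $f:\BA^1\times S\to Z$; it lies in $Y^+$ precisely when $f$ factors through $Y$. Because $Y$ is open in $Z$, the locus in $\BA^1\times S$ where $f$ lands in $Y$ is open and $\BG_m$-stable, hence (since $\BG_m$ acts on $\BA^1$ contracting everything to $0$, and the fibers of $\BA^1\times S\to S$ are connected) it is all of $\BA^1\times S$ as soon as it contains $\{0\}\times S$. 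But $f|_{\{0\}\times S}$ is exactly $q^+(f):S\to Z^0$, so $f$ factors through $Y$ iff $q^+(f)$ factors through $Y^0=Y\cap Z^0$. This is the identity $Y^+=(q^+)^{-1}(Y^0)$.

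For (ii), suppose $Y\subset Z$ is $\BG_m$-stable and closed. With $f:\BA^1\times S\to Z$ as above, the locus where $f$ lands in $Y$ is now closed and $\BG_m$-stable; being $\BG_m$-stable and closed in $\BA^1\times S$, it contains $\{0\}\times S$ as soon as it contains $\{1\}\times S$ (the closure of a $\BG_m$-orbit $\BG_m\times\{s\}$ in $\BA^1\times S$ contains $\{0\}\times\{s\}$). Conversely a closed $\BG_m$-stable subset meeting $\{1\}\times S$ — equivalently, a point of $(p^+)^{-1}(Y)$ — automatically contains the orbit through that point, hence its closure, hence all of $\BA^1\times S$ fiberwise. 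So $f$ factors through $Y$ iff $p^+(f)=f|_{\{1\}\times S}$ factors through $Y$, i.e. $Y^+=(p^+)^{-1}(Y)$. (One should also note $Y^+$ is indeed representable by Theorem~\ref{t:attractors}, so the displayed equalities are equalities of subspaces of the algebraic space $Z^+$.)

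The cleanest way to make the orbit-closure arguments rigorous without fussing over generalized points is to reduce to $Z$ affine. This is legitimate: the statements are local on $Z$ for the Zariski (even fpqc) topology on the target, one can cover $Z$ by $\BG_m$-stable affine opens when $Z$ is a scheme with a locally linear action, and in general one invokes the reduction steps used in \cite{Dr} to prove Theorem~\ref{t:attractors}. In the affine case $Z=\Spec(A)$ with $A$ a $\BZ$-graded algebra, by \secref{sss:attractors-affine} we have $Z^+=\Spec(A^+)$ with $A^+=A/(A_{<0})$ and $Z^0=\Spec(A^0)=\Spec(A^+_0)$, and $q^+$, $p^+$ correspond to $A^0\mono A^+$ and $A\epi A^+$. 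A closed $\BG_m$-stable $Y=\Spec(A/I)$ has $I$ a graded ideal; then $Y^+=\Spec((A/I)^+)=\Spec(A/(I+A_{<0}))$, which is exactly the scheme-theoretic preimage $(p^+)^{-1}(Y)$ since $A^+\otimes_A A/I=A/(I+A_{<0})$. Similarly an open $\BG_m$-stable $Y$ is covered by basic opens $\Spec(A[f^{-1}])$ for $f$ homogeneous; chasing through the formula $A[f^{-1}]^+=A^+[\bar f^{-1}]$ (valid when $\deg f\ge 0$, and for $\deg f<0$ the basic open has empty attractor and empty intersection with $Z^0$) yields $Y^+=(q^+)^{-1}(Y^0)$.

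The main obstacle I anticipate is purely bookkeeping in part (i): the fact that $Y^+=(q^+)^{-1}(Y^0)$ rather than $(p^+)^{-1}(Y)$ is exactly the phenomenon that an orbit can "escape" from an open set in finite time but its \emph{limit} cannot, so one must argue carefully that membership in $Y$ is detected only at the fixed point $t=0$, not at $t=1$. Making this precise in the non-affine, non-separated setting — where $p^+$ need not even be a monomorphism — is where one leans on the reduction to the affine (or locally linear) case and on the connectedness of $\BA^1$; everything else is a routine unwinding of the definition of $\GMaps(\BA^1,-)$.
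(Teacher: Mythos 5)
Your first two paragraphs are exactly the paper's proof: for an $S$-point $f:S\times\BA^1\to Z$ of $Z^+$, the subspace $f^{-1}(Y)\subset S\times\BA^1$ is open (resp.\ closed) and $\BG_m$-stable, hence equals all of $S\times\BA^1$ as soon as it contains $\{0\}\times S$ (resp.\ contains $S\times\BG_m$, which is schematically dense). The affine reduction you propose in the third paragraph should be dropped: it is not needed (the $S$-point argument is already rigorous for an arbitrary space $Z$, where affine $\BG_m$-stable covers need not exist), and it would be circular in the paper's logical order, since \lemref{l:U^+}(i) is itself the input that reduces the proof of \thmref{t:attractors} to the affine case.
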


\begin{proof}

Let $Y\to Z$ be an open embedding. For any test scheme $S$, we have to show that if
$$f:S\times \BA^1\to Z$$
is a $\BG_m$-equivariant morphism such that $\{0\}\times S\subset f^{-1}(Y)$ then 
$f^{-1}(Y)=S\times \BA^1$. This is clear because $f^{-1}(Y)\subset S\times \BA^1$ is open and $\BG_m$-stable.

\medskip

Let $Y\to Z$ be a closed embedding. An $S$-point of $(p^+)^{-1}(Y)$ is a  $\BG_m$-equivariant morphism 
$f:S\times \BA^1\to Y$ such that
$S\times \BG_m\subset f^{-1}(Y)$. Since $f^{-1}(Y)$ is closed in $S\times \BA^1$ this implies that
$f^{-1}(Y)=S\times \BA^1$, i.e., $f(S\times \BA^1)\subset Y$.

\end{proof}

\ssec{Representability of attractors}  \label{ss:Results_attractors} 

\sssec{}

We have the following assertion:

\begin{thm}   \label{t:attractors}
Let $Z$ be an algebraic space of finite type equipped with a $\BG_m$-action. Then

\smallskip

\noindent\emph{(i)} $Z^+$ is an algebraic space of finite type;

\smallskip

\noindent\emph{(ii)} The morphism $q^+:Z^+\to Z^0$ is affine.
\end{thm}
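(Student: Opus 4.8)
The plan is to reduce the statement to the case of an affine scheme equipped with a locally linear $\BG_m$-action, where both assertions follow from the explicit description in \secref{sss:attractors-affine}. So first I would establish the affine case: if $Z=\Spec(A)$ with $A$ a finitely generated $\BZ$-graded $k$-algebra, then $Z^+=\Spec(A^+)$ is of finite type (being a quotient of $A$), and $q^+\colon\Spec(A^+)\to\Spec(A^0)$ is affine since $A^+$ is an $A^0$-algebra. The content is therefore entirely in the descent/locality argument that propagates this from affine charts to an arbitrary algebraic space of finite type.

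\medskip

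The second step is to set up a $\BG_m$-stable affine cover. By a theorem of Sumihiro-type (valid for normal schemes) one knows this classically, but for general algebraic spaces one must argue more carefully; the right statement is: there is a $\BG_m$-equivariant \'etale cover $U\to Z$ with $U$ a $\BG_m$-stable affine scheme, or at least a cover by $\BG_m$-stable \emph{quasi-affine} opens of such. I would invoke the structure theory from \cite{Dr} at exactly this point --- indeed this is why the proof of this theorem is deferred there. Using such a cover, \lemref{l:U^+} controls how $U^+$ sits over $Z^+$: for an \emph{open} $\BG_m$-stable $Y\subset Z$, $Y^+=(q^+)^{-1}(Y^0)$, and for an \emph{\'etale} $\BG_m$-equivariant map the analogous cartesian statement should hold (this is presumably \propref{p:Cartesian}, the ``$\propref{p:Cartesian}$ used in \secref{sss:defining co-unit}'' referenced in the section intro). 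Concretely: the formation of $(-)^+$ commutes with $\BG_m$-equivariant \'etale base change, so $U^+ \cong U\times_Z Z^+$ (once we know $Z^+$ exists) and $U^+\to U^0$ is the base change of $Z^+\to Z^0$.

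\medskip

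The third step assembles the descent. Since $\GMaps(\BA^1,-)$ is a right adjoint (limit-type construction) it preserves fpqc sheaves and commutes with the relevant fiber products, so $U\mapsto U^+$ is a sheaf and the $U^+$ glue to a $k$-space $Z^+$; the \'etale-base-change compatibility then shows $Z^+$ is covered by the representable pieces $U^+$ with an \'etale equivalence relation inherited from $U\times_Z U$, hence $Z^+$ is an algebraic space. Finite type is local on the cover, giving (i). For (ii): $q^+\colon Z^+\to Z^0$ is affine because after the \'etale cover $Z^0\to$ (a cover by affines $U^0$, which are affine by \propref{p:Z^0closed} plus the affineness of $U$), the base change $U^+\to U^0$ is affine by the affine case; affineness of a morphism is \'etale-local on the target, so (ii) follows.

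\medskip

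The main obstacle is the second step --- producing enough $\BG_m$-stable affine (or quasi-affine) charts on a non-normal, non-separated algebraic space, and checking that $(-)^+$ genuinely commutes with $\BG_m$-equivariant \'etale base change rather than merely with open immersions. The open-immersion case is the easy \lemref{l:U^+}; the \'etale case is more delicate because a $\BG_m$-equivariant map $U\to Z$ that is \'etale need not identify $U$-points of the attractor with $Z$-points lying over $U^0$ unless one knows the infinitesimal lifting of $\BA^1$-families, and this is precisely where the results of \cite{Dr} (and \propref{p:Cartesian}) are doing the real work. Everything else is formal unwinding of the adjunction defining $\GMaps$ together with fpqc descent for algebraic spaces.
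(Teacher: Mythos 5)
Your affine case, and the reduction of the locally linear (scheme) case to it via \lemref{l:U^+}(i), is exactly what the paper does: the paper proves the theorem only for a locally linear action on a scheme (cover $Z$ by $\BG_m$-stable open affines $U_i$, observe that $U_i^+=(q^+)^{-1}(U_i^0)$, and conclude from the computation of \secref{sss:attractors-affine}), and defers the general algebraic-space case entirely to \cite[Thm.~1.4.2]{Dr}. So the overall architecture of your proposal is consistent with the paper's.

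However, your sketch of the descent step contains a concrete error. You assert that formation of attractors commutes with $\BG_m$-equivariant \'etale base change in the form $U^+\cong U\underset{Z}\times Z^+$, i.e., base change along $p^+$. This is false already for open immersions: for $U=\BG_m\subset Z=\BA^1$ with the standard action one has $U^+=\emptyset$, while $(p^+)^{-1}(U)=\BG_m$. The correct statement for a $\BG_m$-stable open $U\subset Z$ is $U^+=(q^+)^{-1}(U^0)=Z^+\underset{Z^0}\times U^0$, i.e., base change along $q^+$; the asymmetry between the open case (pull back along $q^+$) and the closed case (pull back along $p^+$) is precisely the content of \lemref{l:U^+}, and your two consecutive claims (``$U^+\cong U\times_Z Z^+$'' and ``$U^+\to U^0$ is the base change of $Z^+\to Z^0$'') are mutually inconsistent. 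You have also misidentified \propref{p:Cartesian}: it states that $(i^+,i^-):Z^0\to Z^+\underset{Z}\times Z^-$ is an open and closed embedding, and plays no role in the representability theorem (it is used for the co-unit in \secref{sss:defining co-unit}). Finally, the existence of a $\BG_m$-stable affine \'etale atlas and the compatibility of $(-)^+$ with equivariant \'etale covers (in the correct, $q^+$-form) are genuinely nontrivial for non-normal, non-separated algebraic spaces; the paper does not attempt this and simply cites \cite{Dr}, so a self-contained general proof would require supplying these inputs rather than gesturing at them.
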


The proof of this theorem is given in \cite[Thm. 1.4.2]{Dr}. Here we will prove a particular case (see \secref{sss:loc lin}),
sufficient for most of the cases of interest to which the main result of this paper applies.

\medskip

Combining \thmref{t:attractors} with \propref{p:Z^0closed}, we obtain:

\begin{cor}  \hfill

\smallskip

\noindent\emph{(i)} If $Z$ is a separated algebraic space of finite type then so is $Z^+$.

\smallskip

\noindent\emph{(ii)} If $Z$ is a scheme of finite type then so is $Z^+$.
\end{cor}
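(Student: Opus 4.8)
The plan is to deduce both statements formally by combining \thmref{t:attractors} with \propref{p:Z^0closed}; there is essentially no geometric content left once those two results are granted, so the argument is a routine manipulation of stability properties of affine and separated morphisms.

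First I would record the inputs. By \propref{p:Z^0closed}, the morphism $Z^0\to Z$ is a closed embedding; consequently, if $Z$ is separated then $Z^0$ is separated (a closed subspace of a separated space is separated), and if $Z$ is a scheme then $Z^0$ is a scheme of finite type. By \thmref{t:attractors}, $Z^+$ is an algebraic space of finite type and the contraction morphism $q^+:Z^+\to Z^0$ is affine. For part (i), I would argue: when $Z$ is a separated algebraic space of finite type, the structure morphism $Z^0\to\on{pt}$ is separated, and since $q^+$ is affine it is separated; composing, $Z^+\to\on{pt}$ is separated, so $Z^+$ is a separated algebraic space, of finite type by \thmref{t:attractors}(i). (Alternatively, for separated $Z$ the morphism $p^+:Z^+\to Z$ is a monomorphism by \secref{sss:structures}(ii), hence separated, which already forces $Z^+$ to be separated.) For part (ii), when $Z$ is a scheme of finite type, $Z^0$ is a scheme; I would cover it by affine open subschemes $U_i$, note that since $q^+$ is affine each $(q^+)^{-1}(U_i)$ is an affine scheme, and observe that these form a Zariski-open cover of $Z^+$; since an algebraic space admitting an open cover by schemes is a scheme, $Z^+$ is a scheme, of finite type by \thmref{t:attractors}(i).

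I do not expect a genuine obstacle in this argument: the only facts one must remember to invoke are that affine morphisms are separated, that separated morphisms are stable under composition, and that an algebraic space which is Zariski-locally a scheme is itself a scheme. Whatever real difficulty there is has already been absorbed into \thmref{t:attractors} (proved in full in \cite{Dr}, with the case sufficient for our applications treated in \secref{sss:loc lin}) and into \propref{p:Z^0closed}.
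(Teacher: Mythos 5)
Your argument is correct and is exactly the paper's proof, merely written out in more detail: the paper likewise deduces both parts from Theorem~\ref{t:attractors}(ii) (affineness of $q^+$) together with Proposition~\ref{p:Z^0closed} ($Z^0$ closed in $Z$, hence separated, resp.\ a scheme, whenever $Z$ is). The extra details you supply (affine implies separated, separated morphisms compose, an algebraic space affine over a scheme is a scheme) are the standard facts the paper leaves implicit.
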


\begin{proof}
Follows from  Theorem~\ref{t:attractors}(ii) because by \propref{p:Z^0closed}, $Z^0$ is a closed subspace of~$Z$.
\end{proof}
 
\sssec{The case of a locally linear action}  \label{sss:loc lin}

\begin{defn} \label{d:locally linear} 
An action of $\BG_m$ on a scheme $Z$ is said to be \emph{locally linear} if
$Z$ can be covered by open affine subschemes preserved by the $\BG_m$-action.  
\end{defn} 

\begin{rem}  \label{r:q-proj loc lin}

Suppose that $Z$ admits a $\BG_m$-equivariant locally closed embedding into
a projective space $\BP(V)$, where $\BG_m$ acts linearly on $V$.  Then the action
of $\BG_m$ is locally linear.

\medskip

For this reason, locally linear actions include most of the cases of interest that come
up in practice. 

\end{rem}

\begin{rem}   \label{r:locally linear} 
If $k$ is \emph{algebraically closed} and $Z_{\red}$ is a \emph{normal} separated\footnote{We do not know if separateness is really 
necessary in Sumihiro's theorem.} scheme of finite type over $k$, then by a theorem of H.~Sumihiro, \emph{any action of $\BG_m$ on  
$Z$ is locally linear}. (The proof of this theorem is contained in \cite{Sum} and also in  \cite[p.20-23]{KKMS} and \cite{KKLV}.)
%Because of Sumihiro's theorem, the reader may prefer to restrict his attention to schemes equipped 
%with a locally linear action, which is technically simpler than the case of arbitrary sections. 
\end{rem}

\sssec{}

Let us prove \thmref{t:attractors} in the locally linear case on a scheme. First, we note that \lemref{l:U^+}(i) reduces the assertion
to the case when $Z$ is affine. In the latter case, the assertion is manifest from \secref{sss:attractors-affine}.

\ssec{Further results on attractors}  \label{ss:further contractors}

The results of this subsection are included for completeness; they will not be used
for the proof of the main theorem of this paper.  

\medskip

We let $Z$ be an 
algebraic space of finite type equipped with a $\BG_m$-action. 

\sssec{}

We have:

\begin{prop} \label{p: p^+} \hfill

\smallskip

\noindent{\em(i)} If $Z$ is separated then $p^+:Z^+\to Z$ is a monomorphism.

\smallskip

\noindent{\em(ii)} If $Z$ is an affine scheme then $p^+:Z^+\to Z$ is a closed embedding. 

\smallskip

\noindent{\em(iii)} If $Z$ is proper then each geometric fiber of $p^+:Z^+\to Z$ is reduced and has exactly one geometric point.

\smallskip

\noindent{\em(iv)} The fiber of $p^+:Z^+\to Z$ over any geometric point of $Z^0\subset Z$ is reduced and has exactly one geometric point.

\end{prop}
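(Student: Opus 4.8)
The plan is to prove the four assertions essentially independently, as parts (i) and (ii) are immediate from the discussion already given, while (iii) and (iv) require a short extra argument. For (i): recall from \secref{sss:structures}(ii) that $p^+$ is the composite $\GMaps(\BA^1,Z)\to \GMaps(\BG_m,Z)=Z$ given by restriction along the open immersion $\BG_m\mono \BA^1$. To see it is a monomorphism, let $f,g\colon \BA^1\times S\to Z$ be two morphisms agreeing on $\BG_m\times S$; since $Z$ is separated the equalizer of $f$ and $g$ is a closed subspace of $\BA^1\times S$, and it contains $\BG_m\times S$, which is schematically dense in $\BA^1\times S$ (for $S=\Spec R$ this is the injectivity of $R[t]\mono R[t,t^{-1}]$); hence the equalizer is all of $\BA^1\times S$ and $f=g$. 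For (ii): by \secref{sss:attractors-affine}, if $Z=\Spec A$ then $Z^+=\Spec(A^+)$ with $A^+$ a quotient algebra of $A$, so $p^+$ is a closed embedding.

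For (iv): let $z\in Z^0(\bar k)\subset Z(\bar k)$; I may assume $k=\bar k$. The constant map $\BA^1\to\{z\}\mono Z$ is $\BG_m$-equivariant (as $z$ is fixed) and lies over $z$, so the fiber $(p^+)^{-1}(z)$ is nonempty, and it suffices to show that for every affine test scheme $S$ the set $(p^+)^{-1}(z)(S)$ is a single point. An element is a $\BG_m$-equivariant morphism $f\colon \BA^1\times S\to Z$ whose restriction to $\{1\}\times S$ is the constant map at $z$; by equivariance, $f$ restricted to $\BG_m\times S$ sends $(t,s)$ to $t\cdot f(1,s)=t\cdot z=z$, i.e.\ it is the constant map at $z$. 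Now $Z^0\mono Z$ is a closed embedding by \propref{p:Z^0closed}, so $f^{-1}(z)\subset \BA^1\times S$ is a closed subspace containing the schematically dense $\BG_m\times S$; hence $f^{-1}(z)=\BA^1\times S$ and $f$ is the constant map at $z$. Thus $(p^+)^{-1}(z)(S)$ has exactly one element, so $(p^+)^{-1}(z)\iso \on{pt}$.

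For (iii): $Z$ proper implies $Z$ is separated and of finite type, so by (i), \thmref{t:attractors} and \propref{p:Z^0closed}, $p^+$ is a monomorphism of finite type between algebraic spaces of finite type; by the characterization of monomorphisms recalled in \secref{ss:k spaces}, each geometric fiber is then a reduced scheme with at most one point. It remains to see that $p^+$ is surjective on geometric points. Given $z\in Z(\bar k)$, the orbit map $a_z\colon \BG_m\to Z$, $t\mapsto t\cdot z$, is a rational map from the smooth curve $\BA^1$ to the proper algebraic space $Z$, hence by the valuative criterion of properness extends to a morphism $f\colon \BA^1\to Z$; uniqueness of the extension follows since $Z$ is separated. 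To check $f$ is $\BG_m$-equivariant, observe that the two morphisms $\BG_m\times\BA^1\to Z$ given by $(\lambda,t)\mapsto \lambda\cdot f(t)$ and $(\lambda,t)\mapsto f(\lambda t)$ agree on $\BG_m\times\BG_m$, which is schematically dense in $\BG_m\times\BA^1$; since $Z$ is separated they agree, so $f\in \GMaps(\BA^1,Z)(\bar k)=Z^+(\bar k)$ and $p^+(f)=f(1)=z$.

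The only step where any real care is needed is the surjectivity in (iii): one must ensure that the morphism produced by the valuative criterion is genuinely $\BG_m$-equivariant (handled above by schematic density together with separatedness of $Z$) so that it defines a point of $Z^+$ rather than merely of $\MMaps(\BA^1,Z)$. Everything else — parts (i), (ii), (iv), and the "at most one reduced point" half of (iii) — is formal, using only the closedness of $Z^0$ in $Z$, the affine description of $Z^+$, and the density of $\BG_m$ in $\BA^1$.
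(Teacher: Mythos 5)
Your proof is correct and follows essentially the same route as the paper: (i) and (ii) by the arguments already given in Sects.~\ref{sss:structures}(ii) and \ref{sss:attractors-affine}, (iii) by combining the monomorphism property with the valuative criterion (the paper likewise deduces it from (i) plus the extension of maps $\BA^1-\{0\}\to Z$), and (iv) by the closedness/schematic-density argument that the paper packages as \lemref{l:constant} via \lemref{l:U^+}(ii). The only nitpick is in (iv): what your argument actually needs is that $\{z\}\hookrightarrow Z$ is a closed embedding (a rational point of a finite-type algebraic space is closed --- the fact the paper itself invokes), not merely that $Z^0\hookrightarrow Z$ is closed; since $\{z\}$ is closed in $Z^0$ and $Z^0$ is closed in $Z$ by \propref{p:Z^0closed}, this is immediate and the argument stands.
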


\begin{proof}

Point (i) has been proved in \secref{sss:structures}(ii). Point (ii) is manifest from \secref{sss:attractors-affine}. Point (iii)
follows from point (i) and the fact that any morphism from $\BA^1-\{ 0\}$ to a proper scheme extends to the whole $\BA^1$.

\medskip

After base change, point (iv) is equivalent to the following lemma:
\begin{lem}    \label{l:constant}
If $f:\BA^1\to Z$ is a $\BG_m$-equivariant morphism such that
$f(1)\in Z^0$ then $f$ is constant. 
\end{lem}

\end{proof}

\begin{proof}[Proof of \lemref{l:constant}]
The map $\on{pt}\to Z$, corresponding to $f(1)\in Z(k)$ is a closed embedding (whether or not $Z$ is separated).
Hence, the assertion follows from \lemref{l:U^+}(ii).
\end{proof}

\begin{example}  \label{ex:P^1}
Let  $Z$ be the projective line $\BP^1$ equipped with the usual action of $\BG_m\,$. Then $p^+:Z^+\to Z$ is the
canonical morphism $\BA^1\sqcup\{\infty\}\to\BP^1$. In particular, $p^+$ is  \emph{not a locally closed embedding.}
\end{example}

\begin{example}  \label{e:P^1 glued}
Let $Z$ be the curve obtained from $\BP^1$ by gluing $0$ with $\infty$. Equip $Z$ with the $\BG_m$-action induced by 
the usual action on $\BP^1$. The map $\BP^1\to Z$ induces a map $(\BP^1)^+\to Z^+$. It is easy to see that the composed map
$$\BA^1\hookrightarrow (\BP^1)^+\to Z^+$$
is an isomorphism $\BA^1\simeq Z^+$. 
\end{example}

\begin{rem}

Suppose that the action of $\BG_m$ is locally linear. Then \propref{p: p^+}(ii) and \lemref{l:U^+} imply that
the map $p^+$ is, \emph{Zariski locally on the source}, a locally closed embedding. 

\medskip

Note, however, that is is not the case in general, as can be seen from Example \ref{e:P^1 glued}.

\end{rem}

\sssec{}

In the example of $\BP^1$, the restriction of $p^+:Z^+\to Z$ to each connected component of $Z^+$   \emph{is} a locally closed embedding. 
This turns out to be true in a surprisingly large class of situations (but there are also important examples when this is false):

\begin{thm}
Let $Z$ be a separated scheme over an algebraically closed field $k$ equipped with a $\BG_m$-action.
Then each of the following conditions ensures that the restriction of $p^+:Z^+\to Z$ to each connected component 
\footnote{Using the $\BA^1$-action on $Z^+$, it is easy to see that each connected component of $Z^+$ is the preimage of a connected component of 
$Z^0$ with respect to the map $q^+:Z^+\to Z^0\,$.} of $Z^+$  is a locally closed embedding:

\smallskip

\noindent{\em(i)} $Z$ is smooth;

\smallskip

\noindent{\em(ii)} $Z$ is normal and quasi-projective;

\smallskip

\noindent{\em(iii)} $Z$ admits a $\BG_m$-equivariant locally closed embedding into
a projective space $\BP(V)$, where $\BG_m$ acts linearly on $V$. 
\end{thm}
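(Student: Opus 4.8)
The plan is to prove (iii) by a direct computation on $\BP(V)$ and to reduce (ii) and (i) to it via known structure theorems. So I would begin with \textbf{case (iii)}. Write $W:=\BP(V)$ and decompose $V=\bigoplus_n V_n$ into $\BG_m$-weight spaces; set $V_{\ge n}:=\bigoplus_{m\ge n}V_m$ and $V_{>n}:=\bigoplus_{m>n}V_m$. First I would record that $W^0=\bigsqcup_{V_n\ne0}\BP(V_n)$ and that these are exactly the connected components of $W^0$. Next, using the standard description of a $\BG_m$-equivariant morphism $\BA^1\to\BP(V)$ as a $\BG_m$-stable line subbundle of the trivial bundle $V$ over $\BA^1$ (equivalently, using the limit formula $\lim_{t\to0}t\cdot[v]=[\text{leading-weight component of }v]$ together with the fact that $p^+$ is a monomorphism, \propref{p: p^+}(i)), I would identify the connected component $W^+_n:=(q^+)^{-1}(\BP(V_n))$ of $W^+$ with the locally closed subscheme $\BP(V_{\ge n})\setminus\BP(V_{>n})\subset W$; under this identification $p^+|_{W^+_n}$ becomes the inclusion of that subscheme, while $q^+\colon W^+_n\to\BP(V_n)$ is the Zariski-locally trivial affine bundle whose fiber over a line $\ell$ is $\Hom(\ell,V_{>n})$ (so $W^+_n$ is indeed connected, being an affine bundle over the connected base $\BP(V_n)$). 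This proves the theorem for $W=\BP(V)$.

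The second step transfers this to a locally closed equivariant subscheme. Let $i\colon Z\hookrightarrow W$ be a $\BG_m$-equivariant locally closed embedding and let $\overline{Z}\subset W$ be its scheme-theoretic closure, which is automatically $\BG_m$-stable. By \lemref{l:U^+}(ii), $\overline{Z}^+=(p^+_W)^{-1}(\overline{Z})$ is closed in $W^+$, and by \lemref{l:U^+}(i) applied to the open embedding $Z\hookrightarrow\overline{Z}$, the subspace $Z^+$ is open in $\overline{Z}^+$; hence $Z^+$ is locally closed in $W^+$ and $p^+_Z$ is the restriction of $p^+_W$. Since $Z$ is of finite type, $Z^+$ is too (\thmref{t:attractors}), so it has finitely many connected components, each clopen in $Z^+$ and therefore locally closed in $W^+$. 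A connected component $C$ of $Z^+$, being connected, lies in a single connected component $W^+_n$ of $W^+$ and is then locally closed in $W^+_n$; composing the locally closed embedding $C\hookrightarrow W^+_n$ with the locally closed embedding $p^+_W|_{W^+_n}\colon W^+_n\hookrightarrow W$ shows that $p^+|_C\colon C\to W$ is a locally closed embedding, and since it factors through the locally closed subscheme $Z\subset W$ we obtain a locally closed embedding $C\to Z$. This settles (iii), and also \textbf{case (ii)}: by Sumihiro's theorem \cite{Sum} a normal quasi-projective variety with a $\BG_m$-action admits a $\BG_m$-equivariant locally closed embedding into some $\BP(V)$ with $\BG_m$ acting linearly on $V$, so (ii) reduces to (iii).

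For \textbf{case (i)}: a smooth separated scheme is normal and separated, so by Sumihiro's theorem (\remref{r:locally linear}) the $\BG_m$-action is locally linear, and the assertion becomes the Bialynicki--Birula decomposition theorem — $Z^0$ is smooth, and each connected component $C=(q^+)^{-1}(F)$ of $Z^+$ (with $F$ a connected component of $Z^0$) is a smooth locally closed subscheme of $Z$ via $p^+$, with $q^+\colon C\to F$ an equivariant affine bundle. If one wishes to stay self-contained, one can argue directly: since $F\subset U^0$ for any $\BG_m$-invariant open $U\supseteq F$, one has $C\subset U^+$ (by \lemref{l:U^+}(i)), so $Z$ may be replaced by such a $U$; choosing $U$ so that smoothness provides a $\BG_m$-equivariant \'etale morphism $U\to N$ to the normal bundle $N=N_{F/Z}$ (a $\BG_m$-equivariant vector bundle over $F$) restricting to the identity on $F$, one reduces to the model case of $N$, whose attractor is the nonnegative-weight subbundle $N^{\ge0}\subset N$ — a closed embedding, connected over connected $F$; here one uses that forming the attractor commutes with \'etale base change, in the spirit of \propref{p:Cartesian}.

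I expect the \textbf{main obstacle} to be the computation of $W^+=\GMaps(\BA^1,\BP(V))$ in case (iii), and specifically pinning down its scheme structure: one must show that $W^+_n$ is \emph{exactly} the reduced locally closed subscheme $\BP(V_{\ge n})\setminus\BP(V_{>n})$, carrying no nilpotent thickening and nothing extra, which I would deduce by combining the line-subbundle classification of equivariant maps out of $\BA^1$ with the monomorphism property of $p^+$ (\propref{p: p^+}(i)). For case (i), the honest difficulty is that a complete self-contained proof essentially reproves Bialynicki--Birula, so citing that theorem is the sensible route.
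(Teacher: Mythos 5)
Your proposal is correct and follows essentially the same route as the paper: the paper's proof consists precisely of citing Bia{\l}ynicki-Birula for (i), observing that (iii) ``immediately follows from the easy case $Z=\BP(V)$'', and reducing (ii) to (iii) via Sumihiro's Theorem~1. Your computation of $W^+_n=\BP(V_{\ge n})\setminus\BP(V_{>n})$ and the reduction to a locally closed equivariant subscheme via \lemref{l:U^+} are exactly the details the paper leaves implicit, and they check out.
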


\medskip

Case (i) is due to A.~Bia{\l}ynicki-Birula \cite{Bia}. Case (iii) immediately follows from the easy case $Z=\BP(V)$. Case (ii) turns out to be a particular 
case of (iii) because by Theorem~1 from \cite{Sum}, if $Z$ is normal and quasi-projective then it admits a $\BG_m$-equivariant locally 
closed embedding into a projective space.

\begin{rem}
In case (i) the condition that $Z$ be a scheme (rather than an algebraic space) is essential, as shown by A.~J.~Sommese \cite{Som}. 

\medskip

In case (ii) the quasi-projectivity condition is essential, as shown by 
J.~Konarski \cite{Kon} using a method developed by J.~Jurkiewicz \cite{Ju1,Ju2}. 
%Jurkiewicz's example is also explained in \cite{Kon}. 
In this example $Z$ is a 3-dimensional toric variety which is proper but not projective; it is constructed by drawing a certain picture on a 2-sphere, 
see the last page of \cite{Kon}.

\medskip

In case (ii) normality is clearly essential, see Example \ref{e:P^1 glued}.

\end{rem} 

\ssec{Differential properties}

The results of this subsection are included for the sake of completeness and will not be needed for the sequel.

\medskip

We let $Z$ be an algebraic space of finite type, equipped with an action of $\BG_m$. 

\sssec{}

First, we have:

\begin{lem}   \label{l:TZ0}
For any $z\in Z^0$ the tangent space\footnote{We define the tangent space by $T_zZ:=(T_z^*Z)^*$, where 
$T_z^*Z$ is the fiber of $\Omega^1_{Z/k}$ at $z$. (The equality $T_z^*Z=m_z/m_z^2$ holds 
\emph{if the residue field of $z$ is finite and separable} over $k$.)} $T_zZ^0\subset T_zZ$ equals $(T_zZ)^{\BG_m}$.
\end{lem}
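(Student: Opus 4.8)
The plan is to describe both $T_zZ^0$ and $(T_zZ)^{\BG_m}$ by morphisms out of the dual numbers and to observe that the two resulting conditions coincide on the nose. Write $\kappa:=\kappa(z)$, and let $z$ also denote the canonical morphism $\Spec\kappa\to Z$; since $z$ lies in the closed subspace $Z^0=\GMaps(\on{pt},Z)$, this morphism is $\BG_m$-equivariant, with $\BG_m$ acting trivially on $\Spec\kappa$. For an \emph{arbitrary} point $z$ one has a canonical identification of $T_zZ$ with $\on{Der}_k(\CO_{Z,z},\kappa)$, equivalently with the set of morphisms $v\colon\Spec\kappa[\epsilon]/(\epsilon^2)\to Z$ whose restriction to $\Spec\kappa$ is $z$; no separability hypothesis is needed here, since the hypothesis in the footnote concerns only the identification $T_z^*Z\cong m_z/m_z^2$ and is irrelevant to $\on{Der}_k(\CO_{Z,z},\kappa)$. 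As $i\colon Z^0\mono Z$ is a closed embedding inducing an isomorphism on residue fields at $z$, the same description applies verbatim to $Z^0$, and under it the inclusion $T_zZ^0\subset T_zZ$ becomes the inclusion of those $v$ which factor through $i$.

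Next, recall that a morphism from any space $S$ to $Z^0=\GMaps(\on{pt},Z)$ is the same as a morphism $f\colon S\to Z$ for which $\on{act}\circ(\id_{\BG_m}\times f)=f\circ\pr_S$ as morphisms $\BG_m\times S\to Z$, where $\on{act}\colon\BG_m\times Z\to Z$ denotes the action (that is, $f$ is $\BG_m$-equivariant for the trivial action on $S$). Applying this with $S=\Spec\kappa[\epsilon]/(\epsilon^2)$ shows that $v\in T_zZ^0$ if and only if $\on{act}\circ(\id_{\BG_m}\times v)$ and $v\circ\pr$ agree as morphisms $\BG_m\times\Spec\kappa[\epsilon]/(\epsilon^2)\to Z$. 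On the other hand, since $z$ is fixed by $\BG_m$, the group acts $\kappa$-linearly on $T_zZ$ — via the tautological $\BG_m$-equivariant structure on $\Omega^1_{Z/k}$ — so that $T_zZ$ becomes a $\BZ$-graded $\kappa$-vector space and $(T_zZ)^{\BG_m}$ is its degree-zero component. Unwinding this action, a point $\lambda$ of $\BG_m$ sends a tangent vector $v$, viewed as a map $\Spec\kappa[\epsilon]/(\epsilon^2)\to Z$, to $\on{act}_\lambda\circ v$; hence the associated coaction $T_zZ\to T_zZ\otimes_\kappa\kappa[t,t^{-1}]$ carries $v$ to $\on{act}\circ(\id_{\BG_m}\times v)$, this expression indeed defining a $\kappa[t,t^{-1}]$-point of the tangent space at $z$ precisely because the restriction of $\on{act}\circ(\id_{\BG_m}\times v)$ to $\BG_m\times\Spec\kappa$ equals the constant morphism $z$, again because $z$ is fixed. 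Thus $v\in(T_zZ)^{\BG_m}$ if and only if $\on{act}\circ(\id_{\BG_m}\times v)=v\otimes1$, which is exactly the condition found above for $v\in T_zZ^0$. Therefore $T_zZ^0=(T_zZ)^{\BG_m}$.

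I do not anticipate a serious obstacle: the argument is essentially formal once the dual-numbers description of $T_zZ$ at a possibly non-rational point $z$ is in place and the two $\BG_m$-structures are correctly matched with it. The step most worth spelling out is the double identification — of the inclusion $T_zZ^0\subset T_zZ$ with postcomposition by $i$, and of the $\BG_m$-action on $T_zZ$ with postcomposition by $\on{act}_\lambda$ — both of which are instances of functoriality of $\Omega^1_{(-)/k}$, and which may alternatively be verified on the affine model of Example~\ref{ex:fixed-affine}, where $Z=\Spec A$ with $A$ a $\BZ$-graded algebra and the claim reduces to the statement that the grading induced on $m_z/m_z^2$ by that of $A$ records the $\BG_m$-weights on $T_z^*Z$. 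In characteristic $0$ one may instead observe that $Z^0$ is the zero locus of the vector field $\xi$ generating the action, so that $T_zZ^0=\ker\bigl(d\xi_z\colon T_zZ\to T_zZ\bigr)$, and that $d\xi_z$ is the infinitesimal generator of the induced $\BG_m$-action on $T_zZ$, whose kernel is precisely the weight-zero subspace.
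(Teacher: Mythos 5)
Your proof is correct and follows essentially the same route as the paper, whose entire argument is the two-line hint ``reduce to the case $\kappa(z)=k$ by base change, then compute $T_zZ^0$ in terms of morphisms $\Spec k[\varepsilon]/(\varepsilon^2)\to Z^0$.'' You simply carry out that dual-numbers computation in detail, working directly over the residue field $\kappa(z)$ (via the coaction $T_zZ\to T_zZ\otimes_\kappa\kappa[t,t^{-1}]$) instead of base-changing first, which is a harmless variant.
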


\begin{proof}
We can assume that the residue field of $z$ equals $k$ (otherwise do base change). Then compute $T_zZ^0$ in terms of morphisms 
$\Spec k[\varepsilon ]/(\varepsilon^2 )\to Z^0$.
\end{proof}

\sssec{}

Next we claim:

\begin{prop} \label{p:unrami}
Let $Z$ be an algebraic space of finite type equipped with a $\BG_m$-action. Then the 
map $p^+:Z^+\to Z$ is unramified.
\end{prop}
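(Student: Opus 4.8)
The plan is to reduce the unramifiedness of $p^+:Z^+\to Z$ to a computation of tangent spaces along the fixed locus, using the $\BA^1$-action on $Z^+$ to transport information from an arbitrary point of $Z^+$ to the corresponding point of $Z^0$. Recall that unramified means locally of finite presentation with finite reduced geometric fibers; since $Z^+$ is of finite type over $k$ by \thmref{t:attractors}, finite presentation is automatic, so the content is that the geometric fibers of $p^+$ are finite and reduced. Equivalently (for a morphism locally of finite type between algebraic spaces), it suffices to check that for every geometric point $\tilde z\in Z^+$ with image $z=p^+(\tilde z)\in Z$, the induced map on tangent spaces $T_{\tilde z}Z^+\to T_zZ$ is injective. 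So after base change to an algebraically closed field we must show: a $\BG_m$-equivariant map $f:\Spec k[\varepsilon]/(\varepsilon^2)\times\BA^1\to Z$ whose restriction to $\{1\}\times\BA^1$ is a fixed point-datum and whose image in $T_zZ$ is zero must itself be zero.

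First I would reduce to the case $\tilde z\in Z^0$. Using the $\BA^1$-action on $Z^+$ from \secref{sss:structures}(i), the point $0\in\BA^1$ acts on $Z^+$ and sends $\tilde z$ to $i^+(q^+(\tilde z))\in Z^0\subset Z^+$; moreover this $\BA^1$-action extends the $\BG_m$-action, so acting by a varying parameter $t\in\BG_m$ gives a family of points of $Z^+$ with the same fiber-dimension behavior, and one can check (using that $p^+$ is $\BG_m$-equivariant and that the $\BG_m$-action extends to $\BA^1$) that $p^+$ is unramified at $\tilde z$ iff it is unramified at the limit point $i^+(q^+(\tilde z))$. Concretely: the tangent map at $\tilde z$ is obtained from the tangent map at the fixed point by a flat base-change/limit argument, so injectivity at fixed points implies injectivity everywhere. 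Thus it is enough to treat a point of the form $i^+(z_0)$ with $z_0\in Z^0$.

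Next, at a fixed point $z_0\in Z^0$ I would compute $T_{i^+(z_0)}Z^+$ explicitly as a $\BG_m$-representation. A tangent vector is a $\BG_m$-equivariant map $\BA^1\to Z$ over $\Spec k[\varepsilon]/(\varepsilon^2)$ reducing to the constant map $z_0$; unwinding, this is the same as a $\BG_m$-equivariant element of $\Maps(\BA^1,\,T_{z_0}Z)$ where $\BG_m$ acts on $\BA^1$ by dilation and on $T_{z_0}Z$ via its given action. Since $\BG_m$-equivariant maps $\BA^1\to V$ for a representation $V$ are exactly polynomial maps landing, coefficient by coefficient, in the appropriate weight spaces, one gets that $T_{i^+(z_0)}Z^+$ is the sum of the non-negative weight spaces of $T_{z_0}Z$, i.e. $T_{i^+(z_0)}Z^+=(T_{z_0}Z)^{\geq 0}$, and under this identification the tangent map of $p^+$ (restriction to $1\in\BA^1$) is the inclusion $(T_{z_0}Z)^{\geq 0}\hookrightarrow T_{z_0}Z$, which is visibly injective. (This is the infinitesimal shadow of \secref{sss:attractors-affine} and is compatible with \lemref{l:TZ0}, which identifies the weight-zero part with $T_{z_0}Z^0$.)

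The main obstacle is making the reduction in the second paragraph precise without normality or separateness hypotheses on $Z$: for non-separated $Z$ the map $p^+$ need not be a monomorphism (Example~\ref{ex:P^1}, Example~\ref{e:P^1 glued}), so one genuinely must argue at the level of tangent spaces rather than points, and one must be careful that the $\BA^1$-action on $Z^+$ behaves well enough to carry out the degeneration $\tilde z\rightsquigarrow i^+(q^+(\tilde z))$. A clean way around this is to avoid the limit argument entirely: work directly at an arbitrary geometric point $\tilde z\in Z^+$ corresponding to a $\BG_m$-equivariant $f:\BA^1\to Z$, and compute $T_{\tilde z}Z^+$ as the space of $\BG_m$-equivariant lifts of $f$ to $\BA^1\times\Spec k[\varepsilon]/(\varepsilon^2)$, i.e. as the weight-$\geq 0$ part of $\Gamma(\BA^1, f^*T_{Z/k})$ viewed with its $\BG_m$-action; the tangent map of $p^+$ is then evaluation at $1\in\BA^1$, and injectivity amounts to the statement that a $\BG_m$-equivariant section of $f^*T_{Z/k}$ over $\BA^1$ that vanishes at $1$ vanishes identically — which holds because such a section is a polynomial in the coordinate with coefficients in prescribed weight spaces, hence determined by its value at any single point of $\BG_m$. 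This bypasses the need for any geometric reduction and works for arbitrary algebraic spaces of finite type.
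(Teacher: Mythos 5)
Your final paragraph is essentially the paper's own proof: the paper likewise skips any reduction to fixed points, identifies $T_{\zeta}Z^+$ with $\Hom_{\BG_m}(f^*(\Omega^1_Z),\CO_{\BA^1})$, identifies the tangent map of $p^+$ with evaluation at $1\in\BA^1$, and concludes because an equivariant $\varphi$ killed at $1$ vanishes on all of $\BG_m$ while $\CO_{\BA^1}$ has no nonzero sections supported at $0$. The one small correction is that for singular $Z$ the tangent space is $\Hom(f^*(\Omega^1_Z),\CO_{\BA^1})$ rather than $\Gamma(\BA^1,f^*T_{Z/k})$ (these differ when $f^*\Omega^1_Z$ is not locally free), and the clean reason the last step closes is the torsion-freeness of the target $\CO_{\BA^1}$ rather than a literal ``polynomial with coefficients in weight spaces'' description of the sections.
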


\begin{proof}

We can assume that $k$ is algebraically closed. Then we have to check that for any 
$\zeta\in Z^+(k)$ the map of tangent spaces
\begin{equation}  \label{e:differential}
 T_{\zeta}Z^+\to T_{p^+({\zeta})}Z
\end{equation}
induced by $p^+:Z^+\to Z$ is injective. Let $f:\BA^1\to Z$ be the $\BG_m$-equivariant morphism corresponding to $\zeta$. Then 
\begin{equation}  \label{e:TZ+l}
T_{\zeta}Z^+=\Hom_{\BG_m}(f^*(\Omega^1_Z),\CO_{\BA^1}),
\end{equation}
and the map \eqref{e:differential} assigns to a $\BG_m$-equivariant morphism 
$\varphi :f^*(\Omega^1_Z)\to\CO_{\BA^1}$ the corresponding map between fibers at $1\in\BA^1$. 
So the kernel of \eqref{e:differential} consists of those
$\varphi\in\Hom_{\BG_m}(f^*(\Omega^1_Z),\CO_{\BA^1})$ for which $\varphi |_{\BA^1-\{ 0\}}=0$. 
This implies that $\varphi =0$ because $\CO_{\BA^1}$ 
has no nonzero sections supported at $0\in\BA^1$.

\end{proof}

\sssec{}

Finally, we claim:

\begin{prop}   \label{p:smoothness}
Suppose that $Z$ is smooth. Then $Z^0$ and $Z^+$  are  smooth. Moreover, the  morphism 
$q^+:Z^+\to Z^0$ is smooth.
\end{prop}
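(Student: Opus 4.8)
The plan is to prove \propref{p:smoothness} by reducing to the affine locally linear case, where everything becomes a statement about graded algebras, and then leveraging the fact that smoothness can be checked on tangent spaces together with the infinitesimal description of $Z^+$ from \eqref{e:TZ+l}. First I would observe that by Sumihiro's theorem (\remref{r:locally linear}) — combined with the fact that smoothness is local and that the tangent space computations of \secref{l:TZ0} and \propref{p:unrami} are insensitive to passage to an open cover — we may assume $k$ is algebraically closed and that the $\BG_m$-action is locally linear, hence (using \lemref{l:U^+}(i), which says $Y^+ = (q^+)^{-1}(Y^0)$ for $Y$ open) we may further assume $Z = \Spec(A)$ is affine with $A$ a $\BZ$-graded smooth $k$-algebra. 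By \secref{sss:attractors-affine}, $Z^+ = \Spec(A^+)$ with $A^+ = A/(\text{homogeneous elements of negative degree})$ and $Z^0 = \Spec(A^0)$ with $A^0$ the degree-zero component of $A^+$.

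The key step is to show $q^+ : Z^+ \to Z^0$ is smooth; smoothness of $Z^0$ and $Z^+$ then follows, since $Z^0$ is a closed subspace of the smooth $Z$ which is a retract (via $i^+, q^+$) of $Z^+$, and more directly because a graded smooth algebra has smooth degree-zero part. The cleanest route to smoothness of $q^+$ is to identify $Z^+$, Zariski-locally, as a vector bundle — or more precisely a relative affine space — over $Z^0$. Indeed, since $Z$ is smooth and $\BG_m$ acts, the tangent space $T_z Z$ at any fixed point $z \in Z^0$ decomposes as $(T_zZ)^{<0} \oplus (T_zZ)^0 \oplus (T_zZ)^{>0}$ under the grading, and by \eqref{e:TZ+l} applied to the constant map $f \equiv z$ one gets $T_z Z^+ = \Hom_{\BG_m}(f^*\Omega^1_Z, \CO_{\BA^1}) = (T_zZ)^{\geq 0}$, while $T_z Z^0 = (T_zZ)^0$ by \lemref{l:TZ0}. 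Thus the relative tangent space of $q^+$ at $z$ is $(T_zZ)^{>0}$, which has the right dimension. To turn this pointwise statement into genuine smoothness of the morphism, I would use the Bia{\l}ynicki-Birula-type local structure: near a fixed point, a smooth $\BG_m$-scheme is $\BG_m$-equivariantly isomorphic to the tangent space $T_zZ$ with its linear action (this is a standard equivariant formal/étale slice argument, valid since we are over a field and $\BG_m$ is linearly reductive), and for a linear representation $V = V^{<0} \oplus V^0 \oplus V^{>0}$ one has $V^+ = V^{\geq 0}$ with $q^+ : V^{\geq 0} \to V^0$ the obvious projection, manifestly smooth. Étale-localizing this isomorphism and invoking \lemref{l:U^+} to match up the attractors gives smoothness of $q^+$ in a neighborhood of each point of $Z^0$; since every point of $Z^+$ maps under the $\BA^1$-action (\secref{sss:structures}(i)) into such a neighborhood — the limit $q^+(\zeta) \in Z^0$ — and $q^+$ is $\BA^1$-equivariant in the appropriate sense, smoothness propagates to all of $Z^+$.

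The main obstacle I anticipate is the equivariant local structure theorem used above: asserting that a smooth $\BG_m$-variety is étale-locally near a fixed point isomorphic to the $\BG_m$-representation on the tangent space. Over an arbitrary field this requires some care — one wants a $\BG_m$-equivariant étale neighborhood, which follows from equivariant versions of the Artin approximation / Luna slice machinery for the linearly reductive group $\BG_m$, but the write-up has to be honest about whether it is working formally, étale-locally, or Zariski-locally. An alternative that sidesteps the slice theorem: argue directly with the graded ring $A$. Since $A$ is smooth, for each graded prime $\fp \supset A^{>0}$ one can choose a homogeneous regular system of parameters; the negatively-graded ones cut out $Z^+$ and the nonzero-degree ones cut out $Z^0$ inside $Z$, and a local computation of Kähler differentials shows $\Omega^1_{A^+/A^0}$ is locally free of the expected rank with $A^+$ flat over $A^0$, which is exactly smoothness of $q^+$. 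This is more elementary but requires checking that the grading is compatible with a regular system of parameters — true because $\fm_\fp$ is a graded ideal and graded modules over the graded local ring behave well — and this compatibility is the one genuinely technical point. Either way, once $q^+$ is smooth, smoothness of $Z^+$ follows from smoothness of $Z^0$ (which we get from $A^0 = A^+/(A^+)^{>0}$ with $A^+$ smooth over $A^0$, or from $Z^0$ being cut out in the smooth $Z$ by part of a regular system of parameters), completing the proof.
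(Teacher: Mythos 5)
Your proposal takes a genuinely different route from the paper, and as written it has real gaps. The paper's proof is a direct deformation-theoretic check of formal smoothness of $q^+$: given a square-zero extension $R\to\bar R$ and the relevant lifting problem for $\BG_m$-equivariant maps $\Spec(R)\times\BA^1\to Z$, one first lifts \emph{non-equivariantly} (possible because $Z$ is smooth and the source is affine), and then observes that the obstruction to correcting the lift to an equivariant one lies in $H^1(\BG_m,M)$ for a suitable $\BG_m$-module $M$, which vanishes by linear reductivity of $\BG_m$. This requires no reduction to the affine or locally linear case and works verbatim for algebraic spaces over an arbitrary field; smoothness of $Z^0$ is obtained by the same argument with $\on{pt}$ in place of $\BA^1$, and smoothness of $Z^+$ then follows by composing $Z^+\to Z^0\to\Spec(k)$. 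You use the same underlying input (linear reductivity of $\BG_m$), but routed through much heavier machinery.

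The concrete gaps in your version are these. First, the opening reduction via Sumihiro's theorem (\remref{r:locally linear}) does not cover the stated generality: the proposition is about algebraic spaces of finite type, and Sumihiro applies only to (separated, normal) \emph{schemes} over an algebraically closed field; the paper's own discussion of Sommese's example shows that smooth algebraic spaces with $\BG_m$-action genuinely escape the locally linear world, so this step cannot be patched by a remark. Second, the equivariant \'etale slice theorem you invoke (a smooth $\BG_m$-space is \'etale-locally near a fixed point its tangent representation) is a substantial theorem in its own right -- Luna's theorem in the affine characteristic-zero case, and Alper--Hall--Rydh-level machinery in general -- and is arguably harder than the proposition; you flag it as the main obstacle but do not close it, and your fallback graded-ring argument still presupposes the affine reduction. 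Third, the sentence deducing smoothness of $Z^0$ and $Z^+$ from smoothness of $q^+$ via the retraction $q^+\circ i^+=\id$ is circular as phrased: a retract of a smooth space is smooth, but that requires already knowing $Z^+$ is smooth over $k$, which is not implied by smoothness of the morphism $q^+$ alone (take $Z^+=Z^0\times\BA^1$ with $Z^0$ singular). One needs an independent argument that $Z^0$ is smooth, exactly as the paper supplies. Your tangent-space computation $T_zZ^+=(T_zZ)^{\geq 0}$ and the propagation of smoothness from a neighborhood of $i^+(Z^0)$ to all of $Z^+$ via the $\BA^1$-action are both correct (the latter is the same argument as in \corref{c:contractive}), but they do not substitute for the missing local structure statement.
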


\begin{proof}
We will only prove that $q^+$ is smooth. (Smoothness of $Z^0$ can be proved similarly, and smoothness of $Z^+$ follows.)

\medskip

It suffices to check that $q^+$ is formally smooth. 
Let $R$ be a $k$-algebra and $\bar R=R/I$, where 
$I\subset R$ is an ideal with $I^2=0$. Let $\bar f :\Spec(\bar R)\times \BA^1\to Z$ be a $\BG_m$-equivariant morphism and let  
$\bar f_0:\Spec(\bar R)\to Z^0$ 
denote the restriction of $\bar f$ to 
$$\Spec(\bar R)\overset{\on{id}\times \{0\}}\hookrightarrow \Spec(\bar R)\times \BA^1\,.$$ 
Let $\varphi :\Spec(R)\to Z^0$ be a morphism extending 
$\bar f_0\,$. We have to extend  $\bar f$  to a $\BG_m$-equivariant morphism $f :\Spec(R)\times\BA^1\to Z$ so that $f_0=\varphi$, where
$f_0:=f|_{\Spec(R)}$. 

\medskip

Since $Z$ is smooth, we can find a \emph{not-necessarily equivariant} morphism 
$f :\Spec(\bar R)\times \BA^1\to Z$ extending $\bar f$ with $f_0=\varphi$. Then standard arguments show 
that the obstruction to existence of a $\BG_m$-equivariant $f$ with the required properties belongs to
$$H^1(\BG_m\, ,M), \quad 
M:=H^0(\Spec(\bar R)\times\BA^1\, ,\bar f^*(\Theta_Z)\otimes\CJ )\underset{\bar R}\otimes I,$$
where $\Theta_Z$ is the tangent bundle of $Z$ and $\CJ\subset\CO_{\Spec(R)\times\BA^1}$ 
is the ideal of the zero section. But $H^1$ of $\BG_m$ with coefficients in any $\BG_m$-module is zero.
\end{proof}

\ssec{Repellers}    \label{ss:repeller}

\sssec{}

Set $\BA^1_-:=\BP^1-\{\infty\}$; this is a monoid with respect to multiplication containing $\BG_m$ as a subgroup. 
One has an isomorphism of monoids
\begin{equation}   \label{e:inversion}
\BA^1\iso \BA^1_-\, , \quad\quad t\mapsto t^{-1}.
\end{equation}

\sssec{}

Given a space $Z$, equipped with a $\BG_m$-action, we set
\begin{equation}   \label{e:repel}
Z^-:=\GMaps(\BA^1_-,Z).
\end{equation}

\begin{defn}
$Z^-$ is called the \emph{repeller} of $Z$.
\end{defn}

\sssec{}

Just as in \secref{sss:structures} one defines an 
%following structures on $Z^-$: 
action of the monoid $\BA^1_-$ on $Z^-$ extending the action of $\BG_m\,$, a
$\BG_m$-equivariant morphism $p^-:Z^-\to Z$, and  $\BA^1_-$-equivariant morphisms  
$q^-:Z^-\to Z^0$ and $i^-:Z^0\to Z^-$ (where $Z^0$ is equipped with the trivial $\BA^1_-$-action).
One has $q^-\circ i^-=\id_{Z^0}\,$, and the composition $p^-\circ i^-$ is equal to the canonical embedding $Z^0\mono Z$.

\medskip

Using the isomorphism \eqref{e:inversion}, one can identify $Z^-$ with the attractor for the inverse 
action of $\BG_m$ on $Z$ (this identification is $\BG_m$-\emph{anti}-equivariant). Thus the results on attractors from 
Sects.~\ref{sss:attractors-affine} and \ref{ss:Results_attractors} imply similar results for repellers.

\medskip

In particular, if $Z$ is the spectrum of a $\BZ$-graded algebra $A$ then $Z^-$ canonically identifies with 
$\Spec (A^-)$,  where $A^-$ is the maximal $\BZ_-$-graded quotient algebra of $A$.

\ssec{Attractors and repellers}

In this subsection we let $Z$ be an algebraic space of finite type, equipped with an action of $\BG_m$. 

\sssec{}

First, we claim:

\begin{lem}  \label{l:closed} 
The morphisms $i^{\pm}:Z^0\to Z^{\pm}$ are closed embeddings.
\end{lem}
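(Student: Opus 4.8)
The statement to prove is \lemref{l:closed}: the morphisms $i^{\pm}:Z^0\to Z^{\pm}$ are closed embeddings. By the identification of $Z^-$ with the attractor for the inverse $\BG_m$-action (see \secref{ss:repeller}), it suffices to treat $i^+:Z^0\to Z^+$. The plan is to factor $i^+$ through a morphism whose closedness we already know, and then use that closed embeddings satisfy a standard two-out-of-three/cancellation property.

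\textbf{Key steps.} First I would recall from \secref{sss:structures}(iii) that $q^+\circ i^+=\id_{Z^0}$ and $p^+\circ i^+$ equals the canonical embedding $Z^0\hookrightarrow Z$, which is a closed embedding by \propref{p:Z^0closed}. Since $q^+:Z^+\to Z^0$ is a (retraction, hence) section-admitting morphism, the composite $q^+\circ i^+=\id_{Z^0}$ being an isomorphism immediately shows that $i^+$ is a monomorphism, and moreover that $i^+$ identifies $Z^0$ with a ``sub"space of $Z^+$; the content of the lemma is that this subspace is in fact \emph{closed}. For this I would use the morphism $p^+:Z^+\to Z$: by \propref{p:unrami} it is unramified, but what I really need is just that it is separated (being a morphism of algebraic spaces of finite type, with $Z^+$ representable by \thmref{t:attractors}(i), its diagonal is a closed embedding). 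Given a separated morphism $p^+$, the equality $p^+\circ i^+=(\text{closed embedding }Z^0\hookrightarrow Z)$ forces $i^+$ to be a closed embedding: indeed $i^+$ is the base change of the diagonal $Z^+\to Z^+\times_Z Z^+$ along an appropriate map, or more simply one invokes the general fact that if $g\circ h$ is a closed embedding and $g$ is separated then $h$ is a closed embedding. I would spell out this last cancellation: $h$ is the composite $Z^0\xrightarrow{(\,i^+,\,\text{incl}\,)} Z^+\times_Z Z^0 \to Z^+$, where the first map is the graph of $i^+$ over $Z$ (a closed embedding because $p^+$ is separated) and the second is the base change of the closed embedding $Z^0\hookrightarrow Z$ along $p^+$ (hence a closed embedding); so $i^+$ is a composite of two closed embeddings, hence a closed embedding.

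\textbf{Alternative in the affine/locally linear case.} If one wants a hands-on check in the case where $Z$ is covered by $\BG_m$-stable affine opens (which via \lemref{l:U^+}(i) reduces everything, since $i^+$ and $Z^0$ are compatible with restriction to such opens), then for $Z=\Spec(A)$ with $A$ a $\BZ$-graded algebra one has from \secref{sss:attractors-affine} the surjections $A^0\twoheadleftarrow A^+\twoheadleftarrow A^0$ whose composite is $\id_{A^0}$; the map $A^+\to A^0$ (which is $(i^+)^\sharp$) is surjective, so $i^+$ is a closed embedding of affine schemes, and the general case then follows by gluing over a $\BG_m$-stable affine cover and using that $Z^0\to Z$, hence $Z^0\to Z^+$, is determined locally.

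\textbf{Main obstacle.} The only subtlety is that $Z$ (hence $Z^+$) need not be separated, so I cannot simply say ``$Z^+$ separated $\Rightarrow$ every section of $q^+$ is a closed embedding.'' The resolution is that I do not need $Z^+$ to be separated over $k$; I only need $p^+:Z^+\to Z$ to be separated, and a morphism of quasi-separated algebraic spaces of finite type is automatically quasi-separated, while one checks separatedness of $p^+$ directly (e.g.\ its diagonal $Z^+\to Z^+\times_Z Z^+$ sits inside $\GMaps(\BA^1,Z)\to\GMaps(\BA^1,Z)\times_{\GMaps(\BG_m,Z)}\GMaps(\BA^1,Z)$, which is a monomorphism, and one upgrades ``mono + finite type'' appropriately, or simply cites that $p^+$ is unramified by \propref{p:unrami} and unramified morphisms are separated). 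Granting separatedness of $p^+$, the cancellation argument above finishes the proof.
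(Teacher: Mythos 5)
Your overall strategy (write $i^+$ as a map whose composite with a separated morphism is a closed embedding, then apply the cancellation lemma ``$g\circ h$ closed embedding $+$ $g$ separated $\Rightarrow$ $h$ closed embedding'') is exactly the right shape, and your statement and proof of that cancellation lemma are fine. But you applied it to the wrong morphism, and the resulting gap is real: your argument needs $p^+:Z^+\to Z$ to be separated, and none of the justifications you offer for this is valid. ``Morphism of algebraic spaces of finite type'' does not imply separated. ``The diagonal is a monomorphism'' is vacuous --- the diagonal of \emph{any} morphism is a monomorphism, and ``mono $+$ finite type'' does not upgrade to closed. Most seriously, \emph{unramified does not imply separated}: the diagonal of an unramified morphism is an open immersion, not a closed one (e.g.\ the \'etale map from the affine line with doubled origin to the affine line is unramified but not separated). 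The paper never establishes separatedness of $p^+$ in general (Proposition \ref{p: p^+} only gives it when $Z$ itself is separated, via $p^+$ being a monomorphism), so as written your proof does not close.

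The fix is the route you mention in your first paragraph and then abandon: use $q^+$ instead of $p^+$. By Theorem \ref{t:attractors}(ii) the morphism $q^+:Z^+\to Z^0$ is \emph{affine}, hence separated, and $q^+\circ i^+=\id_{Z^0}$ is certainly a closed embedding; your own cancellation lemma (or the standard fact that a section of a separated morphism is a closed embedding) then gives the claim immediately. This is precisely the paper's proof. Your affine/locally linear computation via $A^0\hookrightarrow A^+\twoheadrightarrow A^0$ is correct but only covers the locally linear case, so it cannot substitute for the general argument.
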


\begin{proof}  
It suffices to consider $i^+$.  By Theorem~\ref{t:attractors}(ii), the morphism $q^+:Z^+\to Z^0$ is separated. One has $q^+\circ i^+=\id_{Z^0}\,$. 
So $i^+$ is a closed embedding.
\end{proof}

\sssec{}

Now consider the fiber product $Z^+\underset{Z}\times Z^-$ formed using the maps
$p^{\pm}:Z^{\pm}\to Z$.

\begin{prop}  \label{p:Cartesian} 
The map
\begin{equation}    \label{e:open-closed}
j:=(i^+,i^-):Z^0\to Z^+\underset{Z}\times Z^-
\end{equation}
is both an open embedding and a closed one (i.e., is the embedding of a union of some connected components). 
\end{prop}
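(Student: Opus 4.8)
The plan is to show that $j=(i^+,i^-)$ is a monomorphism which is simultaneously an open and a closed immersion. Closedness is the easy part: by \lemref{l:closed} the map $i^+:Z^0\to Z^+$ is a closed embedding, and $j$ is the base change of $i^+$ along the projection $Z^+\times_Z Z^-\to Z^+$, hence $j$ is a closed embedding as well. So the entire content is to prove that $j$ is also an \emph{open} embedding, equivalently (since we already know it is a closed immersion, in particular a monomorphism of finite presentation) that $j$ is \emph{\'etale}, i.e. that its relative cotangent complex vanishes, i.e. that $j$ induces an isomorphism on tangent spaces at every geometric point of $Z^0$.

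So I would reduce to $k$ algebraically closed and pick a geometric point $z\in Z^0(k)$. I need to identify $T_z(Z^+\times_Z Z^-)$ with $T_zZ^0$. By the description of tangent spaces to attractors in \eqref{e:TZ+l}, applied to the constant map $f\equiv z:\BA^1\to Z$ (which is the one corresponding to the point $i^+(z)$, by \lemref{l:constant}), we get $T_{i^+(z)}Z^+=\Hom_{\BG_m}(T_z^*Z\otimes_k\CO_{\BA^1},\CO_{\BA^1})$; decomposing $T_zZ$ into its $\BG_m$-weight spaces $T_zZ=\bigoplus_n (T_zZ)_n$, one finds $T_{i^+(z)}Z^+=\bigoplus_{n\ge 0}(T_zZ)_n$, with $p^+$ inducing on tangent spaces the inclusion $\bigoplus_{n\ge 0}(T_zZ)_n\hookrightarrow \bigoplus_n(T_zZ)_n=T_zZ$. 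Symmetrically, $T_{i^-(z)}Z^-=\bigoplus_{n\le 0}(T_zZ)_n$ with $p^-$ the obvious inclusion. Therefore the fiber product of tangent spaces $T_{i^+(z)}Z^+\times_{T_zZ}T_{i^-(z)}Z^-$ is exactly $(\bigoplus_{n\ge 0})\cap(\bigoplus_{n\le 0})=(T_zZ)_0=(T_zZ)^{\BG_m}$, which by \lemref{l:TZ0} is precisely $T_zZ^0$. Hence $j$ is unramified at $z$; combined with the fact that it is a closed immersion of finite presentation whose source $Z^0$ is "the right size" at $z$, this forces $j$ to be an open immersion near $z$. (Concretely: a closed immersion that is also unramified is an open immersion onto a union of connected components — the closed subspace is cut out, \'etale-locally, by equations that also define an open subspace.)

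The main obstacle is making the tangent-space computation rigorous without a flatness or representability hypothesis stronger than what \thmref{t:attractors} gives — i.e. one must be careful that the fiber product $Z^+\times_Z Z^-$ really has the expected tangent space, since the maps $p^\pm$ need not be immersions when $Z$ is not separated. But the deformation-theoretic computation above only uses the functor-of-points description \eqref{e:TZ+l} of $T Z^\pm$ and the fact that $T(Z^+\times_Z Z^-)=TZ^+\times_{TZ}TZ^-$ for any fiber product of algebraic spaces, so it goes through verbatim; the genuine subtlety is purely the passage from "unramified closed immersion" to "open-and-closed immersion," for which one invokes that an unramified morphism which is a monomorphism is, after the closed-immersion reduction, an open immersion onto a connected-component union. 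The remaining bookkeeping (reducing to $k=\bar k$, to residue field $k$ at the chosen point) is routine and I would not spell it out in detail.
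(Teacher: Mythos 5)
The closedness half of your argument is essentially the paper's (the paper writes $j$ as $Z^0=Z^0\underset{Z}\times Z^0\to Z^+\underset{Z}\times Z^-$ and invokes \lemref{l:closed}), and your Bia{\l}ynicki-Birula--style weight-space computation of $T_{i^+(z)}Z^+=\bigoplus_{n\ge 0}(T_zZ)_n$ is correct. But the final step of the openness half is a genuine gap. The parenthetical claim ``a closed immersion that is also unramified is an open immersion onto a union of connected components'' is false: \emph{every} closed immersion is unramified. Even the stronger statement you actually use --- a closed immersion inducing isomorphisms on tangent spaces at all geometric points of the source is an open immersion --- fails without flatness or smoothness hypotheses. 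For a closed immersion with ideal $I$, the tangent map at $x$ being an isomorphism only says $I_x\subset m_{Y,x}^2$, not $I_x=0$; e.g.\ $\Spec k[t]/(t^2)\hookrightarrow \Spec k[t]/(t^3)$ (or $\hookrightarrow\BA^1$) is a closed immersion inducing an isomorphism on tangent spaces at the unique point of the source, yet is not an open immersion. Since $Z$ is an arbitrary (possibly non-reduced, singular) algebraic space, nothing a priori prevents $Z^+\underset{Z}\times Z^-$ from carrying exactly this kind of higher-order thickening along $Z^0$, invisible to $\Spec k[\varepsilon]/(\varepsilon^2)$-points. To repair the argument along your lines you would have to verify the full infinitesimal lifting criterion (formal \'etaleness) for $j$, i.e.\ control liftings over arbitrary square-zero extensions, not just tangent vectors; this is genuinely more than the displayed computation and is not routine bookkeeping.

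For comparison, the paper sidesteps deformation theory entirely: it identifies $Z^+\underset{Z}\times Z^-$ with $\GMaps(\BP^1,Z)$ and $Z^0$ with $\GMaps(\on{pt},Z)$, exhibits $j$ as a base change of the map $\MMaps(\on{pt},Z)\to\MMaps(\BP^1,Z)$, and uses the elementary fact that constant maps form an open subfunctor of $\MMaps(\BP^1,Z)$ (because $\BP^1$ is proper, connected and reduced). This works on the level of the full functors of points, which is exactly the information your tangent-space argument truncates away.
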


\begin{rem}
If $Z$ is affine then $j$ is an isomorphism (this immediately follows from the explicit description of $Z^{\pm}$ in the affine case, see 
Sects. \ref{sss:attractors-affine} and \ref{ss:repeller}). 

\medskip

In general, $j$ is not necessarily an isomorphism. To see this, note that by \eqref{e:attr} and \eqref{e:repel}, we have
\begin{equation}   \label{e:fiberprod}
Z^+\underset{Z}\times Z^-=\GMaps(\BP^1,Z)
\end{equation}
(where $\BP^1$ is equipped with the usual $\BG_m$-action), and a $\BG_m$-equivariant map $\BP^1\to Z$ does not have to be constant, in general.
\end{rem}

\begin{proof}[Proof of \propref{p:Cartesian}]

We will give a proof in the case when $Z$ is a scheme; the case of an arbitrary algebraic space is treated in \cite[Prop. 1.6.2]{Dr}. 

\medskip

Writing $j$ as 
\[
Z^0=Z^0\underset{Z}\times Z^0\,\overset{(i^+,i^-)}\longrightarrow\, Z^+\underset{Z}\times Z^-,
\]
and using \lemref{l:closed}, we see that $j$ is a closed embedding.

\medskip

To prove that $j$ is an open embedding, we note that the following diagram is Cartesian:

$$
\CD
Z^0  @>{\sim}>> \MMaps^{\BG_m}(\on{pt},Z)    @>>>   \MMaps(\on{pt},Z)    \\
@V{j}VV      @VVV     @VVV     \\
Z^+\underset{Z}\times Z^-  @>{\sim}>>    \MMaps^{\BG_m}(\BP^1,Z)    @>>>   \MMaps(\BP^1,Z). 
\endCD
$$

Now, the required result follows from the next (easy) lemma:

\begin{lem}
For a scheme $Z$, the map
$$Z=\MMaps(\on{pt},Z)  \to \MMaps(\BP^1,Z)$$
induced by the projection $\BP^1\to \on{pt}$ is an open embedding.
\end{lem}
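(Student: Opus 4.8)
The plan is to show that the natural map $Z \to \MMaps(\BP^1, Z)$, which on $S$-points sends $f : S \to Z$ to the constant-along-$\BP^1$ map $S \times \BP^1 \to S \to Z$, is an open embedding. First I would note that it is automatically a monomorphism with a retraction: composing with the section $\MMaps(\BP^1,Z) \to \MMaps(\on{pt},Z) = Z$ induced by any point $\on{pt} \to \BP^1$ (say $\infty \in \BP^1$) recovers the identity, since a constant map restricted to a point is just its value. So the map is a split monomorphism, hence a locally closed embedding provided $\MMaps(\BP^1,Z)$ is reasonable; but to conclude openness we need to identify the image functorially.

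The key point is that $\MMaps(\BP^1, Z)(S) = \Maps(S \times \BP^1, Z)$ and we must show that the subfunctor of those $g : S \times \BP^1 \to Z$ that factor through the projection $S \times \BP^1 \to S$ is represented by an open subscheme of $\MMaps(\BP^1,Z)$. Here I would use properness of $\BP^1$ over $k$: a morphism $g : S \times \BP^1 \to Z$ is constant along the fibers of $p : S \times \BP^1 \to S$ if and only if the relative differential $dg$ vanishes on vertical tangent vectors, i.e. if and only if the composite $g^* \Omega^1_{Z/k} \to \Omega^1_{S\times\BP^1/k} \to \Omega^1_{S\times\BP^1/S}$ is zero — no, more robustly, $g$ factors through $p$ iff $g \circ (\on{id}_S \times \iota) \colon S \to Z$ for a fixed point $\iota : \on{pt}\to\BP^1$ agrees with $g$ after composing with $p$, i.e. iff $g = (g|_{S\times\{\infty\}}) \circ p$. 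The locus where two morphisms $S \times \BP^1 \rightrightarrows Z$ agree is, since $S \times \BP^1 \to S$ is proper (and $Z$ is a scheme, hence has quasi-affine — in any case separated enough — diagonal is not automatic, so one must be slightly careful), representable by a closed subscheme of $S$... which would give a closed embedding, not an open one. The resolution is to instead invoke the previous lemma of the paper (\propref{p:Cartesian}'s proof context): one can also run the argument directly with $\BA^1 = \BP^1 \smallsetminus \{\infty\}$ and use that $\MMaps(\on{pt},Z) \to \MMaps(\BA^1,Z)$ is an \emph{open} embedding because $Z \to \MMaps(\BA^1,Z) = \MMaps(\on{pt},Z) \times \ldots$ — no.

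Let me reorganize. The clean approach: factor the map as $\MMaps(\on{pt},Z) \to \MMaps(\BP^1,Z) \to \MMaps(\on{pt}, Z)$ (the second arrow via $\{\infty\} \hookrightarrow \BP^1$), with composite the identity. So it suffices to show the evaluation-at-$\infty$ map $\ev_\infty : \MMaps(\BP^1, Z) \to Z$ is, Zariski-locally on the target already — no. Instead: it suffices to show that the map $\MMaps(\on{pt},Z) \to \MMaps(\BP^1,Z)$ is \emph{formally étale} and \emph{surjective onto an open} — but actually the cleanest is to check it is an open immersion by the infinitesimal criterion plus constructibility, OR to use the following concrete fact about $\BP^1$: for any $S$ and any $g : S\times\BP^1 \to Z$, the set of $s \in S$ such that $g|_{\{s\}\times\BP^1}$ is constant is \emph{open} in $S$. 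This is because $\BP^1$ is proper and connected: the non-constant locus is the image under the proper map $p$ of the closed non-vanishing locus of the vertical derivative $dg^{\on{vert}}$ (using $\Omega^1_{Z/k}$ coherent and pulled back), hence closed in $S$; and on the constant locus, $g$ is determined by $\ev_\infty$. So the image of our map, as a subfunctor of $\MMaps(\BP^1,Z)$, is the locus where the universal map $g^{\on{univ}} : \MMaps(\BP^1,Z)\times\BP^1 \to Z$ is fiberwise constant, which by the above is open, and on it the map is inverse to $\ev_\infty$ restricted there, hence an isomorphism onto an open subspace.

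The main obstacle I expect is the bookkeeping around separatedness of $Z$ and representability/coherence issues when $Z$ is not assumed separated: the "agreement locus of two maps out of a proper scheme" is only obviously representable by a \emph{closed} subscheme when the target is separated. The fix is to avoid comparing two maps directly and instead use the vanishing of the vertical derivative $g^*\Omega^1_{Z/k} \to \Omega^1_{S\times\BP^1/S}$ together with the fact that a map from the proper connected curve $\BP^1$ (over a field, or over the points of $S$) with vanishing differential to a \emph{scheme} is constant — this last statement does hold for arbitrary schemes (a morphism from a reduced connected scheme with zero differential to any scheme, when the source is proper over a field, lands in a single point), and the "vertical derivative vanishes" locus is closed since $\Omega^1_{Z/k}$ is coherent; its complement is then open, and its image under the proper projection $p$ being closed shows the constant locus is open in $S$. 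I would write this out, and then note that on this open locus, $ \ev_\infty$ provides the inverse, completing the proof. The case distinction for $Z$ merely a scheme versus algebraic space can be deferred exactly as the surrounding text does (citing \cite{Dr} for the general algebraic-space case).
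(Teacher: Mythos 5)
The paper labels this lemma ``easy'' and gives no proof, so I will assess your argument on its own merits. Your overall strategy is the right one (identify the image subfunctor with the fiberwise-constancy locus, show that locus is open using properness of $\BP^1$, and invert via $\ev_\infty$), but the implementation has a genuine gap: the criterion you use for constancy --- vanishing of the vertical derivative $g^*\Omega^1_{Z/k}\to \Omega^1_{S\times \BP^1/S}$ --- is false in positive characteristic (the relative Frobenius $\BP^1\to\BP^1$ has identically zero differential but is not constant), and your auxiliary claim that a map with zero differential from a reduced connected proper curve to a scheme lands in a single point fails for the same reason. This matters here: the lemma sits inside Sect.~1, and the paper explicitly works over an arbitrary ground field in Sects.~1--2, so a characteristic-zero-only argument does not prove the statement as used.

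There is a second, independent gap even where the derivative argument applies: knowing that $g|_{\{s\}\times\BP^1}$ is \emph{set-theoretically} constant for all $s$ in an open $W\subset S$ does not by itself show that $g|_{W\times\BP^1}$ factors through the projection $W\times\BP^1\to W$ as a morphism of schemes (which is what membership in the image subfunctor requires, in particular over non-reduced test schemes); your phrase ``on the constant locus, $g$ is determined by $\ev_\infty$'' asserts the conclusion rather than proving it. Both gaps are closed simultaneously by the standard argument, which avoids differentials entirely: if $g(\{s\}\times\BP^1)$ is a single point, choose an affine open $V\subset Z$ containing it; since the projection $S\times\BP^1\to S$ is closed (properness of $\BP^1$), there is an open $U\ni s$ with $g(U\times\BP^1)\subset V$; and then $\Gamma(U\times\BP^1,\O)=\Gamma(U,\O)$ (because $H^0(\BP^1,\O)=k$, plus base change) forces $g|_{U\times\BP^1}$ to factor through $U$, the factorization being $\ev_\infty$. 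This shows at once that the constancy locus is open and that $g$ factors scheme-theoretically over it; one then only has to add the easy converse that if $g_T$ factors through $T$ for some $T\to S$, then $T\to S$ lands in that locus. I recommend replacing the differential argument with this one.
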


\end{proof}

\begin{cor} \label{c:contractive}  \hfill

\smallskip

\noindent{\em(i)} If the map $p^+:Z^+\to Z$ is an isomorphism then so are the maps 
$Z^0\overset{i^-}\longrightarrow Z^-\overset{q^-}\longrightarrow Z^0$.

\smallskip

\noindent{\em(ii)} If the map $p^-:Z^-\to Z$ is an isomorphism then so are the maps 
$Z^0\overset{i^+}\longrightarrow Z^+\overset{q^+}\longrightarrow Z^0$.
\end{cor}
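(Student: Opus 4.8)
The plan is to deduce part~(ii) from part~(i): replacing the $\BG_m$-action on $Z$ by its inverse interchanges $Z^+$ with $Z^-$, $p^+$ with $p^-$, $i^+$ with $i^-$, and $q^+$ with $q^-$ (cf.\ \secref{ss:repeller}), so it is enough to prove~(i). Thus I assume that $p^+\colon Z^+\to Z$ is an isomorphism and must show that $i^-\colon Z^0\to Z^-$ and $q^-\colon Z^-\to Z^0$ are isomorphisms. Since $q^-\circ i^-=\id_{Z^0}$, it suffices to prove that $i^-$ is an isomorphism, for then $q^-=(i^-)^{-1}$.

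The first step is to show that $i^-$ is an open and closed embedding. The square with corners $Z^+\underset{Z}\times Z^-$, $Z^-$, $Z^+$, $Z$ (and edges $p^+$, $p^-$) is Cartesian, so the projection $Z^+\underset{Z}\times Z^-\to Z^-$, being the base change of $p^+$ along $p^-$, is an isomorphism. Composing it with the map $j=(i^+,i^-)\colon Z^0\to Z^+\underset{Z}\times Z^-$ of \propref{p:Cartesian} recovers $i^-$ (its second component). As $j$ is simultaneously an open and a closed embedding by \propref{p:Cartesian}, the same holds for $i^-$; equivalently, $i^-$ identifies $Z^0$ with a union of connected components of $Z^-$.

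It remains to see that $i^-$ is surjective, since an open embedding that is surjective is an isomorphism. Suppose not, and write $Z^-=i^-(Z^0)\sqcup W$ with $W$ the (open and closed) complement, so $W\neq\emptyset$; fix a nonempty connected closed subset $C\subseteq W$. Next I would invoke the action $m\colon\BA^1_-\times Z^-\to Z^-$ of the monoid $\BA^1_-$ on $Z^-$ from \secref{ss:repeller}: one checks directly, just as in \secref{sss:structures}, that $m$ restricts to $\id_{Z^-}$ over the unit $1\in\BA^1_-$ and to $i^-\circ q^-$ over the absorbing element $0\in\BA^1_-$. Since $\BA^1_-\cong\BA^1$ is geometrically connected, $\BA^1_-\times C$ is connected, hence so is the image of $m|_{\BA^1_-\times C}$ in $Z^-$; but this image contains $C\subseteq W$ (the image over $1\in\BA^1_-$) as well as the nonempty set $i^-(q^-(C))\subseteq i^-(Z^0)$ (the image over $0\in\BA^1_-$), contradicting the disjoint decomposition $Z^-=i^-(Z^0)\sqcup W$. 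Hence $W=\emptyset$, so $i^-$ is surjective, and the proof is complete.

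I do not anticipate a genuine obstacle. The one place where the hypothesis that $p^+$ is an isomorphism is actually used --- rather than just the unconditional \propref{p:Cartesian}, which by itself only yields that $i^-$ is a \emph{closed} embedding (as in \lemref{l:closed}) --- is the identification $Z^+\underset{Z}\times Z^-\simeq Z^-$, which is what upgrades $i^-$ to an \emph{open} embedding; the final surjectivity argument via the $\BA^1_-$-action is then routine.
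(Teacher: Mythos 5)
Your proof is correct and follows essentially the same route as the paper's: \propref{p:Cartesian} together with base change along the isomorphism $p^{\pm}$ shows that $i^{\mp}$ is an open embedding, and the monoid action is then used to get surjectivity. The only cosmetic difference is in the last step, where the paper fixes a point $\zeta$ and observes that $\{t\in\BA^1\mid t\cdot\zeta\in i^+(Z^0)\}$ is open, $\BG_m$-stable and contains $0$ (hence is all of $\BA^1$), whereas you run the equivalent connectedness argument on $\BA^1_-\times C$.
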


\begin{proof}
Let us prove (ii). By \propref{p:Cartesian}, the morphism $i^+:Z^0\to Z^+$ is an open embedding. 
It remains to show that any point $\zeta\in Z^+$ is contained in $i^+(Z^0)$. Set 
\[
U_{\zeta}:=\{t\in\BA^1\,|\, t\cdot\zeta\in i^+(Z^0)\}.
\]
We have to show that $1\in U_{\zeta}$. But $U_{\zeta}$ is an open $\BG_m$-stable subset of $\BA^1$ containing $0$, so $U_{\zeta}=\BA^1$.
\end{proof}

\section{Geometry of $\BG_m$-actions: the key construction}  \label{s:deg}

We keep the conventions and notation of \secref{s:actions}. The goal of this section is, given an algebraic space $Z$ equipped with
a $\BG_m$-action, to study a certain canonically defined algebraic space $\wt{Z}$, equipped with a morphism 
$\wt{Z}\to\BA^1\times Z\times Z$, such that for $t\in\BA^1-\{0\}$ the fiber $\wt{Z}_t$ equals
the graph of the map $t:Z\iso Z$, and the fiber $\wt{Z}_0\,$, corresponding to $t=0$, equals $Z^+\underset{Z^0}\times Z^-$.

\medskip

As was mentioned in \secref{sss:behind}, the space $\wt{Z}$ is the main geometric ingredient in the proof of 
\thmref{t:Braden adj}. However, the reader can skip this section now and return to it when the time comes.  

\medskip

The main points of this section are following.  In \secref{ss:tilde Z} we define the space $\wt{Z}$ and the main pieces of structure on it 
(e.g., the morphism $\wt{p}:\wt{Z}\to\BA^1\times Z\times Z$ and the action of $\BG_m\times \BG_m$ on $\wt{Z}$). In \secref{ss:repr of inter}
we address the question of representability of $\wt{Z}$. In \secref{ss:fiber products} we prove 
\propref{p:2open embeddings}, which plays a key role in Sect.~\ref{s:Verifying}. 

\ssec{A family of hyperbolas}  \label{ss:hyperb}

\sssec{}   \label{sss:family of hyperbolas}   
Set $\BX:=\BA^2=\Spec k[\tau_1,\tau_2]$. Throughout the paper equip $\BX$ with the structure of a scheme over 
$\BA^1$, defined by the map
\[
\BA^2\to\BA^1 ,\quad (\tau_1,\tau_2)\mapsto \tau_1\cdot \tau_2\, .
\]

Let $\BX_t$ denote the fiber of $\BX$ over $t\in\BA^1$; in other words, $\BX_t\subset\BA^2$ is the curve defined by the equation 
$\tau_1\cdot \tau_2=t$. If $t\ne 0$ then $\BX_t$ is a hyperbola, while $\BX_0$ is the ``coordinate cross" $\tau_1\cdot \tau_2=0$. 

\medskip

One has $\BX_0=\BX_0^+\cup\BX_0^-$, where
\begin{equation}  \label{e:X0+-}
\BX_0^+:=\{(\tau_1,\tau_2)\in\BA^2\,|\,  \tau_2=0\}\, , \quad \BX_0^-:=\{(\tau_1,\tau_2)\in\BA^2\,|\,  \tau_1=0\}\, .
\end{equation}

\sssec{The schemes $\BX_S\,$}   \label{sss:X_S}
For any scheme $S$ over  $\BA^1$ set
\begin{equation}
\BX_S:=\BX \underset{\BA^1}\times S\, .
\end{equation}
If $S=\Spec(R)$ we usually write $\BX_R$ instead of $\BX_S\,$.

\sssec{}   \label{sss:structure X}
We will need the following pieces of structure on $\BX$:

\smallskip

\noindent{(i)} The projection $\BX\to \BA^1$ admits two canonically defined sections:
\begin{equation} \label{e:two sections}
\sigma_1(t)=(1,t) \text{ and } \sigma_2(t)=(t,1).
\end{equation}

\medskip

\noindent{(ii)} 
The scheme $\BX$ carries a tautological action of the monoid $\BA^1\times \BA^1$:
$$(\lambda_1,\lambda_2)\cdot (\tau_1,\tau_2)=(\lambda_1\cdot \tau_1,\lambda_2\cdot \tau_2)\,.$$

\medskip

This action covers the action of $\BA^1\times \BA^1$ on $\BA^1$, given by the product
map $\BA^1\times \BA^1\to \BA^1$ and the tautological action of $\BA^1$ on itself.

\medskip

\noindent{(iii)} In particular, we obtain an action of $\BG_m\times \BG_m$ on $\BX$.

\medskip

This action covers the action of $\BG_m\times \BG_m$ on $\BA^1$, given by the product
map $\BG_m\times \BG_m\to \BG_m$ and the tautological action of $\BG_m$ on $\BA^1$.

\sssec{The $\BG_m$-action on $\BX_S\,$}    \label{sss:theaction}
Consider the action of the anti-diagonal copy of $\BG_m$ on the scheme $\BX$ from \secref{sss:structure X}(iii). That is,
\begin{equation}   \label{e:hyperbolic}
\lambda\cdot (\tau_1,\tau_2):=(\lambda\cdot \tau_1,\; \lambda{}^{-1}\cdot \tau_2).
\end{equation}

\medskip

This action preserves the morphism $\BX\to\BA^1$, so for any scheme $S$ over $\BA^1$ one obtains
an action of $\BG_m$ on $\BX_S\,$.

\begin{rem}  \label{r:theaction}
If $S$ is over $\BA^1-\{ 0\}$, then $\BX_S$ is $\BG_m$-equivariantly isomorphic to $S\times \BG_m$
by means of either of the maps $\sigma_1$ or $\sigma_2$.
\end{rem}

\ssec{Construction of the interpolation}   \label{ss:tilde Z}

\sssec{}

Given a space $Z$ equipped with a $\BG_m$-action, define
$\wt{Z}$ to be the following space over $\BA^1$: for any scheme $S$ over $\BA^1$ we set
%the set of $\BA^1$-morphisms $S\to \wt{Z}$ is defined to be 
$$\Maps_{\BA^1} (S, \wt{Z}):=\Maps (\BX_S,Z)^{\BG_m},$$
where $\BX_S$ is acted on by $\BG_m$ as in \secref{sss:theaction}.

\medskip

In other words, for any scheme $S$, an $S$-point of $\wt{Z}$ is a pair consisting of a morphism 
$S\to\BA^1$ and a $\BG_m$-equivariant morphism $\BX_S\to Z$.

\medskip

Note that for any $t\in\BA^1(k)$ the fiber $\wt{Z}_t$ has the following description:
\begin{equation}
\wt{Z}_t=\GMaps (\BX_t\, ,Z).
\end{equation}

\sssec{}  \label{sss:tilde p}

The sections $\sigma_1$ and $\sigma_2$ (see \secref{sss:structure X}(i)) define morphisms
$$\pi_1:\wt{Z}\to Z \text{ and } \pi_2:\wt{Z}\to Z,$$
respectively. 

\medskip

Let
\begin{equation}   \label{e:tilde p}
\wt{p}:\wt{Z}\to \BA^1\times Z\times Z
\end{equation}
denote the morphism whose first component is the tautological projection $\wt{Z}\to \BA^1$, and 
the second and the third components are $\pi_1$ and $\pi_2$, respectively.

\sssec{}   \label{sss:action of G_m^2}

Note that the action of the group $\BG_m\times \BG_m$ on $\BX$ from \secref{sss:structure X}(iii)
gives rise to an action of
$\BG_m\times \BG_m$ on $\wt{Z}$. This action has the following properties:

\smallskip

\noindent{(i)} It is compatible with the canonical map $\wt{Z}\to \BA^1$
via the multiplication map $\BG_m\times \BG_m\to \BG_m$ and the \emph{inverse} of the  
canonical action of $\BG_m$ on $\BA^1$.

\smallskip

\noindent{(ii)} It is compatible with $\pi_1:\wt{Z}\to Z$ via
the projection on the first factor $\BG_m\times \BG_m\to \BG_m$
and the initial action of $\BG_m$ on $Z$.

\smallskip

\noindent{(iii)} It is compatible with $\pi_2:\wt{Z}\to Z$ via
the projection on the second factor $\BG_m\times \BG_m\to \BG_m$
and the \emph{inverse} of the initial action of $\BG_m$ on $Z$.

\sssec{}  \label{sss:anti-diagonal}
Restricting to the \emph{anti-diagonal} copy of $\BG_m\subset \BG_m\times \BG_m$
(i.e., $\lambda\mapsto (\lambda,\lambda^{-1})$), we obtain an action
of $\BG_m$ on $\wt{Z}$. (It is the same action as the one induced by the initial action of $\BG_m$ on $Z$).
This $\BG_m$-action on $\wt{Z}$ preserves the projection $\wt{Z}\to \BA^1$. 

\medskip

Both maps $\pi_1$ and $\pi_2$ are $\BG_m$-equivariant.

\sssec{}

For $t\in \BA^1$ let 
\begin{equation}   \label{e:tilde p_t}
\wt{p}_t:\wt{Z}_t\to Z\times Z
\end{equation}
denote the morphism induced by \eqref{e:tilde p} (as before, $\wt{Z}_t$ stands for the fiber of $\wt{Z}$ over $t$).

\medskip

It is clear that $(\wt{Z}_1,\wt{p}_1)$ identifies with $(Z,\Delta_Z:Z\to Z\times Z)$.
More generally, for $t\in \BA^1-\{0\}$ the pair $(\wt{Z}_t,\wt{p}_t)$ identifies with the graph of the map $Z\to Z$ given 
by the action of $t\in \BG_m\,$. 

\medskip

More precisely, we have:

\begin{prop}   \label{p:outside 0}
The morphism \eqref{e:tilde p} induces an isomorphism between  
$$\BG_m\underset{\BA^1}\times \wt{Z}$$ 
and the graph of the action morphism $\BG_m\times Z\to Z$.
\qed
\end{prop}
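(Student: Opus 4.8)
The plan is to unwind the definitions on both sides and produce mutually inverse maps, the key point being that over $\BA^1 - \{0\}$ the hyperbola family $\BX_S$ becomes trivial as a $\BG_m$-scheme.

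\smallskip

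First I would describe the target. The graph of the action morphism $\on{act}:\BG_m \times Z \to Z$ is the closed subspace of $\BG_m \times Z \times Z$ consisting of triples $(t, z_1, z_2)$ with $z_2 = t \cdot z_1$; equivalently, via the first projection and $\on{pr}_Z$ it is isomorphic to $\BG_m \times Z$, with the embedding into $\BG_m \times Z \times Z$ given by $(t, z) \mapsto (t, z, t\cdot z)$. So I must show that $\wt{p}$ restricted over $\BG_m \subset \BA^1$ lands in this locus and induces an isomorphism $\BG_m \underset{\BA^1}\times \wt{Z} \iso \BG_m \times Z$.

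\smallskip

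Next, the heart of the matter: for a scheme $S$ over $\BA^1$ that factors through $\BG_m = \BA^1 - \{0\}$, \remref{r:theaction} gives a $\BG_m$-equivariant isomorphism $\sigma_1 : S \times \BG_m \iso \BX_S$ (using the section $\sigma_1(t) = (1,t)$ and the fact that on $\BX_t$ with $t$ invertible the coordinate $\tau_1$ is invertible with $\tau_2 = t/\tau_1$). Hence
\[
\Maps_{\BA^1}(S, \wt{Z}) = \Maps(\BX_S, Z)^{\BG_m} \cong \Maps(S \times \BG_m, Z)^{\BG_m} = \Maps(S, Z),
\]
where the last equality is the defining adjunction for the fixed-point/attractor-type construction: a $\BG_m$-equivariant map $S \times \BG_m \to Z$ is the same as an arbitrary map $S \to Z$ (restrict to $S \times \{1\}$). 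This chain of natural bijections, functorial in $S$, exhibits $\BG_m \underset{\BA^1}\times \wt{Z} \cong \BG_m \times Z$ as spaces over $\BG_m$. It then remains to check that under this identification the morphism $\wt{p}$ becomes $(t, z) \mapsto (t, z, t\cdot z)$: the component $\pi_1$ is restriction along $\sigma_1$, which by construction of the identification is exactly $z$; the component $\pi_2$ is restriction along $\sigma_2(t) = (t,1)$, and under the equivariant isomorphism $\sigma_1$ the point $\sigma_2(t) \in \BX_t$ corresponds to $t \in \BG_m$ acting, so $\pi_2$ reads off $t \cdot z$. This identifies the image with the graph and finishes the proof.

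\smallskip

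I do not anticipate a serious obstacle here; the statement is essentially a bookkeeping exercise once \remref{r:theaction} is in hand. The one point requiring a little care is the precise bookkeeping of which $\BG_m$-action is used where — the ``hyperbolic'' anti-diagonal action \eqref{e:hyperbolic} on $\BX_S$, versus the tautological action of the target $\BG_m$ on $Z$ — and making sure the comparison isomorphism is compatible with all the structure maps simultaneously (the projection to $\BA^1$, $\pi_1$, $\pi_2$, and the residual $\BG_m$-action from \secref{sss:anti-diagonal}); but each of these is immediate from the formulas. If one wants to avoid choosing between $\sigma_1$ and $\sigma_2$, one can note the two resulting trivializations differ by the $\BG_m$-action parametrized by $t$, which is precisely what makes $\pi_2 = t \cdot \pi_1$.
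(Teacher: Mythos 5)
Your argument is correct and is precisely the unwinding the paper has in mind: the proposition is stated with an immediate \qed (no proof given), because once one trivializes $\BX_S$ over $\BG_m$ via $\sigma_1$ as in \remref{r:theaction}, the identification $\Maps_{\BA^1}(S,\wt{Z})\cong\Maps(S,Z)$ and the computation $\sigma_2(t)=t\cdot\sigma_1(t)$, hence $\pi_2=t\cdot\pi_1$, are exactly as you wrote. The only cosmetic caveat is that the graph need not be a \emph{closed} subspace of $\BG_m\times Z\times Z$ unless $Z$ is separated, but this plays no role in your argument, which only uses the graph as the image of the monomorphism $(t,z)\mapsto(t,z,t\cdot z)$.
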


\sssec{}

We are now going to construct a canonical morphism
\begin{equation} \label{e:0 fiber}
\wt{Z}_0\to Z^+\underset{Z^0}\times Z^-.
\end{equation}

Recall that $\wt{Z}_0=\GMaps (\BX_0\, ,Z)$ and 
$\BX_0=\BX_0^+\cup\BX_0^-$, where $\BX_0^+$ and $\BX_0^-$ are defined by formula~\eqref{e:X0+-}.
One has $\BG_m$-equivariant isomorphisms
\begin{equation}
\BA^1\iso\BX_0^+, \; \;  s\mapsto (s,0); \quad\quad\quad \BA^1_-\iso\BX_0^-,  \; \;  s\mapsto (0,s^{-1}).
\end{equation}
They define a morphism
$$\wt{Z}_0=\GMaps (\BX_0\, ,Z)\to\GMaps (\BX_0^+ ,Z)\iso\GMaps (\BA^1 ,Z)=Z^+$$
and a similar morphism $\wt{Z}_0\to Z^-$.

\medskip

By construction, the following diagram commutes: 
\begin{equation}    \label{e:over Z times Z}
\xymatrix{
\wt{Z}_0 \ar[d]^{}\ar[r]^{\wt{p}_0}& Z\times Z\\\
Z^+\underset{Z^0}\times Z^-\ar@{->}[r]^{}&Z^+\times Z^-.\ar[u]_{{p^+}\times {p^-}}
    }
\end{equation}

\sssec{}

We now claim:

\begin{prop}   \label{p:tilde Z_0}
Let $Z$ be a scheme. Then the map \eqref{e:0 fiber} is an isomorphism.
\end{prop}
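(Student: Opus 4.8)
The plan is to check the claim on $S$-points for an arbitrary test scheme $S$. Recall that $\wt{Z}_0=\GMaps(\BX_0,Z)$, so $\wt{Z}_0(S)=\Maps(\BX_0\times S,Z)^{\BG_m}$; recall also that $\BX_0=\Spec k[\tau_1,\tau_2]/(\tau_1\tau_2)$ is the union of the two coordinate axes $\BX_0^+$ and $\BX_0^-$ of \eqref{e:X0+-}, which meet along the origin $\on{pt}=\Spec k$, a closed subscheme of each. The geometric input is that $\BX_0$ is the pushout of $\BX_0^+\hookleftarrow\on{pt}\hookrightarrow\BX_0^-$ in the category of schemes: on coordinate rings this is the identity $k[\tau_1,\tau_2]/(\tau_1\tau_2)=k[\tau_1]\times_k k[\tau_2]$ (the fiber product along the two maps $\tau_i\mapsto 0$), and a gluing of schemes along a common closed subscheme is always their pushout. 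Since $k$ is a field, the defining short exact sequence $0\to k[\tau_1,\tau_2]/(\tau_1\tau_2)\to k[\tau_1]\oplus k[\tau_2]\to k\to 0$ (last map $(f,g)\mapsto f(0)-g(0)$) remains exact after $-\otimes_k R$ for any $R$, i.e. after $-\times_{\Spec k}S$ for any $S$; so $\BX_0\times S$ is again the pushout of $\BX_0^+\times S\hookleftarrow\on{pt}\times S\hookrightarrow\BX_0^-\times S$, still along closed immersions.

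Next I would apply $\Maps(-,Z)$, using that $Z$ is a \emph{scheme}. By the universal property of this pushout,
\[
\Maps(\BX_0\times S,Z)=\Maps(\BX_0^+\times S,Z)\underset{\Maps(\on{pt}\times S,Z)}{\times}\Maps(\BX_0^-\times S,Z),
\]
and since all the maps in the pushout square are $\BG_m$-equivariant, the same identity holds after passing to $\BG_m$-equivariant maps. Under the $\BG_m$-equivariant identifications $\BX_0^+\times S\cong S\times\BA^1$, $\BX_0^-\times S\cong S\times\BA^1_-$ (with $\BG_m$ acting trivially on $S$) and $\on{pt}\times S\cong S\times\on{pt}$, the three terms become $Z^+(S)$, $Z^-(S)$ and $Z^0(S)$ respectively (using $Z^+=\GMaps(\BA^1,Z)$, $Z^-=\GMaps(\BA^1_-,Z)$, $Z^0=\GMaps(\on{pt},Z)$). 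This gives a bijection $\wt{Z}_0(S)\cong Z^+(S)\underset{Z^0(S)}{\times}Z^-(S)$, natural in $S$.

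It remains to identify this bijection with the morphism \eqref{e:0 fiber}. Its two components are the restrictions of a $\BG_m$-equivariant map $\BX_0\times S\to Z$ to $\BX_0^+\times S$ and to $\BX_0^-\times S$ — exactly the two maps $\wt{Z}_0\to Z^+$ and $\wt{Z}_0\to Z^-$ used to construct \eqref{e:0 fiber} — while the two maps along which the target fiber product is formed are the restrictions to the common $\BG_m$-fixed point $\on{pt}\subset\BX_0$, namely $q^+$ and $q^-$; this is precisely the commutativity recorded in diagram \eqref{e:over Z times Z}. Hence \eqref{e:0 fiber} is an isomorphism of spaces, and, both sides being representable (by \thmref{t:attractors} and \propref{p:Z^0closed}), an isomorphism of schemes.

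The step that needs care — and the reason the statement is restricted to schemes $Z$ — is the displayed identity above: that $\Maps(-,Z)$ converts the closed-gluing pushout $\BX_0\times S=(\BX_0^+\times S)\sqcup_{\on{pt}\times S}(\BX_0^-\times S)$ into a fiber product. For a scheme $Z$ this is just the universal property of pushouts along closed immersions, but for a general algebraic space $Z$ it requires a separate argument, which is carried out in \cite{Dr}.
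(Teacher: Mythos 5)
Your proof is correct and is essentially the paper's own argument: the paper's entire proof consists of observing that $S\times\BX_0$ is the pushout of $S\times\BX_0^+\hookleftarrow S\times\on{pt}\hookrightarrow S\times\BX_0^-$ in the category of schemes and then applying $\Maps(-,Z)^{\BG_m}$, which is exactly your route (you merely spell out the coordinate-ring verification and the identification with the map \eqref{e:0 fiber}). Your closing remark about why the scheme hypothesis matters also matches Remark \ref{r:tilde Z_0}, which defers the algebraic-space case to \cite{Dr}.
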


\begin{proof}

Follows from the fact that for an affine scheme $S$, the diagram
$$
\CD
S\times \on{pt} @>>>  S\times \BX_0^+ \\
@VVV    @VVV    \\
S\times \BX_0^-   @>>>   S\times \BX_0
\endCD
$$
is a push-out diagram \emph{in the category of schemes}.

\end{proof}

\begin{rem}  \label{r:tilde Z_0}
In \cite[Prop. 2.1.11]{Dr} it is shown that the map \eqref{e:0 fiber} is an isomorphism
more generally when $Z$ is an algebraic space.
\end{rem}

\begin{rem} \label{r:tilde Z_0'}
Combining the isomorphism \eqref{e:0 fiber} with the isomorphism $\wt{Z}_1\simeq Z$,
we can interpret $\wt{Z}$ as an $\BA^1$-family\footnote{In general, this ``family" is not flat, see the example from \remref{r:nonflat}.} of spaces interpolating between $Z$ and its ``degeneration" 
$Z^+\underset{Z^0}\times Z^-$. Hence, the title of the subsection.
\end{rem}

\ssec{Basic properties of the interpolation}

\sssec{}

We have:

\begin{prop}   \label{p:closed and open}  \hfill

\smallskip

\noindent{\em(i)} Let $Y\subset Z$ be a $\BG_m$-stable closed subspace. Then the diagram
\[
\xymatrix{
 \wt{Y} \ar[d]_{\wt{p}_Y} \ar[r]^{}& \wt{Z} \ar[d]^{\wt{p}_Z}\\\
\BA^1\times Y\times Y\;\ar@{^{(}->}[r]^{}&\BA^1\times Z\times Z
    }
\]
is Cartesian. In particular, the morphism $\wt{Y}\to\wt{Z}$ is a closed embedding.

\smallskip

\noindent{\em(ii)} Let $Y\subset Z$ be a $\BG_m$-stable open subspace. Then the above diagram identifies
$\wt{Y}$ with an open subspace of the fiber product 
\[
\wt{Z}\underset{\BA^1\times Z\times Z}\times (\BA^1\times Y\times Y)\, .
\]
In particular, the morphism $\wt{Y}\to\wt{Z}$ is an open embedding.

\end{prop}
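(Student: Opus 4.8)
The plan is to verify the stated Cartesian/open‐embedding properties of $\wt{Z}$ by unwinding the functor of points of $\wt{Z}$ and reducing everything to elementary statements about $\BG_m$-equivariant maps out of $\BX_S$. The key observation is that, by the very definition in \secref{ss:tilde Z}, for a scheme $S$ over $\BA^1$ an $S$-point of $\wt{Z}$ is a $\BG_m$-equivariant morphism $f:\BX_S\to Z$, and the map $\wt{p}_Z$ records the triple $(S\to\BA^1,\ f\circ\sigma_1,\ f\circ\sigma_2)$. So in both parts I would work scheme-theoretically with $S$ an arbitrary test scheme and translate the assertion into: "given a $\BG_m$-equivariant $f:\BX_S\to Z$ whose composites with the two sections $\sigma_1,\sigma_2$ land in $Y$, does $f$ itself factor through $Y$?"

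For part (i), with $Y\subset Z$ a $\BG_m$-stable closed subspace, the point is that $f^{-1}(Y)\subset \BX_S$ is a closed, $\BG_m$-stable subscheme containing $\sigma_1(S)\cup\sigma_2(S)$, and I must show it is all of $\BX_S$. First I would reduce to the case $S=\Spec(R)$ affine (the statement is fpqc-local on $S$), and then to a Noetherian, even local, base by a limit argument; so it suffices to treat $S=\Spec(R)$ with $R$ a Noetherian local ring (or a field, after a further reduction to geometric points if one is willing to invoke that a closed subscheme is detected on geometric points together with the scheme structure — cleaner is to argue directly). Over such $R$ the scheme $\BX_R$ is defined by $\tau_1\tau_2=t$ for $t\in R$; the locus where $\tau_1$ is invertible is $\sigma_1$-adapted (via \remref{r:theaction}, $\BX_{\{\tau_1\ne 0\}}\cong S'\times\BG_m$ $\BG_m$-equivariantly with $S'=\{\tau_1\ne0\}\subset\AA^1_R$), so a $\BG_m$-stable closed subscheme of $\BX_R$ containing $\sigma_1(S)$ automatically contains the whole open locus $\{\tau_1\ne 0\}$, and similarly for $\sigma_2$; since $\{\tau_1\ne 0\}\cup\{\tau_2\ne 0\}$ is scheme-theoretically dense in $\BX_R$ (the complement is the single point $\tau_1=\tau_2=0$, which is not an associated point — this is where one uses that $\BX_R$ is a complete intersection, hence Cohen–Macaulay, hence has no embedded points and its only possible problematic point has codimension $\ge 1$ in every component) a closed subscheme containing a dense open is everything. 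That last density/no-embedded-points input is the one genuinely non-formal ingredient, and I expect it to be the main obstacle to getting the scheme structure (not just the underlying set) right; the conclusion that $\wt{Y}\to\wt{Z}$ is then a closed embedding follows because base change of a closed embedding along $\wt{p}_Z$ gives a closed embedding and we have just identified the source with $\wt{Y}$.

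For part (ii), with $Y\subset Z$ a $\BG_m$-stable open subspace, the analysis is dual and in fact easier: given $f:\BX_S\to Z$ $\BG_m$-equivariant, $f^{-1}(Y)\subset\BX_S$ is an open $\BG_m$-stable subscheme, and the condition that $(S\to\BA^1,f\circ\sigma_1,f\circ\sigma_2)$ lies in $\BA^1\times Y\times Y$ says exactly that $f^{-1}(Y)\supset\sigma_1(S)\cup\sigma_2(S)$. Such an $f$ need not factor through $Y$ — e.g. $f$ could send the origin of some $\BX_0$ fiber outside $Y$ — so unlike part (i) the square is \emph{not} Cartesian, which is precisely why the statement is only that $\wt{Y}$ is an open subspace of the fiber product. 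To prove \emph{that}, I would exhibit $\wt{Y}$ inside $\wt{Z}\underset{\BA^1\times Z\times Z}\times(\BA^1\times Y\times Y)$ as the locus where the universal $\BG_m$-equivariant map $\BX\to Z$ (pulled back over this fiber product) factors through $Y$; this is an open condition on the base because $f^{-1}(Y)$ is open in $\BX_S$ and "$f^{-1}(Y)=\BX_S$" is an open condition on $S$ (the image of the closed complement under the proper... — actually $\BX_S\to S$ is not proper, so more carefully: $f^{-1}(Y)\subset\BX_S$ open, and its complement is closed; "complement is empty" is the condition, and since the structure map $\BX\to\BA^1$ is affine, hence the fibers are affine, and actually it is cleanest to note that $\BX\to\BA^1$ restricted to the anti-diagonal $\BG_m$-action is such that $\BX_S/\!\!/\BG_m$ behaves well — alternatively just observe directly from the $\Spec$-level equations that the non-factoring locus is cut out by the vanishing of finitely many functions on $S$). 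I would then note the morphism $\wt{Y}\to\wt{Z}$ is the composite of this open embedding into the fiber product with the projection of the fiber product to $\wt{Z}$; since the latter projection is obtained by base change from the open embedding $\AA^1\times Y\times Y\hookrightarrow \AA^1\times Z\times Z$, it is itself an open embedding, and a composite of open embeddings is an open embedding. The genuine care needed in (ii) is only in making precise that "$f$ factors through $Y$" is representable by an open subscheme of the fiber product; once that is granted, everything else is formal.
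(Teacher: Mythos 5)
Your overall skeleton matches the paper's proof: both parts are reduced to the question of whether a $\BG_m$-equivariant $f:\BX_S\to Z$ sending the two sections $\sigma_1,\sigma_2$ into $Y$ must send all of $\BX_S$ (resp.\ an open set of fibers $\BX_s$) into $Y$, and the two key inputs are (a) equivariance forces $f(\oBX_S)\subset Y$, where $\oBX_S$ is the preimage of $\oBX=\BX-\{0\}$, and (b) the complement $\BX_S-\oBX_S$ is small. However, your part (ii) has a genuine gap exactly where you stall: you never actually prove that ``$f$ factors through $Y$'' is an open condition on $S$, and your worry that $\BX_S\to S$ is not proper is a symptom of not having used the $\BG_m$-stability of $f^{-1}(Y)$ at this point. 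The fix (which is the paper's argument) is the same observation you already made in part (i): since $f^{-1}(Y)$ is open, $\BG_m$-stable, and contains $\sigma_1(S)\cup\sigma_2(S)$, it contains all of $\oBX_S$; hence the bad locus $\BX_S-f^{-1}(Y)$ is a \emph{closed subset of} $\BX_S-\oBX_S$, and the projection $\BX_S-\oBX_S\to S$ is a closed embedding (it is the origin section over $S\underset{\BA^1}\times\{0\}$). Therefore $\pr_S\bigl(\BX_S-f^{-1}(Y)\bigr)$ is closed in $S$, and its complement -- the locus you want -- is open. Properness of $\BX_S\to S$ is irrelevant once the bad locus is trapped in the origin section.

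In part (i) your structure is correct, but the justification of the one non-formal input -- schematic density of $\oBX_S$ in $\BX_S$ -- does not work as stated. Over a local ring $R$ the complement of $\oBX_R$ is not ``a single point'' but the whole of $\Spec(R/tR)$ sitting at $\tau_1=\tau_2=0$, and $\BX_R=\Spec\,R[\tau_1,\tau_2]/(\tau_1\tau_2-t)$ is not Cohen--Macaulay for a general (non-CM, non-reduced) $R$, so ``complete intersection, hence no embedded points'' is not available; your reduction to Noetherian local $R$ does not cure this. The clean argument, valid for arbitrary $R$, is direct: $A=R[\tau_1,\tau_2]/(\tau_1\tau_2-t)$ is free over $R$ with basis $\{\tau_1^n\}_{n\ge 0}\cup\{\tau_2^n\}_{n\ge 1}$, and a short computation with this basis shows that an element killed by a power of $\tau_1$ and by a power of $\tau_2$ is zero, i.e.\ $A\to A_{\tau_1}\times A_{\tau_2}$ is injective. (To be fair, the paper asserts this density without proof; but since you flagged it as ``the main obstacle,'' you should supply the correct argument rather than the CM one.) The remaining formal steps -- that the Cartesian square in (i) yields a closed embedding $\wt{Y}\to\wt{Z}$ by base change, and that in (ii) the composite of your open embedding with the base-changed open embedding $\BA^1\times Y\times Y\hookrightarrow\BA^1\times Z\times Z$ is open -- are fine.
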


\begin{proof}

Set $$\oBX:=\BX-\{0\},$$
where $0\in \BX$ is the zero in $\BX=\BA^2$. For $S\to \BA^1$, set
$\oBX_S:=\BX_S\underset{\BX}\times \oBX$.

\medskip

(i) Let $S$ be a scheme over $\BA^1$ and $f:\BX_S\to Z$ a $\BG_m$-equivariant morphism. 
Formula~\eqref{e:two sections} defines two sections of the map $\BX_S\to S$. We have to 
show that if $f$ maps these sections to $Y\subset Z$ then $f(\BX_S)\subset Y$. By 
$\BG_m$-equivariance, we have 
$$f(\oBX_S)\subset Y\,.$$
 
\medskip

Since $\oBX_S$ is schematically dense in $\BX_S$ this implies that
$f(\BX_S)\subset Y$.

\medskip

(ii) Just as before, we have a $\BG_m$-equivariant morphism $f:\BX_S\to Z$ such that 
$f(\oBX)\subset Y$.
The problem is now to show that the set
\[
\{ s\in S\,|\,\BX_s\subset f^{-1}(Y)\}
\]
is open in $S\,$. 

\medskip

The complement of this set equals $\pr_S(\BX_S -f^{-1}(Y))$, where  $\pr_S :\BX_S\to S$ is the projection. 
The set $\pr_S(\BX_S -f^{-1}(Y))$ is closed in $S$ because
$\BX_S -f^{-1}(Y)$ is a closed subset of $\BX_S -\oBX_S$, while 
the morphism $\BX_S -\oBX_S\to S$ is closed (in fact, it is a closed embedding).
\end{proof}

\sssec{}

Next we claim:

\begin{prop}  \label{sss:props tilde p}
Let $Z$ be separated. Then the map 
$$\wt{p}:\wt{Z}\to \BA^1\times Z\times Z$$
is a monomorphism.
\end{prop}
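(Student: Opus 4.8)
The assertion is that for $Z$ separated, the morphism $\wt p\colon\wt Z\to\BA^1\times Z\times Z$ is a monomorphism of spaces. By definition, a morphism of spaces is a monomorphism iff for every scheme $S$ the induced map on $S$-points is injective. So I must show: given a scheme $S$ over $\BA^1$, two $\BG_m$-equivariant morphisms $f,g\colon\BX_S\to Z$ whose compositions with the two sections $\sigma_1,\sigma_2$ (from \secref{sss:structure X}(i)) agree, are necessarily equal. (The $\BA^1$-component of $\wt p$ is the structure map, so there is nothing to check there.)

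\textbf{Key steps.} First I would reduce to the open locus $\oBX_S:=\BX_S\underset{\BX}\times(\BX-\{0\})$, exactly as in the proof of \propref{p:closed and open}: since $Z$ is separated, the equalizer of $f|_{\oBX_S}$ and $g|_{\oBX_S}$ is a closed subscheme of $\oBX_S$, hence its closure in $\BX_S$ is a closed subscheme on which $f$ and $g$ agree; because $\oBX_S$ is schematically dense in $\BX_S$, it suffices to show $f$ and $g$ agree on $\oBX_S$. Second, I would analyze $\oBX_S$ $\BG_m$-equivariantly. Over the locus of $S$ lying above $\BA^1-\{0\}$, \remref{r:theaction} gives a $\BG_m$-equivariant trivialization $\BX_S\simeq S\times\BG_m$ via $\sigma_1$ (or $\sigma_2$), so a $\BG_m$-equivariant map out of it is determined by its restriction to the section $\sigma_1$ (resp.\ $\sigma_2$); hence $f$ and $g$ agree there. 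Over the fiber $t=0$, the open subscheme $\oBX_0\subset\BX_0$ is the disjoint union of $\BX_0^+\setminus\{0\}\cong\BG_m$ (embedded via $s\mapsto(s,0)$, which extends $\sigma_1$) and $\BX_0^-\setminus\{0\}\cong\BG_m$ (embedded via $s\mapsto(0,s^{-1})$, which extends $\sigma_2$); again a $\BG_m$-equivariant map is determined by its value on a single point of each $\BG_m$-orbit, and those points are hit by the sections $\sigma_1,\sigma_2$. Thus on $\oBX_S$ — whose $\BG_m$-orbits are all free and each of which meets the image of $\sigma_1$ or of $\sigma_2$ — the maps $f$ and $g$ are determined by $f\circ\sigma_1=g\circ\sigma_1$ and $f\circ\sigma_2=g\circ\sigma_2$, so they coincide. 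Combining with the first step gives $f=g$.

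\textbf{Main obstacle.} The delicate point is making the "a $\BG_m$-equivariant map out of a free $\BG_m$-scheme is determined by its restriction to a section-hitting locus" argument rigorous over an arbitrary base scheme $S$, not just over a point; in particular one must check that $\oBX_S$ is genuinely a free $\BG_m$-scheme and that $\sigma_1,\sigma_2$ together meet every orbit — over the $t=0$ fiber the two components of $\oBX_0$ are each a single free orbit met by exactly one of the sections, while over $t\neq0$ either section alone suffices. The schematic-density reduction is the standard trick used just above in \propref{p:closed and open} and should transfer verbatim with $Z\to Z\times Z$ being a closed embedding playing the role that the inclusion $Y\subset Z$ played there: the equalizer of two maps to a separated scheme is closed. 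I expect the whole argument to be short once one sets up the orbit decomposition of $\oBX_S$ carefully; no serious calculation is involved.
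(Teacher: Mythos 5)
Your proposal is correct and follows essentially the same route as the paper's proof: restrict to $\oBX_S=\BX_S\setminus(\{0\}\times S)$, where the $\BG_m$-equivariant map is determined by its values on the two sections $\sigma_1,\sigma_2$ (whose orbits cover $\oBX_S$), and then conclude by schematic density of $\oBX_S$ in $\BX_S$ together with separatedness of $Z$. The paper's proof is just a terser version of exactly this argument.
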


\begin{proof}
As before, set $\oBX:=\BX-\{0\}$, where $0\in \BX$ is the zero in $\BX=\BA^2$.
Given a map $S\to \wt{Z}$, the corresponding $\BG_m$-equivariant map
$$\oBX_S\to Z$$
is completely determined by the composition 
$$S\to \wt{Z} \overset{\wt{p}}\longrightarrow \BA^1\times Z\times Z\,.$$

\medskip

Now use the fact that $\oBX_S$ is schematically dense in $\BX_S$.

\end{proof}

\begin{cor}
If $Z$ is separated then so is $\wt{Z}$. 
\end{cor}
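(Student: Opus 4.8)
The plan is to deduce separatedness of $\wt{Z}$ from \propref{sss:props tilde p} together with the fact that $\wt{Z}$ is an algebraic space (or scheme, in the case of primary interest). Recall that by the results of \secref{ss:repr of inter} — or directly, since when $Z$ is affine each $\wt{Z}_t$ sits inside $Z\times Z$ — the object $\wt{Z}$ is a scheme (resp.\ algebraic space) of finite type over $\BA^1$. So it suffices to show that a space $W$ admitting a monomorphism to a separated space is itself separated.

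First I would make precise the target. If $Z$ is separated, then $\BA^1\times Z\times Z$ is separated, i.e.\ its diagonal is a closed embedding. The morphism $\wt{p}:\wt{Z}\to \BA^1\times Z\times Z$ is a monomorphism by \propref{sss:props tilde p}. Next I would invoke the elementary fact that a monomorphism $f:W\to V$ induces a Cartesian square
\[
\xymatrix{
W \ar[r]^-{\Delta_W} \ar[d]_{f} & W\times W \ar[d]^{f\times f}\\
V \ar[r]^-{\Delta_V} & V\times V,
}
\]
i.e.\ $\Delta_W$ is the base change of $\Delta_V$ along $f\times f$. Indeed, the fibre product $V\underset{V\times V}{\times}(W\times W)$ represents the functor of pairs $(w_1,w_2)$ with $f(w_1)=f(w_2)$ in the diagonal sense, which since $f$ is a monomorphism forces $w_1=w_2$; this is exactly $W$ mapping in by $\Delta_W$. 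Since the diagonal $\Delta_V$ is a closed embedding (separatedness of $V$) and closed embeddings are stable under base change, $\Delta_W$ is a closed embedding, i.e.\ $W$ is separated.

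Applying this with $W=\wt{Z}$, $V=\BA^1\times Z\times Z$, and $f=\wt{p}$ completes the proof. The only genuinely substantive input is \propref{sss:props tilde p}, already in hand; everything else is the formal observation that a monomorphism pulls back the diagonal of its target to its own diagonal. I do not anticipate a real obstacle here — the one point requiring a word of care is the appeal to representability of $\wt{Z}$ as a scheme/algebraic space, so that ``separated'' has its usual meaning via a closed-embedding diagonal; this is supplied by the discussion of \secref{ss:repr of inter}. One could alternatively phrase the whole argument functorially at the level of $k$-spaces, observing that $\wt{Z}(S)\to(\BA^1\times Z\times Z)(S)$ injective for all $S$ means the equalizer defining the diagonal of $\wt{Z}$ is pulled back from that of $\BA^1\times Z\times Z$, but the clean statement is the Cartesian-square argument above.
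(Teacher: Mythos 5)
Your argument is correct and is precisely the intended one: the paper states this corollary immediately after \propref{sss:props tilde p} with no proof, because separatedness of $\wt{Z}$ follows formally from $\wt{p}$ being a monomorphism into the separated space $\BA^1\times Z\times Z$. Your Cartesian-square verification that a monomorphism into a separated space has separated source is the standard (and correct) way to fill in that one-line deduction.
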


\sssec{The affine case}

We are going to prove:

\begin{prop} \label{p:2new tilde}
Assume that $Z$ is an affine scheme of finite type. Then the morphism $\wt{p}:\wt{Z}\to \BA^1\times Z\times Z$ is a closed embedding. 
In particular, $\wt{Z}$ is an affine scheme of finite type.
\end{prop}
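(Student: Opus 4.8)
The plan is to reduce everything to an explicit description of $\wt{Z}$ in terms of the graded coordinate ring of $Z$, exactly parallel to the affine descriptions of $Z^0$, $Z^+$, $Z^-$ in Sects.~\ref{sss:attractors-affine} and \ref{ss:repeller}. Write $Z=\Spec(A)$, where $A$ is a finitely generated $\BZ$-graded $k$-algebra, $A=\bigoplus_{n\in\BZ}A_n$. By definition, for a $k$-algebra $R$ (viewed as an $\BA^1$-algebra via some $t\in R$, i.e.\ a homomorphism $k[t]\to R$) an $R$-point of $\wt{Z}$ is a $\BG_m$-equivariant map $\BX_R\to Z$, i.e.\ a graded algebra homomorphism $A\to \Gamma(\BX_R,\CO)=R[\tau_1,\tau_2]/(\tau_1\tau_2-t)$, where $\tau_1$ has degree $+1$, $\tau_2$ has degree $-1$, and $R$ sits in degree $0$. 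Such a homomorphism is determined by where it sends each homogeneous element $a\in A_n$: for $n\ge 0$ the image must lie in the degree-$n$ part, which is $R\cdot\tau_1^{n}$ (modulo $\tau_1\tau_2=t$), and for $n\le 0$ in $R\cdot\tau_2^{-n}$. So I would record the data of such a homomorphism by the two ordinary (ungraded) algebra maps $A^+\to R$ and $A^-\to R$ obtained by composing with $\tau_1\mapsto 1$ resp.\ $\tau_2\mapsto 1$ — equivalently a point $(z^+,z^-)\in Z^+\times Z^-$ — together with the element $t\in R$, subject to the compatibility coming from the relation $\tau_1\tau_2=t$.

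Concretely, the heart of the argument is to identify, inside $\BA^1\times Z^+\times Z^-$ (which already maps to $\BA^1\times Z\times Z$), the closed subscheme cut out by these relations and to check it equals the image of $\wt p$. Let me spell out the relation. Given $a\in A_n$ with $n>0$, in the graded ring $A$ we may also consider $a\cdot(\text{anything of degree } {-n})$; the cleanest bookkeeping is: for $a\in A_n$, $b\in A_m$ with $n,m\ge 0$, under the homomorphism $A\to R[\tau_1,\tau_2]/(\tau_1\tau_2-t)$ the element $a$ goes to $\bar a\,\tau_1^n$, the element $b$ goes to $\bar b\,\tau_1^m$ with $\bar a,\bar b\in R$, while a degree $-m$ homogeneous element is sent into $R\tau_2^m$, and multiplying a degree $+n$ and a degree $-m$ homogeneous element and using $\tau_1\tau_2=t$ forces a relation of the form $\overline{ac}=t^{\min(n,m)}\cdot(\dots)$ expressing compatibility between the $A^+$-data and the $A^-$-data. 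I would make this precise by choosing homogeneous generators of $A$ and writing the finitely many resulting polynomial equations; this exhibits $\wt Z$ as $\Spec$ of an explicit finitely generated $k$-algebra, hence affine of finite type, and simultaneously exhibits $\wt p$ as a closed embedding because the structure map of this algebra from $k[\,t\,]\otimes (\text{coordinate ring of }Z\times Z)$ is visibly surjective (all the $\tau_1,\tau_2$-coordinates of a point of $\wt Z$, hence all functions on $\wt Z$, are already determined by, and are polynomials in, the coordinates pulled back from $\BA^1\times Z\times Z$).

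An alternative, more structural route that avoids heavy coordinates: we already know from \propref{sss:props tilde p} that for $Z$ separated — in particular affine — the map $\wt p$ is a monomorphism. A monomorphism of finite type into an affine scheme is automatically separated and quasi-finite-ish in the relevant sense, but to upgrade "monomorphism" to "closed embedding" it suffices to show $\wt p$ is \emph{proper}, or at least that its image is closed and it is a closed map. For this I would use the schematic-density trick from the proof of \propref{sss:props tilde p}: $\oBX_S\subset\BX_S$ is schematically dense, and over $\BA^1-\{0\}$ the map $\wt p$ is the graph of the $\BG_m$-action (\propref{p:outside 0}), hence a closed embedding there; so the only issue is along $t=0$, where $\wt Z_0\cong Z^+\underset{Z^0}\times Z^-$ by \propref{p:tilde Z_0}, and in the affine case $Z^+\underset{Z^0}\times Z^-=Z^+\underset{Z}\times Z^-$ by the remark after \propref{p:Cartesian}, which is a closed subscheme of $Z^+\times Z^-$, itself closed in $Z\times Z$ by \propref{p: p^+}(ii). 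Combining: the image is constructible, agrees with a closed set away from $t=0$ and with a closed set on $t=0$, and $\wt p$ restricted to each is a closed embedding. I would then invoke that a finite-type monomorphism which is a closed embedding on a closed cover of the target (or: whose scheme-theoretic image it is an isomorphism onto) is a closed embedding.

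The step I expect to be the genuine obstacle is pinning down the precise polynomial relations in the first approach — i.e.\ showing that the obvious necessary compatibility conditions between the $A^+$-point $z^+$, the $A^-$-point $z^-$, and the parameter $t$ are also \emph{sufficient}, so that no spurious extra functions appear and $\wt Z\to\BA^1\times Z\times Z$ really is a \emph{closed} embedding and not merely a locally closed one or a monomorphism. Equivalently, in the second approach, the delicate point is gluing the "$t\ne0$" closed embedding and the "$t=0$" closed embedding into a single closed embedding: one must check there is no nilpotent or unseen behaviour in $\wt Z$ near $t=0$ beyond what $\wt Z_0\cong Z^+\underset{Z^0}\times Z^-$ predicts, which is exactly what the explicit coordinate computation settles cleanly (the flatness issue flagged in \remref{r:nonflat} is precisely why one cannot be cavalier here). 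For safety I would write the proof via the explicit graded-ring computation, since it yields all assertions — affineness, finite type, and closedness of the embedding — in one stroke.
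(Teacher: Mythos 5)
Your primary route (the explicit graded--ring computation) is correct and is in substance the same argument as the paper's; the paper merely organizes it differently, first reducing via \propref{p:closed and open}(i) to the case where $Z$ is a vector space with a linear $\BG_m$-action, then via products to $Z=\BA^1$ with the weight-$n$ action, where the computation you describe collapses to the single equation $x_2=t^n\cdot x_1$ (resp.\ $x_1=t^{-n}\cdot x_2$). The common key point is exactly the one you isolate: the degree-$n$ component of $R[\tau_1,\tau_2]/(\tau_1\tau_2-t)$ is free of rank one on $\tau_1^{n}$ (for $n\ge 0$) resp.\ $\tau_2^{-n}$ (for $n\le 0$), so a graded homomorphism $A\to R[\tau_1,\tau_2]/(\tau_1\tau_2-t)$ is recorded by elements $\bar a\in R$, each of which is $\pi_1^*(a)$ or $\pi_2^*(a)$; this gives both the presentation of $\wt{Z}$ as an affine scheme of finite type and the surjectivity of $(\wt{p})^*$. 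The paper's reduction buys a shorter write-up of the relations; your direct version buys an explicit presentation of $\CO(\wt Z)$ for arbitrary affine $Z$.

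Your ``alternative, more structural route'' does not work as stated, and you are right to retreat from it: a finite-type monomorphism that restricts to a closed embedding over the open part $\BG_m\subset\BA^1$ and over the closed part $\{0\}$ need not be a closed embedding (compare $\BA^1\sqcup\{\infty\}\to\BP^1$ in Example~\ref{ex:P^1}, which is a closed embedding on each piece of an open/closed decomposition of the source, yet not even locally closed). Knowing the fibers $\wt Z_t$ and knowing that $\wt p$ is a monomorphism does not control the scheme structure of $\wt Z$ in a neighborhood of $t=0$, which is precisely the non-flatness issue you flag. So the coordinate computation is not an optional safety measure but the actual content of the proof.
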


\begin{proof}
If $Z$ is a closed subscheme of an affine scheme $Z'$ and the proposition holds for $Z'$ then it holds for 
$Z$ by  \propref{p:closed and open}(i). So we are reduced to the case that $Z$ is a finite-dimensional 
vector space equipped with a linear $\BG_m$-action.   

\medskip

If the proposition holds for affine schemes $Z_1$ and $Z_2$ then it holds for $Z_1\times Z_2\,$.
So we are reduced to the case that $Z=\BA^1$ and $\lambda\in\BG_m$ acts on $\BA^1$ as multiplication by 
$\lambda^n$, $n\in\BZ$. 

\medskip

In this case it is straightforward to compute $\wt{Z}$ directly. 
In particular, one checks that $\wt{p}$ identifies $\wt{Z}$ with the closed subscheme of 
$\BA^1\times Z\times Z$ defined by the equation $x_2=t^n\cdot x_1$ if $n\ge 0$ and by the equation 
$x_1=t^{-n}\cdot x_2$ if $n\le 0$ (here $t,x_1,x_2$ are the coordinates on $\BA^1\times Z\times Z=\BA^3$).

\end{proof}

\ssec{Representability of the interpolation}  \label{ss:repr of inter}

\sssec{}

We have the following assertion, which is proved in \cite[Thm. 2.2.2 and Prop. 2.2.3]{Dr}:

\begin{thm}   \label{t:tildeZ}
Let $Z$ be an algebraic space (resp., scheme) of finite type equipped with a $\BG_m$-action. Then
$\wt{Z}$ is an algebraic space (resp., scheme) of finite type.
\end{thm}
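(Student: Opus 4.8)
The plan is to reduce the general representability statement to the affine case, which has already been handled in \propref{p:2new tilde}, by an étale-localization argument on the base $Z^0$. The first step is to observe that, since $\wt{Z}$ carries a natural morphism to $\BA^1$ and the fiber $\wt{Z}_t$ for $t\neq 0$ is the graph of $t:Z\iso Z$ (\propref{p:outside 0}), the only issue is representability in a neighbourhood of the zero fiber; and by \propref{p:tilde Z_0} (or \remref{r:tilde Z_0}) $\wt{Z}_0\iso Z^+\underset{Z^0}\times Z^-$, which is already known to be an algebraic space of finite type by \thmref{t:attractors} (applied to $Z$ and to $Z$ with the inverse action) together with \propref{p:Z^0closed}. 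So one expects $\wt{Z}$ itself to be built over $Z^0$ via the projection $\wt{Z}\to Z^0$ obtained by composing $\wt{Z}\to Z^+\times Z^-$ (from $\pi_1$ followed by $q^+$, say, on a suitable chart) — more precisely, via the composite $\wt{Z}\to\BA^1\times Z\times Z$ and the structure of $Z^\pm$ over $Z^0$.

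The main step is then: choose a $\BG_m$-invariant affine étale (or smooth) cover of $Z$ in a neighbourhood of $Z^0$. Concretely, $Z^0\subset Z$ is closed (\propref{p:Z^0closed}), and one can find a $\BG_m$-stable open subspace $U\subset Z$ containing $Z^0$ together with a $\BG_m$-equivariant étale surjection $V\to U$ with $V$ a disjoint union of affine schemes carrying a $\BG_m$-action; this is the standard equivariant-charts input (it is exactly the kind of statement established in \cite{Dr}, Sect.~1, and in the locally linear case it is elementary). By \propref{p:closed and open}(ii), passing to a $\BG_m$-stable open subspace is harmless for $\wt{(-)}$: $\wt{U}$ is open in $\wt{Z}\underset{\BA^1\times Z\times Z}\times(\BA^1\times U\times U)$, and in fact $\wt{U}=\wt{Z}$ over a neighbourhood of the zero section together with all of $\BG_m\underset{\BA^1}\times\wt{Z}$, so it suffices to represent $\wt{U}$. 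For $\wt{V}$ one has representability by \propref{p:2new tilde}. It remains to descend: one shows that $\wt{V}\to\wt{U}$ is again an étale surjection of spaces (this is where the compatibility of the functor $X\rightsquigarrow\GMaps(\BX_S,X)$ with étale morphisms enters — an étale morphism $V\to U$ induces, after the $\BG_m$-equivariant pullback, a morphism $\wt{V}\to\wt{U}$ whose base change along any $S\to\wt{U}$ is the pullback of an étale morphism, hence étale), and then $\wt{V}\times_{\wt{U}}\wt{V}=\wt{V\times_U V}$ is again representable by the affine case, so $\wt{U}$ is the quotient of an étale equivalence relation in affine schemes and is therefore an algebraic space of finite type. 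Finally one reassembles $\wt{Z}$ from $\wt{U}$ (a neighbourhood of the zero fiber) and $\BG_m\underset{\BA^1}\times\wt{Z}$ (the part over $t\neq 0$, which is the graph of $\BG_m\times Z\to Z$ by \propref{p:outside 0}, hence representable), glued over their overlap; when $Z$ is a scheme one refines the cover to be Zariski and gets that $\wt{Z}$ is a scheme.

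The hard part is the étale-descent step — specifically, verifying that $V\rightsquigarrow\wt{V}$ commutes with $\BG_m$-equivariant étale base change in the precise sense needed (so that $\wt{V}\to\wt{U}$ is étale and $\wt{V}\times_{\wt{U}}\wt{V}\iso\wt{V\times_U V}$), because $\wt{V}$ is defined by a $\Maps$-functor out of the non-reduced, non-flat family $\BX_S$, and one must control how such mapping spaces behave under étale morphisms of the target. This is exactly the technical content deferred to \cite[Thm.~2.2.2]{Dr}; here I would either cite it or, in the locally linear/scheme case, argue directly using the explicit description of $\wt{Z}$ over an affine chart from \propref{p:2new tilde} and the fact that $\wt{p}$ is a monomorphism when $Z$ is separated (\propref{sss:props tilde p}), which lets one identify $\wt{U}$ with an honest locally closed-type subfunctor of $\BA^1\times Z\times Z$ cut out étale-locally by the equations $x_2=t^n x_1$.
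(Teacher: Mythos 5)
Your proposal has the right skeleton and ends up in the same place as the paper for the case the paper actually proves: the core inputs are \propref{p:2new tilde} (the affine case), \propref{p:closed and open}(ii) (openness of $\wt{U}\to\wt{Z}$ for $\BG_m$-stable open $U\subset Z$), \propref{p:outside 0} for the locus $t\ne 0$, and \propref{p:tilde Z_0} together with \lemref{l:U^+}(i) at $t=0$. But the paper's argument in the locally linear scheme case is strictly Zariski-local and avoids your whole descent layer: one covers $Z$ by $\BG_m$-stable affine opens $U_i$, observes that each $\wt{U}_i$ is an affine scheme of finite type and is open in $\wt{Z}$, and then the only remaining point is that the $\wt{U}_i$ cover $\wt{Z}$, which is checked fiberwise over $\BA^1$ --- trivially for $t\ne 0$, and at $t=0$ by the observation that a point of $\wt{Z}_0=Z^+\underset{Z^0}\times Z^-$ is a pair $(z^+,z^-)$ with $q^+(z^+)=q^-(z^-)\in U_i$ for some $i$, whence $z^+,z^-\in U_i$ by \lemref{l:U^+}(i). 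There is no need for a single stable open $U\supset Z^0$, no \'etale charts, and no descent. For general algebraic spaces the paper, like you, simply defers to \cite{Dr}.

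The genuine gap in your write-up is the \'etale-descent step, on which you lean even though you flag it as hard. First, the parenthetical justification that $\wt{V}\to\wt{U}$ is \'etale because ``its base change along any $S\to\wt{U}$ is the pullback of an \'etale morphism'' is not a valid argument: that base change is the functor of $\BG_m$-equivariant lifts of a given map $\BX_S\to U$ through $V\to U$, i.e.\ a section-type functor over the two-dimensional family $\BX_S/S$, not a pullback of $V\to U$ along any map $S\to U$; that $\GMaps(\BX_S,-)$ interacts well with equivariant \'etale morphisms (and likewise that $\wt{V}\times_{\wt{U}}\wt{V}\simeq\wt{V\times_UV}$) is precisely the nontrivial content of \cite[Thm.~2.2.2]{Dr}, not a formal consequence of stability of \'etaleness under base change. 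Second, you assert that $\wt{V}\to\wt{U}$ is \emph{surjective} without argument; surjectivity over $t=0$ is exactly the covering statement that the paper proves via \lemref{l:U^+}(i), and it is the one place where a genuine geometric input about attractors enters. In the locally linear case your plan does collapse, after ``refining to a Zariski cover,'' to the paper's argument --- but then the \'etale apparatus was never needed, and the fallback you sketch (using that $\wt{p}$ is a monomorphism for separated $Z$) does not by itself yield representability; you still need the open-cover argument above.
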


Below we will give a proof in the case when $Z$ is a scheme and the action of $\BG_m$ on $Z$ is locally linear. 
This case will be sufficient for the applications in this paper. 

\begin{proof}

By assumption, $Z$ can be covered by open affine $\BG_m$-stable subschemes $U_i$. By \propref{p:2new tilde}, each 
$\wt{U}_i$ is an affine scheme of finite type. By \propref{p:closed and open}(ii), for each $i$ the 
canonical morphism $\wt{U}_i\to\wt{Z}$ is an open embedding. It remains to show that $\wt{Z}$ is 
covered by the open subschemes $\wt{U}_i$.

\medskip 

It suffices to check that for each $t\in\BA^1$ the fiber $\wt{Z}_t$ is covered by the open subschemes 
$(\wt{U}_i)_t$. For $t\ne 0$ this is clear from \propref{p:outside 0}. It remains to consider the case $t=0$.

\medskip

By \propref{p:tilde Z_0}, $\wt{Z}_0=Z^+\underset{Z^0}\times Z^-$. So a point of $\wt{Z}_0$ is a pair
$(z^+,z^-)\in Z^+\times Z^-$ such that $q^+(z^+)=q^-(z^-)$. The point $q^+(z^+)=q^-(z^-)$ is contained in some
$U_i\,$. By Lemma~\ref{l:U^+}(i), we have $z^+,z^-\in U_i\,$. So our point $(z^+,z^-)\in\wt{Z}_0$ belongs to 
$ (\wt{U}_i)_0\,$.

\end{proof}

\sssec{The contracting situation}

Let $Z$ be an algebraic space of finite type, and assume that the $\BG_m$-action on $Z$ is contracting,
i.e., the $\BG_m$-action can be extended to an action of the monoid $\BA^1$ (recall that such an extension is unique, see 
\secref{sss:contracting} including \remref{r:contracting}). In this case we claim:

\begin{prop} \label{p:2contracting}  \hfill

\smallskip

\noindent{\em(i)} The morphism $\wt{p}:\wt{Z}\to \BA^1\times Z\times Z$ identifies $\wt{Z}$
with the graph of the $\BA^1$-action on $Z$; in particular, the composition
\begin{equation}  \label{e:first iso}
\wt{Z}\overset{\wt{p}}\longrightarrow\BA^1\times Z\times Z\to\BA^1\times Z\times \on{pt}=\BA^1\times Z
\end{equation}
is an isomorphism.

\smallskip

\noindent{\em(ii)} The inverse of \eqref{e:first iso} is the morphism
\begin{equation}   \label{e:beta}
Z\times \BA^1\to\wt{Z},
\end{equation}
corresponding to the $\BG_m$-equivariant map $Z\times \BX\to Z$, defined by
\[
(z,\tau_1,\tau_2)\mapsto\tau_1\cdot z\, ,\quad\quad (\tau_1,\tau_2)\in\BX\, ,\; z\in Z.
\]
\end{prop}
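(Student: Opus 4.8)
The plan is to reduce everything to the affine local picture and then invoke the explicit computation already carried out in \propref{p:2new tilde}. Recall that the contracting hypothesis means the $\BG_m$-action on $Z$ extends (uniquely) to a morphism $a:\BA^1\times Z\to Z$ of monoid actions. First I would observe that the map \eqref{e:beta} is well-defined: the formula $(z,\tau_1,\tau_2)\mapsto \tau_1\cdot z$ makes sense precisely because $a$ is defined on all of $\BA^1$, and one checks directly that this map $Z\times\BX\to Z$ is $\BG_m$-equivariant for the anti-diagonal action \eqref{e:hyperbolic} on $\BX$ (the $\lambda^{-1}$ on $\tau_2$ does not interfere since $\tau_2$ does not enter the formula, and $\tau_1\mapsto\lambda\tau_1$ is matched by the action on the target $Z$). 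It also respects the map to $\BA^1$, since $(\tau_1,\tau_2)\mapsto\tau_1\tau_2$ and under the substitution $\tau_2$ is free — so this genuinely defines a morphism $Z\times\BA^1\to\wt Z$ over $\BA^1$ as claimed in (ii).

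Next I would identify the composition $\wt p$ followed by the projection to $\BA^1\times Z\times\{*\}$ applied to \eqref{e:beta}. The section $\sigma_1(t)=(1,t)$ gives $\pi_1$ of the point $(z,t)$, which is $1\cdot z=z$; the section $\sigma_2(t)=(t,1)$ gives $\pi_2$, which is $t\cdot z$. Hence the composite $Z\times\BA^1\to\wt Z\to \BA^1\times Z\times Z$ sends $(z,t)$ to $(t,z,t\cdot z)$, and post-composing with the projection killing the last factor yields $(z,t)\mapsto(t,z)$, i.e.\ the identity (up to the swap of factors). This shows \eqref{e:beta} is a section of \eqref{e:first iso}. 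So it remains only to prove that \eqref{e:first iso} is a monomorphism, or equivalently that $\wt p$ identifies $\wt Z$ with the graph $\Gamma_a\subset\BA^1\times Z\times Z$ of $a$ — then \eqref{e:beta} and \eqref{e:first iso} are mutually inverse and statement (i) follows, with the composition \eqref{e:first iso} an isomorphism because projection of a graph onto its domain is.

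For the remaining injectivity statement I would argue as in \secref{ss:tilde Z}: given a scheme $S$ over $\BA^1$ and a $\BG_m$-equivariant $f:\BX_S\to Z$, the restriction of $f$ to $\oBX_S:=\BX_S\setminus\{0\}$ is determined by $\wt p\circ f$ (this is exactly \propref{sss:props tilde p} — but here $Z$ need not be separated, so I cannot invoke it directly; instead I note that over $\oBX_S$ the map factors through the open locus and is pinned down by its two sections via $\BG_m$-equivariance, using \remref{r:theaction}). The point is that on the dense open $\oBX_S$ the morphism $f$ is forced, and then one must extend across the zero section. This is where the contracting hypothesis does the work: the formula $\tau_1\cdot z$ gives a canonical extension to all of $\BX_S$, and this extension is the \emph{unique} one compatible with the monoid action because the $\BA^1$-extension of the $\BG_m$-action on $Z$ is unique (see \secref{sss:contracting}, \remref{r:contracting}). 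The main obstacle is precisely verifying this extension-and-uniqueness over a general, possibly non-separated algebraic space — for schemes one can cover by $\BG_m$-stable opens and reduce to the affine case handled in \propref{p:2new tilde} (where one literally reads off that $\wt p$ is a closed embedding onto $x_2=t^n x_1$, which for a contracting, i.e.\ $n\ge 0$, linear action is exactly the graph of the $\BA^1$-action), but for algebraic spaces one should invoke \remref{r:tilde Z_0} / \remref{r:contracting} from \cite{Dr} to control the local structure. Once $\wt p$ is known to be a monomorphism onto $\Gamma_a$, assertions (i) and (ii) are immediate.
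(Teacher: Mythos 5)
Your first half (that $\alpha\circ\beta=\id$, computed via the two sections $\sigma_1,\sigma_2$) is correct and matches the paper, as is the reduction of everything to showing that $\alpha$ (equivalently $\wt p$ onto the graph) is a monomorphism. The gap is in how you prove the monomorphism. You argue that a $\BG_m$-equivariant $f:\BX_S\to Z$ is determined on $\oBX_S$ by its two sections and that the extension across $\BX_S-\oBX_S$ is ``the unique one compatible with the monoid action because the $\BA^1$-extension of the $\BG_m$-action on $Z$ is unique.'' These are two different uniqueness statements: the moduli problem defining $\wt Z$ only asks for $\BG_m$-equivariance of $f$, not compatibility with any $\BA^1$-action on $\BX_S$, so what you actually need is that two $\BG_m$-equivariant maps $\BX_S\to Z$ agreeing on $\oBX_S$ agree everywhere. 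That follows from schematic density only when $Z$ is \emph{separated} (this is exactly \propref{sss:props tilde p}), and separatedness is not assumed here. Your proposed fallbacks don't close this: for the scheme case, a $\BG_m$-stable affine open $U_i$ of a contracting $Z$ need not itself carry a contracting action (e.g.\ $\BG_m\subset\BA^1$), so the weights in \propref{p:2new tilde} need not all be $\ge 0$ on each piece and the ``graph of the $\BA^1$-action'' reading of $\wt U_i$ does not apply chart by chart; for algebraic spaces you only gesture at \cite{Dr}.

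The paper closes this step by a different, purely fiberwise argument that you should adopt: since $\alpha$ is a morphism between algebraic spaces of finite type (\thmref{t:tildeZ}), being a monomorphism can be checked on fibers, and it is enough to show $\alpha$ induces an isomorphism $\wt Z_t\to Z$ for every $t\in\BA^1$. For $t\ne 0$ this is \propref{p:outside 0}. For $t=0$, by \propref{p:tilde Z_0} (or \remref{r:tilde Z_0} for algebraic spaces) the map in question is $Z^+\underset{Z^0}\times Z^-\to Z^+\overset{p^+}\longrightarrow Z$; the contracting hypothesis makes $p^+$ an isomorphism by \propref{p:contracting} (resp.\ \remref{r:contracting}), and $q^-:Z^-\to Z^0$ is an isomorphism by \corref{c:contractive}(i), so the fiber over $0$ also maps isomorphically. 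This avoids any density or extension argument and works without separatedness.
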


\begin{proof} 
Let $\alpha :\wt{Z}\to\BA^1\times Z$ denote the composition \eqref{e:first iso} and 
$\beta :\BA^1\times Z\to\wt{Z}$ the morphism~\eqref{e:beta}. It is easy to see that $\alpha\circ\beta=\id$.

\medskip

In order to prove that $\beta\circ \alpha=\id$, it is enough to show that $\alpha$ is a monomorphism. By
\thmref{t:tildeZ}, we are dealing with a morphism between algebraic spaces of finite type, so being a
monomorphism is a fiber-wise condition. Thus, it suffices to show that $\alpha$ induces an isomorphism between fibers 
over any $t\in\BA^1$. 

\medskip

For $t\ne 0$ this follows from \propref{p:outside 0}. If $t=0$ then by \propref{p:tilde Z_0}
(resp., Remark \ref{r:tilde Z_0} in the case of algebraic spaces), the morphism in question is the composition 
$$Z^+\underset{Z^0}\times Z^-\to Z^+\overset{p^+}\longrightarrow Z\,.$$ 
By \propref{p:contracting} (resp., Remark \ref{r:contracting} in the non-separated case), $p^+$ is an isomorphism,
and the projection $q^-:Z^-\to Z^0$ is also an isomorphism by \corref{c:contractive}(i).

\end{proof} 

\sssec{}

From \propref{p:2contracting} we formally obtain the following one:

\begin{prop} \label{p:dilating}  
Let $Z$ be an algebraic space, and assume that the \emph{inverse} of the $\BG_m$-action on $Z$ is contracting. Then:

\smallskip

\noindent{\em(i)} the morphism $\wt{p}:\wt{Z}\to \BA^1\times Z\times Z$ is a monomorphism, which identifies  
$\wt{Z}$ with 
\[
\{(t,z_1,z_2\,)\in\BA^1\times Z\times Z\,|\, z_1=t^{-1}\cdot z_2\,\};
\]
in particular, the composition
\begin{equation} \label{e:second iso}
\wt{Z}\overset{\wt{p}}\longrightarrow\BA^1\times Z\times Z\to\BA^1\times \on{pt}\times Z=\BA^1\times Z
\end{equation}
is an isomorphism.

\smallskip

\noindent{\em(ii)} The inverse of \eqref{e:second iso} is the morphism
\begin{equation}   \label{e:2beta}
Z\times \BA^1\to\wt{Z},
\end{equation}
corresponding to the $\BG_m$-equivariant map $Z\times \BX\to Z$, defined by
\[
(z,\tau_1,\tau_2)\mapsto\tau_2^{-1}\cdot z\, ,\quad\quad (\tau_1,\tau_2)\in\BX\, ,\; z\in Z.
\]
\end{prop}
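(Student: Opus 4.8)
The plan is to derive \propref{p:dilating} \emph{formally} from \propref{p:2contracting}, applied to $Z$ equipped with the \emph{inverse} $\BG_m$-action, and then transported back along a built-in symmetry of the construction $Z\rightsquigarrow\wt{Z}$.

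First I would record the symmetry. The scheme $\BX=\BA^2$ carries the involution $\theta\colon(\tau_1,\tau_2)\mapsto(\tau_2,\tau_1)$, which covers $\id_{\BA^1}$ (as $\tau_1\tau_2=\tau_2\tau_1$), interchanges the two sections $\sigma_1,\sigma_2$ of \eqref{e:two sections}, and conjugates the $\BG_m$-action \eqref{e:hyperbolic} into its inverse, i.e.\ $\theta(\lambda\cdot x)=\lambda^{-1}\cdot\theta(x)$. Writing $Z^{\mathrm{inv}}$ for $Z$ with the inverted action, this last property shows that precomposition with $\theta_S\colon\BX_S\to\BX_S$ is a bijection between $\BG_m$-equivariant maps $\BX_S\to Z^{\mathrm{inv}}$ and $\BG_m$-equivariant maps $\BX_S\to Z$, natural in the $\BA^1$-scheme $S$; hence an isomorphism of $\BA^1$-spaces $\Theta\colon\wt{Z}\iso\wt{Z^{\mathrm{inv}}}$. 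Since $\theta$ swaps $\sigma_1$ and $\sigma_2$, the isomorphism $\Theta$ interchanges $\pi_1$ and $\pi_2$, so that $\wt{p}_Z=(\id_{\BA^1}\times\mathrm{sw})\circ\wt{p}_{Z^{\mathrm{inv}}}\circ\Theta$, where $\mathrm{sw}\colon Z\times Z\to Z\times Z$ is the flip of factors.

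The hypothesis that the inverse action on $Z$ is contracting is precisely the hypothesis of \propref{p:2contracting} for $Z^{\mathrm{inv}}$, so I would simply invoke that proposition and read off its two parts through $\Theta$ and $\mathrm{sw}$. Part (i) identifies $\wt{Z^{\mathrm{inv}}}$ via $\wt{p}_{Z^{\mathrm{inv}}}$ with the graph of the monoid action $\BA^1\times Z\to Z$ extending the inverted $\BG_m$-action --- which is exactly the operation written $t^{-1}\cdot(-)$ in the statement of \propref{p:dilating} --- hence with $\{(t,z_1,z_2)\mid z_2=t^{-1}\cdot z_1\}$; a graph embedding is a section of a projection and thus a monomorphism, and $\id_{\BA^1}\times\mathrm{sw}$ and $\Theta$ are isomorphisms, so $\wt{p}_Z$ is a monomorphism with image $\{(t,z_1,z_2)\mid z_1=t^{-1}\cdot z_2\}$, which is assertion (i); the composition \eqref{e:second iso} is the transport under $\Theta$ and $\mathrm{sw}$ of the composition \eqref{e:first iso} for $Z^{\mathrm{inv}}$, hence an isomorphism. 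For (ii) I would transport \propref{p:2contracting}(ii): its morphism $(z,\tau_1,\tau_2)\mapsto\tau_1\cdot z$ (for the extended inverted action) becomes, after precomposing the map out of $\BX$ with $\theta$, the morphism $(z,\tau_1,\tau_2)\mapsto\tau_2^{-1}\cdot z$ of \eqref{e:2beta}.

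I do not expect a genuine obstacle here: all the substance lives in \propref{p:2contracting} (already available) and \thmref{t:tildeZ}. The only point that needs care is bookkeeping --- tracking that the $Z\times Z$-components of $\wt{p}$ occur in the order $(\pi_1,\pi_2)$, that $\Theta$ exchanges them, and that the shorthand ``$t^{-1}\cdot(-)$'' in \propref{p:dilating} denotes the unique $\BA^1$-extension of the inverted $\BG_m$-action (playing the role of the plain $\BA^1$-action in \propref{p:2contracting}). An alternative not using $\theta$ would be to rerun the proof of \propref{p:2contracting} verbatim with the roles of $\sigma_1$ and $\sigma_2$ exchanged, but the symmetry argument is cleaner.
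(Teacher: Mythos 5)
Your argument is correct and is precisely the ``formal'' derivation the paper has in mind: the text simply asserts that \propref{p:dilating} is obtained formally from \propref{p:2contracting}, and your involution $\theta:(\tau_1,\tau_2)\mapsto(\tau_2,\tau_1)$, which swaps $\sigma_1$ with $\sigma_2$ and conjugates the action \eqref{e:hyperbolic} into its inverse, is exactly the symmetry that makes this work. The bookkeeping (that $\Theta$ exchanges $\pi_1$ and $\pi_2$, and that ``$t^{-1}\cdot(-)$'' means the $\BA^1$-extension of the inverted action) is handled correctly.
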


\ssec{Further properties of the interpolation}

The material in this subsection is included for completeness and will not be used in the sequel.

\medskip

Throughout this subsection, $Z$ will be be an algebraic space of finite type equipped
with a $\BG_m$-action.

\sssec{}

We claim:

\begin{prop}   \label{p:2smoothness}
If $Z$ is smooth then the canonical morphism $\wt{Z}\to\BA^1$ is smooth.
\end{prop}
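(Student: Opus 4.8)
The plan is to prove smoothness of $\wt Z\to\BA^1$ by a formal-smoothness computation, i.e.\ a deformation-theoretic lifting argument entirely analogous to the proof of \propref{p:smoothness}. Since $\wt Z$ is an algebraic space of finite type over $\BA^1$ (by \thmref{t:tildeZ}), it suffices to check that $\wt Z\to\BA^1$ is formally smooth: given a $k$-algebra $R$ with square-zero ideal $I\subset R$, $\bar R=R/I$, a map $\bar s\colon\Spec(\bar R)\to\BA^1$, a lift $s\colon\Spec(R)\to\BA^1$ of $\bar s$, and a $\BG_m$-equivariant morphism $\bar f\colon\BX_{\bar R}\to Z$ (where $\BX_{\bar R}=\BX\times_{\BA^1}\Spec(\bar R)$, with the anti-diagonal $\BG_m$-action of \secref{sss:theaction}), I must produce a $\BG_m$-equivariant lift $f\colon\BX_R\to Z$ over $s$. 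Note $\BX_R=\BX_{\bar R}\times_{\Spec(\bar R)}\Spec(R)$ is a square-zero thickening of $\BX_{\bar R}$ with ideal $\bar f$-pulled back from the structure-sheaf tensored with $I$; concretely the ideal is $\CO_{\BX_{\bar R}}\otimes_{\bar R}I$.

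First I would ignore equivariance: since $Z$ is smooth, a (not necessarily equivariant) lift $f_0\colon\BX_R\to Z$ of $\bar f$ exists. Then, exactly as in the proof of \propref{p:smoothness}, the space of all lifts is a torsor over $H^0\bigl(\BX_{\bar R},\bar f^*(\Theta_Z)\otimes_{\bar R}I\bigr)$, where $\Theta_Z$ is the tangent sheaf, and the obstruction to choosing a $\BG_m$-equivariant lift lives in $H^1\bigl(\BG_m,\,H^0(\BX_{\bar R},\bar f^*(\Theta_Z)\otimes_{\bar R}I)\bigr)$. Since $H^1$ of $\BG_m$ with coefficients in any $\BG_m$-representation (over a field, or over any base in characteristic zero — rational representations of $\BG_m$ are semisimple) vanishes, the obstruction vanishes and an equivariant lift exists. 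This is the core of the argument and it is formally identical to \propref{p:smoothness}; the only new feature is that the source scheme is now $\BX_S$ rather than $\Spec(\bar R)\times\BA^1$, but $\BX_S$ is still affine over $\Spec(\bar R)$ (it is a hypersurface in $\Spec(\bar R)\times\BA^2$), so the relevant cohomology of coherent sheaves on it is concentrated in degree $0$ and the obstruction calculus goes through verbatim.

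The one point requiring a little care, and the step I expect to be the main (minor) obstacle, is the reduction to the affine/local situation and keeping track of the $\BG_m$-equivariance of the obstruction class. If $Z$ is not affine one should either first reduce to the locally linear (hence locally affine) case — covering $Z$ by $\BG_m$-stable affine opens $U_i$ and using \propref{p:closed and open}(ii) together with \lemref{l:U^+} to see that the $\wt U_i$ form an open cover of $\wt Z$, as in the proof of \thmref{t:tildeZ} — and then prove formal smoothness of each $\wt U_i\to\BA^1$; or, more cleanly, run the deformation argument directly, noting that over the \emph{affine} scheme $\BX_{\bar R}$ the obstruction-theoretic $H^1$ is purely the group cohomology $H^1(\BG_m,-)$ with no contribution from sheaf cohomology of $\BX_{\bar R}$, precisely because $\BX_{\bar R}$ is affine over $\Spec(\bar R)$. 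Either way the heart of the matter is the vanishing $H^1(\BG_m,V)=0$ for rational $\BG_m$-modules $V$, which also underlies \propref{p:smoothness}; no genuinely new idea is needed.
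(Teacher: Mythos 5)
Your proposal is correct and is essentially the paper's own proof: reduce to formal smoothness, use smoothness of $Z$ and affineness of $\BX_R$ to get a non-equivariant lift, and kill the obstruction to equivariance via the vanishing of $H^1(\BG_m,M)$ for $M=H^0(\BX_{\bar R},\bar f^*(\Theta_Z))\otimes_{\bar R}I$. The detour through a locally linear cover is unnecessary, as you yourself observe in your ``more cleanly'' alternative, which is exactly what the paper does.
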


\begin{proof}
It suffices to check formal smoothness. We proceed just as in the proof of \propref{p:smoothness}.
Let $R$ be a $k$-algebra equipped with a morphism $\Spec(R)\to\BA^1$.  Let $\bar R=R/I$, where 
$I\subset R$ is an ideal with $I^2=0$. Let $\bar f\in\Maps (\BX_{\bar R}\, , Z)^{\BG_m}$. 
We have to show that $\bar f$ can be lifted to 
an element of $\Maps (\BX_R\, , Z)^{\BG_m}$. Since $\BX_R$ is affine and $Z$ is smooth there 
is no obstruction to lifting $\bar f$ to an element of $\Maps (\BX_R, Z)$. The standard arguments 
show that the obstruction to existence of a $\BG_m$-equivariant lift is in $H^1(\BG_m\, ,M)$, where 
$M:=H^0(\BX_{\bar R}\, ,\bar f^*(\Theta_Z))\underset{\bar R}\otimes I$ and $\Theta_Z$ is the tangent bundle of $Z$.
But $H^1$ of $\BG_m$ with coefficients in any $\BG_m$-module is zero.
\end{proof}

\sssec{}

Let $Z$ be affine.  In this case, by \propref{p:2new tilde}, the morphism $\wt{p}$ identifies $\wt{Z}$ with the closed subscheme 
$\wt{p}(\wt{Z})\subset\BA^1\times Z\times Z$. By \propref{p:outside 0}, the intersection of $\wt{p}(\wt{Z})$ with the open subscheme 
$$\BG_m\times Z\times Z\subset \BA^1\times Z\times Z$$
is equal to the graph of the action map $\BG_m\times Z\to Z$.
Hence, $\wt{Z}$ contains the closure of the graph in $\BA^1\times Z\times Z$. 

\begin{rem}  \label{r:nonflat}
In general, this containment is
not an equality. E.g., this happens if  $Z$ is the hypersurface in $\BA^{2n}$ defined by the equation 
$x_1\cdot y_1+\ldots x_n\cdot y_n=0$ and the $\BG_m$-action on $Z$ is defined by 
$\lambda(x_1\, ,\dots,x_n\, ,y_1\, ,\ldots, y_n)=
(\lambda\cdot x_1\, ,\ldots,\lambda\cdot x_n\, ,\lambda^{-1}\cdot y_1\, ,\ldots,\lambda^{-1}\cdot y_n)$.
\end{rem}

However, one has the following:

\begin{prop}
If $Z$ is affine and smooth then 
$$\wt{p}(\wt{Z})=\overline{\Gamma},$$
where $\Gamma\subset\BG_m\times Z\times Z$ is the graph of of the action map $\BG_m\times Z\to Z$
and  $\overline{\Gamma}$ denotes its scheme-theoretical closure in $ \BA^1\times Z\times Z\,$.
\end{prop}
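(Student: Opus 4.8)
The plan is to deduce the equality of the two closed subschemes $\wt{p}(\wt{Z})$ and $\overline{\Gamma}$ of $\BA^1\times Z\times Z$ from the flatness of $\wt{Z}$ over $\BA^1$. By \propref{p:2new tilde} the map $\wt{p}$ is a closed embedding, so $\wt{p}(\wt{Z})$ is a closed subscheme of $\BA^1\times Z\times Z$, and by \propref{p:outside 0} its intersection with the open subscheme $\BG_m\times Z\times Z$ is exactly $\Gamma$. Hence $\wt{p}(\wt{Z})$ contains the scheme-theoretic closure $\overline{\Gamma}$ --- this is the inclusion already observed in the paragraph preceding the proposition. It therefore remains to prove the opposite inclusion $\wt{p}(\wt{Z})\subseteq\overline{\Gamma}$, i.e., that the ideal of $\wt{p}(\wt{Z})$ in $\BA^1\times Z\times Z$ already equals the ideal of $\overline{\Gamma}$.

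Write $Z=\Spec A$, so that $\BA^1\times Z\times Z=\Spec B$ with $B=k[t]\underset{k}\otimes A\underset{k}\otimes A$, and let $J\subset B$ be the ideal with $\O(\wt{Z})=B/J$. Since $\wt{Z}$ is affine (\propref{p:2new tilde}), base change along $\BG_m\hookrightarrow\BA^1$ gives
$$\O\bigl(\BG_m\underset{\BA^1}\times\wt{Z}\bigr)=\O(\wt{Z})\underset{k[t]}\otimes k[t,t^{-1}]=(B/J)[t^{-1}],$$
and by \propref{p:outside 0} the left-hand side is the coordinate ring of $\Gamma$. By definition of the scheme-theoretic closure, the ideal of $\overline{\Gamma}$ in $B$ is the kernel of $B\to\O(\Gamma)=(B/J)[t^{-1}]$; since this map factors as $B\epi B/J\to(B/J)[t^{-1}]$ (localization at $t$), that kernel equals $J$ precisely when $B/J\to(B/J)[t^{-1}]$ is injective, i.e., when $\O(\wt{Z})$ has no $t$-power torsion.

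This is where the hypothesis that $Z$ is smooth is used: by \propref{p:2smoothness} the canonical morphism $\wt{Z}\to\BA^1$ is smooth, hence flat; as $k[t]$ is a principal ideal domain, a flat $k[t]$-module is torsion-free, so $\O(\wt{Z})$ indeed has no $t$-torsion. Consequently $B/J\hookrightarrow(B/J)[t^{-1}]$, the ideal $J$ coincides with the ideal of $\overline{\Gamma}$, and $\wt{p}(\wt{Z})=\overline{\Gamma}$, as claimed.

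There is no serious obstacle here once \propref{p:2new tilde} and \propref{p:2smoothness} are in hand; the whole content is the chain ``smooth over $\BA^1$ $\Rightarrow$ flat over a PID $\Rightarrow$ no $t$-torsion''. The only step needing a little care is the identification of the restriction of $\O(\wt{Z})$ over $\BG_m$ with both a localization of $\O(\wt{Z})$ and the coordinate ring of $\Gamma$, which combines the affineness of $\wt{Z}$ with the explicit description of $\BG_m\underset{\BA^1}\times\wt{Z}$ from \propref{p:outside 0}.
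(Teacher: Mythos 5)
Your proof is correct and is exactly the argument the paper intends: its entire proof is the sentence ``this immediately follows from \propref{p:2smoothness},'' and your chain (smooth over $\BA^1$ $\Rightarrow$ flat over $k[t]$ $\Rightarrow$ no $t$-torsion in $\O(\wt{Z})$ $\Rightarrow$ the ideal of $\wt{p}(\wt{Z})$ equals that of $\overline{\Gamma}$) is precisely the omitted detail, with the reverse inclusion taken from the preceding paragraph as in the paper.
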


Indeed, this immediately follows from \propref{p:2smoothness}.

\sssec{}

We claim:

\begin{prop}   \label{p:props tilde p'}
The morphism $\wt{p}:\wt{Z}\to \BA^1\times Z\times Z$ is unramified.
\end{prop}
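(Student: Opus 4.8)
The plan is to follow the pattern of the proof of \propref{p:unrami}. By \thmref{t:tildeZ} the space $\wt{Z}$ is an algebraic space of finite type, so $\wt{p}$ is of finite type; hence it suffices to show that $\wt{p}$ is formally unramified, and since both sides are of finite type over $k$ this reduces to checking that $\wt{p}$ is injective on tangent spaces at every geometric point. Base-changing, I may assume $k$ algebraically closed and take a $k$-point $\zeta\in\wt{Z}(k)$ over $t\in\BA^1(k)$, corresponding to a $\BG_m$-equivariant morphism $f\colon\BX_t\to Z$.

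First I would identify the tangent spaces. The first component of $\wt{p}$ is the projection $\wt{Z}\to\BA^1$, so the kernel of $d\wt{p}$ at $\zeta$ is contained in $T_\zeta\wt{Z}_t$, and since $\BX_t$ is affine this tangent space is $\Hom_{\BG_m}(f^*\Omega^1_{Z/k},\CO_{\BX_t})$ (this is exactly the formula used for $Z^+$ in \propref{p:unrami}). Under this identification the differentials of $\pi_1$ and $\pi_2$ (see \secref{sss:tilde p}) send a homomorphism $\varphi$ to its fibers at the points $\sigma_1(t)=(1,t)$ and $\sigma_2(t)=(t,1)$ of \eqref{e:two sections}. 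So the claim reduces to: a $\BG_m$-equivariant $\varphi\colon f^*\Omega^1_{Z/k}\to\CO_{\BX_t}$ whose fibers at $\sigma_1(t)$ and $\sigma_2(t)$ both vanish is zero.

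To finish I would pass to the punctured curve $\oBX_t:=\BX_t-\{0\}$ ($0$ the origin of $\BX=\BA^2$). Since $\oBX_t$ is schematically dense in $\BX_t$ --- a fact already used in the proof of \propref{p:closed and open} --- the restriction map on $\BG_m$-equivariant homomorphisms into $\CO$ is injective, so it is enough to see $\varphi|_{\oBX_t}=0$. Now $\BG_m$ acts freely on $\oBX_t$, which is $\BG_m$-equivariantly a disjoint union of copies of $\BG_m$: one copy if $t\ne0$, and the two punctured axes $\BX_0^+-\{0\}$ and $\BX_0^--\{0\}$ if $t=0$. On a single free orbit $\BG_m$, a $\BG_m$-equivariant quasi-coherent sheaf is a graded module on which multiplication by the (invertible) coordinate is an isomorphism, hence isomorphic to its fiber at one point tensored with $\CO$; consequently a $\BG_m$-equivariant homomorphism into $\CO$ that kills the fiber at one point of a component is zero on that component. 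As $\sigma_1(0)=(1,0)$ and $\sigma_2(0)=(0,1)$ lie on the two different components of $\oBX_0$ (and for $t\ne0$ the point $\sigma_1(t)$ lies on the unique component), the hypotheses force $\varphi|_{\oBX_t}=0$.

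The one step that takes a little care, rather than being a real difficulty, is the identification of $T_\zeta\wt{Z}_t$ with $\Hom_{\BG_m}(f^*\Omega^1_{Z/k},\CO_{\BX_t})$ for a target $Z$ which is merely an algebraic space; this follows from the standard description of tangent spaces of mapping spaces (a deformation of $f$ over $k[\varepsilon]/(\varepsilon^2)$ is the same as an $\CO_{\BX_t}$-linear map $f^*\Omega^1_{Z/k}\to\CO_{\BX_t}$), just as in \propref{p:unrami}. The rest is bookkeeping about which $\BG_m$-orbits of $\oBX_t$ are met by the sections $\sigma_1,\sigma_2$.
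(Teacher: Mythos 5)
Your proof is correct, but it takes a different route from the paper's. The paper's own argument is a two-line reduction to earlier results: since $\wt{p}$ is of finite presentation, it suffices to check that its geometric fibers are finite and reduced, and this is done fiberwise over $\BA^1$ --- over $\BA^1-\{0\}$ it follows from \propref{p:outside 0} (there $\wt{p}$ is an isomorphism onto the graph of the action, in particular a monomorphism), while over $0$ it follows by combining \propref{p:tilde Z_0} (resp.\ \remref{r:tilde Z_0}), which identifies $\wt{p}_0$ with the composition $Z^+\underset{Z^0}\times Z^-\to Z^+\times Z^-\overset{p^+\times p^-}\longrightarrow Z\times Z$, with the already-established unramifiedness of $p^\pm$ from \propref{p:unrami}. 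You instead redo the tangent-space computation of \propref{p:unrami} directly on the curves $\BX_t$, uniformly in $t$: the kernel of $d\wt{p}$ at $\zeta$ lives in $T_\zeta\wt{Z}_t=\Hom_{\BG_m}(f^*\Omega^1_{Z/k},\CO_{\BX_t})$, restriction to the schematically dense $\oBX_t$ is injective, and on each free $\BG_m$-orbit of $\oBX_t$ an equivariant map to $\CO$ vanishing at one fiber vanishes identically; the sections $\sigma_1(t),\sigma_2(t)$ of \eqref{e:two sections} meet every orbit of $\oBX_t$ (both components of $\oBX_0$ when $t=0$), which closes the argument. All the steps check out: the reduction of unramifiedness to injectivity on tangent spaces at geometric closed points is legitimate for finite-type spaces (granting \thmref{t:tildeZ}), the deformation-theoretic description of $T_\zeta\wt{Z}_t$ is the same one used in \eqref{e:TZ+l}, and the bookkeeping of which orbits $\sigma_1(0)=(1,0)$ and $\sigma_2(0)=(0,1)$ lie on is right. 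What your approach buys is self-containedness --- it does not invoke \propref{p:outside 0}, \propref{p:tilde Z_0}, or \propref{p:unrami}, and it makes transparent exactly which geometric feature of $\BX_t$ (the two sections hitting all free orbits) is responsible; what the paper's approach buys is brevity and the reuse of statements already proved. The only point where your argument genuinely leans on something nontrivial is the identification of the tangent space of a mapping space with target an algebraic space, but this is no worse than what the paper itself does in the proof of \propref{p:unrami}.
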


\begin{proof}
The morphism $\wt{p}$ is of finite presentation (because $\wt{Z}$ and $ \BA^1\times Z\times Z$ have finite type over $k$). It remains to check the condition on the geometric fibers of $\wt{p}$.
%The property of being unramified is fiber-wise. 
Over $\BA^1-\{0\}$, it follows from \propref{p:outside 0}.
Over $0\in \BA^1$ it follows from \propref{p:unrami} combined with \propref{p:tilde Z_0} (for schemes) and 
Remark \ref{r:tilde Z_0} (for arbitrary algebraic spaces).
\end{proof}

\sssec{}

Recall that according to \propref{p:2new tilde}, if $Z$ is affine, the map $\wt{p}$ is a closed embedding. 

\medskip

Note, however, that if $Z$ is the projective line $\BP^1$ equipped with the usual $\BG_m$-action then the map
$\wt{p}:\wt{Z}\to \BA^1\times Z\times Z$ is \emph{not a closed} embedding (because, e.g., the scheme
$\wt{Z}_0=Z^+\underset{Z^0}\times Z^-$ is not proper).

\medskip

We have the following assertion:

\medskip

\begin{prop}  \label{p:Pn}
Let $Z$ be a projective space $\BP^n$ equipped with an arbitrary $\BG_m$-action.
Then the morphism $\wt{p}:\wt{Z}\to \BA^1\times Z\times Z$ is a locally closed embedding.
\end{prop}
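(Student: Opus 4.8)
\textbf{Proof proposal for \propref{p:Pn}.}
The plan is to reduce the statement for an arbitrary $\BG_m$-action on $\BP^n$ to the already-known affine case by covering $\BP^n$ with $\BG_m$-stable open affines and then globalizing. The basic structural input is \propref{p:closed and open}(ii): for a $\BG_m$-stable open $Y\subset Z$, the map $\wt{Y}\to\wt{Z}$ is an open embedding whose image is $\wt{Z}\underset{\BA^1\times Z\times Z}\times(\BA^1\times Y\times Y)$. Since any linear $\BG_m$-action on $\BP^n$ is locally linear (the standard coordinate affine charts attached to a basis of eigenvectors are $\BG_m$-stable, cf.\ \remref{r:q-proj loc lin}), choose such a cover $\BP^n=\bigcup_i U_i$ with each $U_i\cong\BA^n$ carrying a linear $\BG_m$-action. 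By \propref{p:2new tilde}, each $\wt{p}_{U_i}:\wt{U}_i\to\BA^1\times U_i\times U_i$ is a closed embedding, hence in particular a locally closed embedding; and by the proof of \thmref{t:tildeZ} the $\wt{U}_i$ form an open cover of $\wt{Z}$ (this is exactly the covering argument given there). So $\wt{p}$ restricted to each member of an open cover of the source is a locally closed embedding into $\BA^1\times Z\times Z$.

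Next I would upgrade ``locally closed embedding Zariski-locally on the source'' to ``locally closed embedding'' globally. For this, observe first that $\wt{p}$ is a monomorphism: $\BP^n$ is separated, so \propref{sss:props tilde p} applies. A monomorphism of finite type between algebraic spaces which is Zariski-locally on the source an immersion is an immersion provided the source is quasi-compact and the map is ``separated enough''; more concretely, the cleanest route is to factor $\wt{p}$ through its scheme-theoretic image or, better, to argue directly: a monomorphism of finite type is in particular unramified and separated, and by \propref{p:props tilde p'} $\wt{p}$ is unramified; an unramified monomorphism of finite type between (quasi-separated, finite-type) algebraic spaces that is locally on the source an open immersion onto a locally closed subspace is itself a locally closed immersion, because the condition of being an immersion is local on the target too once we know the map is a monomorphism and we have the local-on-source immersion property. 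I would spell this out by noting $\wt{Z}=\bigcup\wt{U}_i$ with $\wt{p}|_{\wt{U}_i}$ a closed immersion into $\BA^1\times U_i\times U_i$, which is open in $\BA^1\times\BP^n\times\BP^n$; thus $\wt{p}(\wt{U}_i)$ is closed in an open of the target, i.e.\ locally closed, and $\wt{p}^{-1}(\BA^1\times U_i\times U_i)=\wt{U}_i$ precisely by \propref{p:closed and open}(ii) (using $\wt{U}_i=\wt{Z}\underset{\BA^1\times\BP^n\times\BP^n}\times(\BA^1\times U_i\times U_i)$, which holds since $U_i\times U_i$ need not cover $\BP^n\times\BP^n$ but $\wt{p}$ of $\wt{U}_i$ lands in it). Hence the images $\wt{p}(\wt{U}_i)$ are open in $\wt{p}(\wt{Z})$ and $\wt{p}$ is an isomorphism onto its image, which is locally closed in $\BA^1\times\BP^n\times\BP^n$.

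The one genuine subtlety — and the step I expect to be the main obstacle — is verifying that $\wt{p}(\wt{U}_i)$ is \emph{open in} $\wt{p}(\wt{Z})$, equivalently that $\wt{U}_i=\wt{p}^{-1}\bigl(\wt{p}(\wt{Z})\cap(\BA^1\times U_i\times U_i)\bigr)$. This requires knowing that if a $\BG_m$-equivariant map $\BX_S\to\BP^n$ sends \emph{both} sections $\sigma_1,\sigma_2$ into $U_i$, then it sends all of $\BX_S$ into $U_i$; the issue is that the $t=0$ fibre $\wt{Z}_0=Z^+\underset{Z^0}\times Z^-$ involves maps from the coordinate cross $\BX_0$, and a priori such a map could leave $U_i$ along one of the two axes. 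I would handle this with \lemref{l:U^+}(i): on the attractor/repeller side, $U_i^\pm=(q^\pm)^{-1}(U_i^0)$, so a point of $\wt{Z}_0$ lies over $U_i\times U_i$ iff its image point $q^+(z^+)=q^-(z^-)\in Z^0$ lies in $U_i^0$, which forces $z^+\in U_i^+$ and $z^-\in U_i^-$ — exactly the argument already used in the proof of \thmref{t:tildeZ}. Combining this with \propref{p:closed and open}(ii) gives the needed identification of opens, and the proof concludes.
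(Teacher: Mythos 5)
Your overall architecture (cover $\BP^n$ by the standard $\BG_m$-stable affine charts $U_i$, use \propref{p:2new tilde} to get that $\wt{p}|_{\wt{U}_i}$ is a closed embedding into the open subset $\BA^1\times U_i\times U_i$ of the target, and then show $\wt{p}^{-1}(\BA^1\times U_i\times U_i)=\wt{U}_i$) is exactly the paper's. But the step you yourself flag as "the main obstacle" is where your argument breaks: you dispose of it by asserting that a point $(z^+,z^-)$ of $\wt{Z}_0=Z^+\underset{Z^0}\times Z^-$ "lies over $U_i\times U_i$ iff its image point $q^+(z^+)=q^-(z^-)$ lies in $U_i^0$", citing \lemref{l:U^+}(i). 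The "if" direction is fine, but the "only if" direction — which is the one you actually need — is not a consequence of \lemref{l:U^+}(i) and is false for a single factor: "lying over $U_i\times U_i$" means $p^+(z^+)\in U_i$ and $p^-(z^-)\in U_i$, whereas \lemref{l:U^+}(i) characterizes the smaller locus $U_i^{\pm}=(q^{\pm})^{-1}(U_i^0)\subset Z^{\pm}$; the containment $U_i^+\subset (p^+)^{-1}(U_i)$ is strict in general (already for $\BP^1$: a point $a\neq 0,\infty$ has $p^+$-value in $\BP^1\setminus\{\infty\}$ but attractor limit $0$, which may fail to lie in the chart). The argument you invoke from the proof of \thmref{t:tildeZ} proves the opposite implication (every point of $\wt{Z}_0$ lies in \emph{some} $(\wt{U}_i)_0$, by choosing $U_i$ to contain $\zeta=q^+(z^+)$); it does not show that a point lying over $U_i\times U_i$ lies in $(\wt{U}_i)_0$.

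What is actually needed is: if $z^+,z^-\in U_i$ and $\lim_{\lambda\to 0}\lambda*z^+=\lim_{\lambda\to\infty}\lambda*z^-=\zeta$, then $\zeta\in U_i$. This is genuinely specific to $\BP^n$ with its standard charts and is the real content of the proposition (note the paper only claims "locally closed embedding Zariski-locally on the source" for a general locally linear action). The paper isolates this as \lemref{l:Pn} and proves it by diagonalizing the action, $\lambda*(z_0:\dots:z_n)=(\lambda^{m_0}z_0:\dots:\lambda^{m_n}z_n)$, and observing that if $\zeta_i=0$ while $\zeta_j\neq 0$ then the existence of the limit of $\lambda^{m_i-m_j}(z_i/z_j)$ as $\lambda\to 0$ forces $m_i>m_j$, while the limit as $\lambda\to\infty$ forces $m_i<m_j$ — a contradiction. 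Your proof is missing this weight computation entirely, and without it the key identification $\wt{U}_i=\wt{p}^{-1}(\BA^1\times U_i\times U_i)$ is unproved. (The surrounding material on monomorphisms and unramifiedness is correct but not needed once that identification is in place.)
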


\begin{proof}
For a suitable coordinate system in $\BP^n$, the $\BG_m$-action is given by
\[
\lambda *(z_0:\ldots :z_n)=(\lambda^{m_0}\cdot z_0: \ldots :\lambda^{m_n}\cdot z_n),\quad \lambda\in\BG_m \, .
\]
Let $U_i\subset Z=\BP^n$ denote the open subset defined by the condition $z_i\ne 0$. It is affine, so by
\propref{p:2new tilde}, the canonical morphism $\wt{U}_i\to\BA^1\times U_i\times U_i$ is a closed embedding.
Thus to finish the proof of the proposition, it suffices to show that 
$\wt{p}^{-1}(\BA^1\times U_i\times U_i)=\wt{U}_i\,$. By \propref{p:outside 0}, 
$\wt{p}^{-1}(\BG_m\times U_i\times U_i)=\BG_m\underset{\BA^1}\times \wt{U}_i\,$. So it remains to prove that the 
morphism $\wt{p}_0:\wt{Z}_0\to Z\times Z$ has the following property:
$(\wt{p}_0)^{-1}(U_i\times U_i)=(\wt{U}_i)_0\,$. Identifying $\wt{Z}_0$ with $Z^+\underset{Z^0}\times Z^-$ and 
using \lemref{l:U^+}(i), we see that it remains to prove the following lemma:

\begin{lem}  \label{l:Pn}
Let $z^+,z^-\in\BP^n$. Suppose that
\[
\lim_{\lambda\to 0}\lambda*z^+=\lim_{\lambda\to\infty }\lambda*z^-=\zeta\, .
\]
If $z^+,z^-\in\ U_i$ then $\zeta\in U_i\,$.
\end{lem}

\end{proof}

\begin{proof}[Proof of \lemref{l:Pn}]

Write $z^+=(z^+_0:\ldots :z^+_n)$, $z^-=(z^-_0:\ldots :z^-_n)$, $\zeta =(\zeta_0:\ldots :\zeta_n)$.
We have $z^{\pm}_i\ne 0$, and the problem is to show that $\zeta_i\ne 0$.

\medskip

Suppose that $\zeta_i= 0$. Choose $j$ so that $\zeta_j\ne 0\,$. Then $z^{\pm}_j\ne 0$ and
\[
\lim_{\lambda\to 0}\lambda^{m_i-m_j}\cdot (z_i/z_j)=\zeta_i/\zeta_j=0, \quad
\lim_{\lambda\to \infty}\lambda^{m_i-m_j}\cdot (z_i/z_j)=\zeta_i/\zeta_j=0\, .
\]
This means that $m_i>m_j$ and $m_i<m_j$ at the same time, which is impossible.
\end{proof}

\sssec{}

As a corollary of \propref{p:Pn}, combined with \propref{p:closed and open}, 
we obtain that if $Z$ admits a $\BG_m$-equivariant locally closed embedding into
a projective space $\BP(V)$, where $\BG_m$ acts linearly on $V$, then the morphism 
$\wt{p}:\wt{Z}\to \BA^1\times Z\times Z$ is a locally closed 
embedding. (Recall, however,  that the map $p^{\pm}:Z^{\pm}\to Z$ is typically not a locally closed embedding, 
see Example \ref{ex:P^1}.) 

\sssec{}

More generally, suppose that the $\BG_m$-action on $Z$ is locally linear. Then the proof of
\thmref{t:tildeZ} shows that in this case the map $\wt{p}$ is, \emph{Zariski locally on the source}, a
locally closed embedding.

\medskip

However, even this is not the case for a general $Z$:

\medskip

Consider the curve obtained from $\BP^1$ by gluing $0$ with $\infty$. 
Equip $Z$ with the $\BG_m$-action induced by the usual action on $\BP^1$. Then $\wt{p}:\wt{Z}\to \BA^1\times Z\times Z$ is 
not a locally closed embedding, locally on the source.  In fact, already $\wt{p}_0:\wt{Z}_0\to Z\times Z$ is not a locally closed embedding 
locally on the source (because the maps $p^{\pm}:Z^{\pm}\to Z$ are not).

\ssec{Some fiber products}  \label{ss:fiber products}

In this subsection we let $Z$ be an algebraic space of finite type, equipped with an action of $\BG_m$.

\sssec{}

In \secref{sss:tilde p} we defined morphisms $\pi_1,\pi_2:\wt{Z}\to Z$.
In \secref{s:Verifying} we will need to consider the fiber product 
\begin{equation}    \label{e:fibered1}
Z^-\underset{Z}\times \wt{Z}\, ,
\end{equation}
formed using $\pi_1:\wt{Z}\to Z$, and the fiber product
\begin{equation}     \label{e:fibered2}
\wt{Z}\underset{Z}\times Z^+ \, ,
\end{equation}
formed using $\pi_2:\wt{Z}\to Z$.

\sssec{}  \label{sss:defining the 2 maps}
Consider the composition
\begin{equation}   \label{e:embedding1}
\BA^1\times Z^+\to\wt{Z^+}=\wt{Z^+}\underset{Z^+}\times Z^+ \to \wt{Z}\underset{Z}\times Z^+,
\end{equation}
where the first arrow is the morphism \eqref{e:beta} for the space $Z^+$ and the second arrow is induced by 
the morphism $p^+:Z^+\to Z$. Consider also the similar composition
\begin{equation}   \label{e:embedding2}
\BA^1\times Z^-\to\wt{Z^-}=Z^-\underset{Z^-}\times\wt{Z^-}  \to Z^-\underset{Z}\times\wt{Z},
\end{equation}
where the first arrow is the morphism \eqref{e:2beta} for the space $Z^-$. 
In \secref{s:Verifying} we will need the following result.

\begin{prop}   \label{p:2open embeddings}
The compositions \eqref{e:embedding1} and \eqref{e:embedding2} are open embeddings.
\end{prop}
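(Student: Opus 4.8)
The plan is to prove \propref{p:2open embeddings} by reducing to the case where $Z$ is affine, then verifying the claim by direct computation. First I would observe that the statement is symmetric under the inversion isomorphism $\BA^1 \iso \BA^1_-$ exchanging attractors and repellers, so it suffices to treat the composition \eqref{e:embedding1}, i.e.\ to show that $\BA^1 \times Z^+ \to \wt Z \underset{Z}\times Z^+$ is an open embedding. By \propref{p:closed and open}(ii) (for an open $\BG_m$-stable subspace) together with \lemref{l:U^+} describing attractors of open/closed $\BG_m$-stable subspaces, both the source and target of \eqref{e:embedding1} are compatible with passing to a $\BG_m$-stable open cover of $Z^0$ (and hence, via $q^\pm$, of $Z^\pm$), so — invoking the locally-linear structure provided by \thmref{t:attractors} and its proof — one reduces to the case where $Z$ itself is affine, say $Z = \Spec(A)$ with $A$ a $\BZ$-graded algebra.

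In the affine case everything is explicit. We have $Z^+ = \Spec(A^+)$ and $Z^- = \Spec(A^-)$ from \secref{sss:attractors-affine} and \secref{ss:repeller}, and $\wt Z$ is the closed subscheme of $\BA^1 \times Z \times Z$ described in (the proof of) \propref{p:2new tilde}: writing $Z = \Spec k[x]$ in the one-variable case with $\lambda$ acting by $\lambda^n$, $\wt Z$ is cut out by $x_2 = t^n x_1$ for $n\ge 0$ and by $x_1 = t^{-n} x_2$ for $n \le 0$, and the general affine case is obtained by taking products and closed subschemes. The key point to check is then: on the open locus of $\wt Z \underset{Z}\times Z^+$ — where the third coordinate lies in $Z^+$ and is glued to $\pi_1$ of the $\wt Z$-factor — the equations defining $\wt Z$ force the $\BG_m$-action restricted to the hyperbola to extend, so that this fiber product is cut down to $\BA^1 \times Z^+$, and the locus is open because it is the preimage of an open set under a projection. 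Concretely, \propref{p:2contracting} already identifies $\wt{Z^+}$ with the graph of the $\BA^1$-action on $Z^+$, so $\BA^1 \times Z^+ \iso \wt{Z^+}$, and one must check that the map $\wt{Z^+} \to \wt Z \underset{Z}\times Z^+$ induced by $p^+: Z^+ \to Z$ (via $\pi_1$) is an open embedding.

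The cleanest route, which I would pursue after the affine reduction, is to exhibit $\BA^1 \times Z^+$ inside $\wt Z \underset{Z}\times Z^+$ as the preimage of an open subset under a morphism to a larger space whose openness is already known. Specifically, I would map $\wt Z \underset{Z}\times Z^+ \to \wt Z \underset{Z}\times Z$ using $p^+$, note that by \propref{p:closed and open}(ii) and the fact that the $\BG_m$-action on $Z^+$ is contracting this identifies $\wt Z \underset{Z}\times Z^+$ with an open subspace of something, and then use \propref{p:2contracting} to identify the relevant locus. Alternatively, and perhaps more transparently: the fiber product $\wt Z \underset{Z}\times Z^+$ classifies triples $(t, f, g)$ where $f: \BX_S \to Z$ is $\BG_m$-equivariant, $g \in Z^+(S) = \GMaps(\BA^1, Z)(S)$, and $f\circ \sigma_1 = p^+ \circ g$; the map \eqref{e:embedding1} sends $(t,h) \in \BA^1 \times Z^+$ to the triple where $f$ is $h$ precomposed with the natural contraction $\BX_S \to \BA^1_S$ (available since $h$ is $\BA^1$-equivariant), $g = h$. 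One then checks that the condition "$f$ factors through the $\BA^1$-contraction $\BX_S \to \BA^1_S \to Z$" is an open condition on $S$ — this is essentially \lemref{l:U^+}(i) applied suitably, or can be seen by the schematic-density-of-$\oBX_S$ argument used repeatedly in \secref{s:deg} — and that it exactly carves out the image of \eqref{e:embedding1}.

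The main obstacle I anticipate is not the affine computation, which is mechanical, but rather making the reduction to the affine case fully rigorous: one needs to know that forming $\wt Z \underset{Z}\times Z^+$ and $\BA^1\times Z^+$ both behave correctly under a $\BG_m$-stable open cover of $Z$ that does \emph{not} necessarily pull back from a cover of $Z^0$, and to confirm that the relevant open loci glue. The cleanest way around this is to cover $Z^0$ (which is closed in $Z$ by \propref{p:Z^0closed}) by affines, pull back along $q^+$ to get $\BG_m$-stable affine opens of $Z^+$ and $Z^-$, and — crucially — use \lemref{l:U^+}(i) to see that over such an open $U \subset Z$ one has $U^+ \underset{U^0}\times U^- = (q^+)^{-1}(U^0) \times_{\cdots}$, exactly as in the proof of \thmref{t:tildeZ}; the openness being a local property on the target of \eqref{e:embedding1}, one is then genuinely reduced to $Z$ affine, where \propref{p:2new tilde} and \propref{p:2contracting} finish the job.
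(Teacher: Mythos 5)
Your outline follows essentially the same route as the paper: use local linearity together with \propref{p:closed and open} and \lemref{l:U^+} to reduce to the affine case (where the map is in fact an isomorphism), then pass to a closed embedding into a linear representation and to products of one-dimensional representations, and finish in the contracting and anti-contracting cases using \propref{p:2new tilde}, \propref{p:2contracting} (resp. \propref{p:dilating} and \corref{c:contractive}); the final verification is indeed mechanical.

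There is, however, one concrete slip you should fix before pursuing your ``alternative, more transparent'' route. You describe an $S$-point of $\wt{Z}\underset{Z}\times Z^+$ as a triple $(t,f,g)$ with $f\circ \sigma_1=p^+\circ g$, i.e.\ you form the fiber product using $\pi_1$. But \eqref{e:fibered2} is formed using $\pi_2$ (it is \eqref{e:fibered1}, the one involving $Z^-$, that uses $\pi_1$), and this asymmetry is exactly what makes the proposition true. Already for $Z=\BA^1$ with $\lambda$ acting by $\lambda^{-1}$ one has $Z^+=\{0\}$ and $\wt{Z}=\{x_1=t\cdot x_2\}$; then $\wt{Z}\underset{Z,\,\pi_1}\times Z^+$ is the coordinate cross $\{t\cdot x_2=0\}$ and the image of $\BA^1\times Z^+$ is the axis $\{x_2=0\}$, which is closed but \emph{not} open in the cross, whereas $\wt{Z}\underset{Z,\,\pi_2}\times Z^+$ is just $\BA^1$ and the map is an isomorphism. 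So the matching condition must be $f\circ\sigma_2=p^+\circ g$ for the attractor (and $f\circ\sigma_1=p^-\circ g$ for the repeller); with the condition as you wrote it, the statement you would be proving is false. The same caution applies to your appeal to a ``symmetry'' between \eqref{e:embedding1} and \eqref{e:embedding2}: inverting the $\BG_m$-action exchanges not only $Z^+$ with $Z^-$ but also $\pi_1$ with $\pi_2$, and the argument only goes through because both swaps happen simultaneously.
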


Note that unlike the situation of \propref{p:Cartesian}, these embeddings are usually not closed.

\begin{rem}
By Propositions~\ref{p:2contracting} and \ref{p:dilating}, the maps $\BA^1\times Z^+\to\wt{Z^+}$ and 
$\BA^1\times Z^-\to\wt{Z^-}$ are isomorphisms, so \propref{p:2open embeddings} means that the morphisms
\[
\wt{Z^+}\to\wt{Z}\underset{Z}\times Z^+ ,\quad \wt{Z^-}\to Z^-\underset{Z}\times\wt{Z} 
\]
are open embeddings.
\end{rem}

\begin{rem}
In the course of the proof of \propref{p:2open embeddings} we will see that if $Z$ is affine,
then the maps \eqref{e:embedding1} and \eqref{e:embedding2} are isomorphisms.
\end{rem}

\sssec{}

We will prove \propref{p:2open embeddings} assuming that the action of $\BG_m$ on $Z$ is 
locally linear.  The general case is considered in \cite[Prop. 3.1.3]{Dr}.

\medskip

We will show that \eqref{e:embedding1} is an open embedding. The case of \eqref{e:embedding2} 
is similar.

\begin{proof}

First, \propref{p:closed and open}(ii) and \lemref{l:U^+}(i) allow to 
reduce the assertion to the case when $Z$ is affine.  In the
affine case we will show that the map \eqref{e:embedding1} is an isomorphism.

\medskip

Next, it follows from  \propref{p:closed and open}(i) and \lemref{l:U^+}(ii)
that if $Z\to Z'$ is a closed embedding
and \eqref{e:embedding1} is an isomorphism for $Z'$, then it is also an isomorphism for $Z$.
This reduces the assertion to the case when $Z$ is a vector space equipped with a linear action
of $\BG_m$.

\medskip

Third, it is easy to see that if $Z=Z_1\times Z_2$, and \eqref{e:embedding1} is an isomorphism for $Z_1$
and $Z_2$, then it is an isomorphism for $Z$. This reduces the assertion further to the case when either
the action of $\BG_m$ on $Z$ or its inverse is contracting.

\medskip

Suppose that the action is contracting. In this case $Z^+\simeq Z$ by \propref{p:contracting},
and under this identification the map 
$$\wt{Z^+}\underset{Z^+}\times Z^+ \to \wt{Z}\underset{Z}\times Z^+,$$
appearing in \eqref{e:embedding1}, is the identity map. 

\medskip

Suppose that the inverse of the given $\BG_m$-action on $Z$ is contracting. By \corref{c:contractive}(ii),
we can identify $Z^+\simeq Z^0$, and by \propref{p:dilating}, $\wt{Z}\simeq \BA^1\times Z$. Under
these identifications, the map \eqref{e:embedding1} is the identity map
$$\BA^1\times Z^0\to (\BA^1\times Z)\underset{Z}\times Z^0\simeq \BA^1\times Z^0\,.$$

\end{proof}

\section{Braden's theorem} \label{s:Braden1}

From now on we will assume that the ground field $k$ has characteristic 0 (because we will be working with D-modules).

\medskip

The goal of this section is to state Braden's theorem (\thmref{t:braden original}) in the context of D-modules,
and reduce it to another statement (\thmref{t:Braden adj}) that says that certain two functors are adjoint.

\medskip

Braden's theorem applies to any algebraic space $Z$ of finite type over $k$, equipped with an action
of $\BG_m$. The reader may prefer to restrict his attention to the case of $Z$ being a scheme or even a separated
scheme. 

\medskip

Furthermore, because of Remark \ref{r:q-proj loc lin}, for most applications, it is sufficient to consider the case 
when the  $\BG_m$-action on $Z$ is locally linear, which would make the present paper self-contained, as the main technical results in Sects. 
\ref{s:actions}-\ref{s:deg} were proved only in this case.  

\ssec{Statement of Braden's theorem: the original formulation}  \label{ss:original statement}

\sssec{}  \label{sss:mon}

Let $G$ be an algebraic group. If $Z$ is an algebraic space of finite type equipped with a $G$-action, then
$$\Dmod(Z)^{G\on{-mon}}\subset \Dmod(Z)$$ stands for the
full subcategory generated by the essential image of the pullback functor 
$\Dmod(Z/G)\to \Dmod(Z)$, where $Z/G$ denotes the quotient \emph{stack}. Here one can use either the $!$- or the $\bullet$-pullback: this
makes no difference as the morphism $Z\to Z/\BG_m$ is smooth, and hence the two pullback functors differ by the cohomological shift by $2\cdot\dim(G)$.

\medskip

Note that if the $G$-action is trivial then 
$\Dmod(Z)^{G\on{-mon}}=\Dmod(Z)$ (because the morphism $Z\to Z/G$ admits a section).

\sssec{}

From now on let $Z$ be an algebraic space of finite type equipped with a $\BG_m$-action. Consider the commutative diagram
%Consider the set-up of \secref{sss:+-}. Consider the corresponding diagram
\begin{equation} \label{e:square with arrow}
\xy
(0,0)*+{Z^+\underset{Z}\times Z^-}="X";
(30,0)*+{Z^+}="Y";
(0,-30)*+{Z^-}="Z";
(30,-30)*+{Z.}="W";
(-20,20)*+{Z^0}="U";
{\ar@{->}_{p^-} "Z";"W"};
{\ar@{->}^{p^+} "Y";"W"};
{\ar@{->}_{'p^-} "X";"Y"};
{\ar@{->}^{'p^+} "X";"Z"};
{\ar@{->}_{j} "U";"X"};
{\ar@{->}_{i^-} "U";"Z"};
{\ar@{->}^{i^+} "U";"Y"};
\endxy
\end{equation}
(The definitions of $Z^0$, $Z^{\pm}$, $i^{\pm}$, and $p^{\pm}$ were given in Sects.~\ref{ss:fixed_points}, 
\ref{ss:attr}, and \ref{ss:repeller}.) 

\medskip

Recall that by \propref{p:Cartesian}, the morphism 
$j:Z^0\to Z^+\underset{Z}\times Z^-$ is an open embedding (and also a closed one).

\medskip

We consider the categories
$$\Dmod(Z)^{\BG_m\on{-mon}},\,\, \Dmod(Z^+)^{\BG_m\on{-mon}},\,\, \Dmod(Z^-)^{\BG_m\on{-mon}}$$ and   
$$\Dmod(Z^0)^{\BG_m\on{-mon}}=\Dmod(Z^0).$$

Consider the functors
$$(p^+)^!:\Dmod(Z)^{\BG_m\on{-mon}}\to \Dmod(Z^+)^{\BG_m\on{-mon}} \text{ and } 
(i^-)^!:\Dmod(Z^-)^{\BG_m\on{-mon}}\to \Dmod(Z^0).$$

%Consider also the functors 
The formalism of pro-categories (see Appendix \ref{s:pro}) also provides the functors 

$$(p^-)^\bullet: \Dmod(Z)^{\BG_m\on{-mon}}\to \on{Pro}(\Dmod(Z^-)^{\BG_m\on{-mon}})$$  and 
$$(i^+)^\bullet: \Dmod(Z^+)^{\BG_m\on{-mon}}\to \on{Pro}(\Dmod(Z^0)),$$
left adjoint in the sense of Sect. A.3 to
$$(p^-)_\bullet:\Dmod(Z^-)^{\BG_m\on{-mon}}\to \Dmod(Z)^{\BG_m\on{-mon}}$$ and 
$$(i^+)_\bullet: \Dmod(Z^0)\to  \Dmod(Z^+)^{\BG_m\on{-mon}},$$
respectively. 

\sssec{}

Consider the composed functors
$$(i^+)^\bullet\circ (p^+)^! \text{ and } (i^-)^!\circ (p^-)^\bullet,\quad 
\Dmod(Z)^{\BG_m\on{-mon}}\to \on{Pro}(\Dmod(Z^0)).$$

\medskip

They are called the functors of \emph{hyperbolic restriction}.  

\sssec{}

We claim that there is a canonical natural transformation
\begin{equation} \label{e:Braden trans}
(i^+)^\bullet\circ (p^+)^! \to (i^-)^!\circ (p^-)^\bullet.
\end{equation}
Namely, the natural transformation \eqref{e:Braden trans} is obtained via the $((i^+)^\bullet,(i^+)_\bullet)$-adjunction 
from the natural transformation
\begin{equation}  \label{e:the natural transformation}
(p^+)^! \to (i^+)_\bullet\circ (i^-)^!\circ (p^-)^\bullet,
\end{equation}
defined in terms of diagram \eqref{e:square with arrow} as follows. 

\medskip

Note that since $j:Z^0\to Z^+\underset{Z}\times Z^-$ is an \emph{open embedding} (see \propref{p:Cartesian}), 
the functor $j^!$ is left adjoint to 
$j_\bullet\,$. Now define the morphism \eqref{e:the natural transformation} to be the composition
\begin{multline*}
(p^+)^!\to (p^+)^!\circ (p^-)_\bullet \circ (p^-)^\bullet \simeq ({}'p^-)_\bullet\circ ({}'p^+)^!\circ (p^-)^\bullet\to \\
\to ({}'p^-)_\bullet\circ j_\bullet\circ j^! \circ ({}'p^+)^! \circ (p^-)^\bullet
\simeq  (i^+)_\bullet\circ (i^-)^!\circ (p^-)^\bullet,
\end{multline*}
where $(p^+)^!\circ (p^-)_\bullet\simeq ({}'p^-)_\bullet\circ ({}'p^+)^!$
is the base change isomorphism and the map 
$$\on{Id}\to j_\bullet\circ j^!$$
comes from the $(j^!,j_\bullet)$-adjunction.
%, which is due to the fact that $j:Z^0\to Z^+\underset{Z}\times Z^-$ is an 
%\emph{open embedding} by \propref{p:Cartesian}.

\sssec{}

We are now ready to state Braden's theorem:

\begin{thm} \label{t:braden original}  
The functors 
$$(i^+)^\bullet\circ (p^+)^! \text{ and } (i^-)^!\circ (p^-)^\bullet, \quad \Dmod(Z)^{\BG_m\on{-mon}}\to \on{Pro}(\Dmod(Z^0))$$
take values in $\Dmod(Z^0)\subset \on{Pro}(\Dmod(Z^0))$
and the map
\eqref{e:Braden trans}
is an isomophism.
\end{thm}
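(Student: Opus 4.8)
The strategy is to follow the reformulation outlined in the introduction: to prove \thmref{t:braden original} it suffices to establish \thmref{t:Braden adj}, i.e.\ that the functors
\[
(q^+)_\bullet\circ (p^+)^!\quad\text{and}\quad (p^-)_\bullet\circ (q^-)^!,
\]
restricted to $\Dmod(Z)^{\BG_m\on{-mon}}$, form an adjoint pair with an explicitly given co-unit. First I would record why the hyperbolic restriction functors even land in $\Dmod(Z^0)$ rather than only in $\on{Pro}(\Dmod(Z^0))$: the maps $i^\pm\colon Z^0\to Z^\pm$ are closed embeddings that are sections of the contraction maps $q^\pm$, and on the monodromic subcategories one has canonical isomorphisms $(i^+)^\bullet\simeq (q^+)_\bullet$ and $(i^-)^!\simeq (q^-)_!$ (this is the "first observation" of \secref{sss:behind}; it is a consequence of the $\BG_m$-action on $Z^\pm$ being contracting, so that $q^\pm$ is an affine fibration with zero section $i^\pm$, and on monodromic objects $!$- and $\bullet$-pushforward along such a contraction agree with restriction to the zero section). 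Granting this, the target of both composed functors is genuinely $\Dmod(Z^0)$, and the natural transformation \eqref{e:Braden trans} gets identified with the comparison map \eqref{e:Braden preview q}, which in turn is the transpose of the co-unit \eqref{e:counit preview} under the $\bigl((q^-)_!,(q^-)^!\bigr)$-adjunction. So everything reduces to the adjunction statement.

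The second and main step is to construct the \emph{unit}
\[
\on{Id}_{\Dmod(Z)^{\BG_m\on{-mon}}}\;\longrightarrow\;(p^-)_\bullet\circ (q^-)^!\circ (q^+)_\bullet\circ (p^+)^!\big|_{\Dmod(Z)^{\BG_m\on{-mon}}}.
\]
This is where the geometry of \secref{s:deg} enters. Using the family $\wt Z\to\BA^1$ with $\wt Z_1\simeq Z$ and $\wt Z_0\simeq Z^+\underset{Z^0}\times Z^-$, together with the projections $\pi_1,\pi_2\colon\wt Z\to Z$, one takes $\CF\in\Dmod(Z)^{\BG_m\on{-mon}}$, pulls it back by $\pi_2^!$ to a $\BG_m$-monodromic object on $\wt Z$, and pushes forward along the morphism $\wt Z\to\BA^1\times Z$ (first component and $\pi_1$); call the result $\CK\in\Dmod(\BA^1\times Z)$. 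One then applies the \emph{specialization map} $\on{Sp}_\CK\colon\CK_1\to\CK_0$ — the simplified nearby-cycles-to-$!$-fiber map available for $\BG_m$-monodromic objects on $\BA^1\times(-)$ — and interprets $\CK_1$ as $\CF$ itself (via $\wt Z_1\simeq Z$ and $\wt p_1=\Delta_Z$) and $\CK_0$, via $\wt Z_0\simeq Z^+\underset{Z^0}\times Z^-$ and the Cartesian/open-closed structure of \propref{p:Cartesian}, as $(p^-)_\bullet\circ (q^-)^!\circ (q^+)_\bullet\circ (p^+)^!(\CF)$. Here \propref{p:2open embeddings} is the key geometric input controlling how the composed correspondence $Z^+\underset{Z^0}\times Z^-$ interacts with $\pi_1,\pi_2$; one also needs base change along the diagram \eqref{e:over Z times Z} and along the Cartesian squares of \propref{p:closed and open}.

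The final step is to verify the two triangle identities for the pair (unit, co-unit) — this is the content of \secref{s:Verifying}. I expect \emph{this} to be the main obstacle: the co-unit is easy and "obvious", but checking that composing it with the specialization-derived unit gives the identity requires unwinding the functoriality of $\on{Sp}$ with respect to the maps \eqref{e:embedding1}--\eqref{e:embedding2} from \propref{p:2open embeddings}, and matching it against the restriction of the family $\wt Z$ over the two coordinate axes $\BX_0^\pm$, which themselves carry the structures that produce $Z^+$ and $Z^-$. Concretely the triangle identity will come down to: the specialization of the family $\wt{Z^+}$ (resp.\ $\wt{Z^-}$), which by \propref{p:2contracting} (resp.\ \propref{p:dilating}) is just $\BA^1\times Z^+$ (resp.\ $\BA^1\times Z^-$), is the \emph{identity} specialization, and the open embeddings of \propref{p:2open embeddings} are compatible with $\on{Sp}$. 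Once one triangle identity is checked in the triangulated setting it upgrades automatically to the DG setting, as noted in the conventions. Everything else — representability of $\wt Z$, $Z^\pm$, the push-out descriptions of $\wt Z_0$ — is supplied by \thmref{t:tildeZ}, \thmref{t:attractors}, and \propref{p:tilde Z_0}.
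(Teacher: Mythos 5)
Your proposal is correct and follows essentially the same route as the paper: reduce \thmref{t:braden original} to the adjunction statement via the contraction principle (\propref{p:simple Braden}), construct the unit from the interpolation family $\wt{Z}$ together with the specialization map, and verify the triangle identities using \propref{p:2open embeddings} and Propositions \ref{p:2contracting}, \ref{p:dilating} plus the fact that specialization of a constant family is the identity. The one point your sketch glosses over is that the monodromicity needed to define $\on{Sp}_\CK$ is with respect to the $\BG_m$ acting on the $\BA^1$-factor (supplied by the full $\BG_m\times\BG_m$-action of \secref{sss:action of G_m^2}, not the anti-diagonal action you invoke), which is precisely why the paper passes to the quotient stacks $\CZ=Z/\BG_m$ and encodes the functors by kernels there.
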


\begin{rem}
As we will see in \secref{sss:contr}, the fact that the functor $(i^+)^\bullet\circ (p^+)^!$ takes values in 
$\Dmod(Z^0)\subset \on{Pro}(\Dmod(Z^0))$ is easy to prove. 
The fact that the functor $(i^-)^!\circ (p^-)^\bullet$
takes values in $\Dmod(Z^0)$ will follow  \emph{a posteriori} from the isomorphism with 
$(i^+)^\bullet\circ (p^+)^!$. 
\end{rem}

\ssec{Contraction principle} 

\sssec{}

Assume for a moment that the $\BG_m$-action on $Z$ extends\footnote{By Remark \ref{r:contracting}, such extension is unique if it exists.} 
to an action of the monoid $\BA^1$. (Informally, this means 
that the $\BG_m$-action on $Z$ contracts it onto the fixed point locus $Z^0$.)

\begin{prop}  \label{p:simple Braden}  
In the above situation we have the following:
\smallskip

\noindent{\em(a)} The left adjoint $i^\bullet:\Dmod(Z)\to \on{Pro}(\Dmod(Z^0))$ of $i_\bullet$ sends
$\Dmod(Z)^{\BG_m\on{-mon}}$ to $\Dmod(Z^0)$, and we have a canonical isomorphism
$$i^\bullet|_{\Dmod(Z)^{\BG_m\on{-mon}}}\simeq q_\bullet|_{\Dmod(Z)^{\BG_m\on{-mon}}} \; .$$
More precisely, for each $\CF\in \Dmod(Z)^{\BG_m\on{-mon}}$
the natural map
$$q_\bullet(\CF)\to q_\bullet\circ i_\bullet\circ i^\bullet(\CF)=(q\circ i)_\bullet\circ i^\bullet(\CF)=i^\bullet(\CF)$$
is an isomorphism.

\smallskip

\noindent{\em(b)} The left adjoint $q_!:\Dmod(Z)\to \on{Pro}(\Dmod(Z^0))$ of $q^!$ sends
$\Dmod(Z)^{\BG_m\on{-mon}}$ to $\Dmod(Z^0)$, and we have a canonical isomorphism
$$q_!|_{\Dmod(Z)^{\BG_m\on{-mon}}}\simeq i^!|_{\Dmod(Z)^{\BG_m\on{-mon}}} \; .$$
More precisely, for each $\CF\in \Dmod(Z)^{\BG_m\on{-mon}}$
the natural map
$$i^!(\CF)\to i^!\circ q^!\circ q_!(\CF) =(q\circ i)^!\circ q_!(\CF) =q_! (\CF)$$
is an isomorphism.
\end{prop}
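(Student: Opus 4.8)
The plan is to reduce (a) to a vanishing statement via the recollement triangle, reduce that to the case of a linear action on a vector space by dévissage, settle the linear case using the ``conic'' nature of monodromic sheaves, and then deduce (b) from (a) by Verdier duality. Throughout I will work on \emph{holonomic} objects, where $i^\bullet$ and $q_!$ are honest functors; since $\Dmod(Z)^{\BG_m\on{-mon}}$ is generated under colimits by its holonomic part and all the functors below preserve colimits, this suffices, and the assertions that $i^\bullet$ and $q_!$ carry monodromic objects into $\Dmod(Z^0)$ come out as by-products.

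For (a): let $U:=Z\setminus Z^0$, with its open $\BG_m$-stable immersion $j_U\colon U\hookrightarrow Z$, and use the recollement triangle $(j_U)_!\, j_U^\bullet\CF\to\CF\to i_\bullet i^\bullet\CF$. Applying $q_\bullet$ and using $q_\bullet\circ i_\bullet=(q\circ i)_\bullet=\id$, we see that the map $q_\bullet\CF\to i^\bullet\CF$ of (a) is an isomorphism if and only if $q_\bullet((j_U)_!\, j_U^\bullet\CF)=0$. Thus \emph{(a) is equivalent to the statement that $q_\bullet$ annihilates every $\BG_m$-monodromic object of $\Dmod(Z)$ whose $\bullet$-restriction to $Z^0$ vanishes}.

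To prove this vanishing we reduce to a linear model. Since the action is contracting, $q$ is affine by \thmref{t:attractors}(ii), so $Z$ has a cover by $\BG_m$-stable affine opens $q^{-1}(V_\alpha)$, with $\{V_\alpha\}$ an affine cover of $Z^0$; by \v Cech descent we reduce to $Z=\Spec(A)$ with $A$ non-negatively graded and $A_0=\Gamma(Z^0,\CO)$. Writing $A$ as a graded quotient of $A_0[x_1,\dots,x_n]$ with the $x_i$ of strictly positive degree gives a $\BG_m$-equivariant closed embedding $Z\hookrightarrow Z^0\times V$, with $V$ a vector space carrying a linear $\BG_m$-action of strictly positive weights; by \lemref{l:U^+}(ii) and base change we reduce to $Z=Z^0\times V$, and, since here the quotient stack is the product $Z^0\times(V/\BG_m)$, the Künneth formula reduces us to $Z=V$, $Z^0=\{0\}$. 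For $Z=V$ the vanishing is the classical contraction principle for linear $\BG_m$-actions. By the reformulation above it amounts to $q_\bullet\CF=0$ for $\CF=(j_V)_!\CG$ with $\CG$ monodromic on $V\setminus\{0\}$, and, using the recollement triangle once more together with $q\circ i=\id$, to the assertion that the natural map $q_\bullet (j_V)_*\CG\to i^\bullet (j_V)_*\CG$ is an isomorphism; this holds because a monodromic sheaf on $V\setminus\{0\}$ is conic, so that both its de Rham cohomology and that of its $*$-extension are computed on an arbitrarily small (formal) neighbourhood of $0$ — concretely, one descends $\CG$ along $V\setminus\{0\}\to\BP$ (a weighted projective space) and applies $\dr$-acyclicity of affine space fiberwise. (Alternatively this case may be cited from \cite{Br}, or handled by induction on $\dim V$ via a Bia\l ynicki--Birula decomposition.)

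Finally, (b) follows from (a) by Verdier duality: duality preserves $\Dmod(Z)^{\BG_m\on{-mon}}$, intertwines $i_\bullet$ with $i_\bullet$, $q_!$ with $q_\bullet$, and $i^!$ with $i^\bullet$, and is an equivalence on the holonomic generators, so the isomorphism $q_\bullet\simeq i^\bullet$ dualizes to $q_!\simeq i^!$ on generators, whence on all monodromic objects by continuity. I expect the main obstacle to be organizational rather than conceptual: one must keep honest track of the fact that $i^\bullet$ and $q_!$ are a priori only $\on{Pro}(\Dmod(Z^0))$-valued, so that the ``natural maps'' above have to be read through the adjunctions of Appendix~\ref{s:pro} and one must check that the functors in play commute with the colimits used in the dévissage — together with the analytic details of the linear case, where the heuristic ``monodromic $=$ conic'' must be made precise in the algebraic setting.
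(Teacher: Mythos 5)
First, note that the paper does not actually prove \propref{p:simple Braden}: it cites \cite[Theorem C.5.3]{DrGa2}, whose argument works directly with the action map $\BA^1\times Z\to Z$ of the contracting monoid (in the spirit of the specialization construction of \secref{ss:specialization}) and never needs to reduce to a linear model. So your dévissage is a genuinely different route. Your reductions are mostly sound: the recollement reformulation, the localization on $Z^0$ (legitimate since $q$ is affine and both $q_\bullet$ and $i^\bullet$ commute with restriction to opens of $Z^0$), and the passage to a closed $\BG_m$-embedding $Z\hookrightarrow Z^0\times V$ are all standard. The K\"unneth step and the colimit bookkeeping need more than you give them (one must know that $\Dmod(Z^0\times V/\BG_m)$ is generated by external products, and --- more seriously --- your standing reduction to holonomic objects requires knowing that $\Dmod(Z)^{\BG_m\on{-mon}}$ is generated under colimits by its \emph{holonomic} part; the compact generators of $\Dmod(Z/\BG_m)$ are coherent, not holonomic, so this is a real claim, not a formality).

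The genuine gap is the linear case, which is where all the content lives. Your justification --- ``a monodromic sheaf on $V\setminus\{0\}$ is conic, so its cohomology is computed on an arbitrarily small neighbourhood of $0$'' --- is precisely the statement being proved, restated as topological intuition; it is not an argument in the algebraic/D-module setting. The descent to the weighted projective quotient applies only to objects literally pulled back from $(V\setminus\{0\})/\BG_m$ (acceptable, since these generate), but the fibers of $V\setminus\{0\}\to\BP$ are $\BG_m$-orbits, not affine spaces, so ``$\on{dR}$-acyclicity of affine space fiberwise'' does not compute either $\Gamma_{\dr}(V,(j_V)_*\CG)$ or $i^\bullet(j_V)_*\CG$; if what you intend is to pass to the total space of $\CO_{\BP}(-1)$ and contract onto the zero section, that contraction \emph{is} the proposition in the rank-one case, so you have only reduced $\dim V$ to $1$ and still owe the computation there. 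Finally, the fallback of citing \cite{Br} is not automatic: Braden works with constructible sheaves, and monodromic D-modules in the sense of \secref{sss:mon} need not be regular in the directions transverse to the orbits (e.g.\ the pullback of $e^{u}$ along $(x,y)\mapsto x/y$ is monodromic for weights $(1,1)$), so Riemann--Hilbert does not transport his result. The duality argument for (b) is fine once (a) is established on a holonomic generating class.
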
  

For the proof see \cite[Theorem C.5.3]{DrGa2}. 

\sssec{}

Note that we can reformulate point (a) of \propref{p:simple Braden} above as the statement that 
the (iso)morphism
$$q_\bullet\circ i_\bullet\to \on{Id}_{\Dmod(Z^0)}$$
defines the co-unit of an adjunction between
$$q_\bullet:\Dmod(Z)^{\BG_m\on{-mon}}\rightleftarrows \Dmod(Z^0):i_\bullet.$$

\medskip

Similarly, point (b) of \propref{p:simple Braden} can be reformulated as the statement that 
the (iso)morphism
$$i^!\circ q^! \to \on{Id}_{\Dmod(Z^0)}$$
defines the co-unit of an adjunction between 
$$i^!:\Dmod(Z)^{\BG_m\on{-mon}} \rightleftarrows \Dmod(Z^0):q^!.$$

\ssec{Reformulation of Braden's theorem} \label{ss:Reformulation of Braden}

\sssec{}  \label{sss:contr}

We return to the set-up of \thmref{t:braden original}.  By \propref{p:simple Braden}, we obtain 
canonical isomorphisms
$$(i^+)^\bullet\simeq (q^+)_\bullet \text{ and } (i^-)^!\simeq (q^-)_!.$$

In particular, we obtain that the functor 
$$(i^+)^\bullet\circ (p^+)^!\simeq (q^+)_\bullet \circ (p^+)^!$$
sends $\Dmod(Z)^{\BG_m\on{-mon}}$ to $\Dmod(Z^0)$.

\medskip

In addition, we see that the functor 
$$(i^-)^!\circ (p^-)^\bullet\simeq (q^-)_!\circ (p^-)^\bullet$$
is the left adjoint functor to $(p^-)_\bullet\circ (q^-)^!$. 

\sssec{}   \label{sss:defining co-unit}

Now define a natural transformation
\begin{equation} \label{e:Braden co-unit}
\left((q^+)_\bullet \circ (p^+)^!\right)\circ \left((p^-)_\bullet\circ (q^-)^!\right)\to \on{Id}_{\Dmod(Z^0)}
\end{equation}
to be the composition
\begin{multline*}
(q^+)_\bullet \circ (p^+)^! \circ (p^-)_\bullet\circ (q^-)^!\simeq 
(q^+)_\bullet \circ  ({}'p^-)_\bullet\circ ({}'p^+)^! \circ (q^-)^! \to \\
\to (q^+)_\bullet \circ  ({}'p^-)_\bullet\circ j_\bullet\circ j^! \circ ({}'p^+)^! \circ (q^-)^! 
\simeq (q^+)_\bullet \circ (i^+)_\bullet \circ (i^-)^!\circ  (q^-)^! \simeq \\
\simeq (q^+\circ i^+)_\bullet\circ (q^-\circ i^-)^!=
\on{Id}_{\Dmod(Z^0)}.
\end{multline*}

The above natural transformation corresponds to the diagram

$$
\xy
(-20,0)*+{Z}="X";
(20,0)*+{Z^0.}="Y";
(0,20)*+{Z^+}="Z";
(-40,20)*+{Z^-}="W";
(-60,0)*+{Z^0}="U";
(-20,40)*+{Z^-\underset{Z}\times Z^+}="V";
(-20,85)*+{Z^0}="T";
{\ar@{->}^{p^+} "Z";"X"};
{\ar@{->}_{q^+} "Z";"Y"};
{\ar@{->}_{p^-} "W";"X"};
{\ar@{->}^{q^-} "W";"U"}; 
{\ar@{->}_{'p^-} "V";"Z"};
{\ar@{->}^{'p^+} "V";"W"};
{\ar@{->}_{j} "T";"V"};
{\ar@{->}^{i^-} "T";"W"};
{\ar@{->}_{i^+} "T";"Z"};
{\ar@{->}^{\on{id}} "T";"Y"};
{\ar@{->}_{\on{id}} "T";"U"};
\endxy
$$

\sssec{}

The natural transformation \eqref{e:Braden co-unit} gives rise to (and is determined by) a natural transformation 
\begin{equation} \label{e:reform Braden trans}
(q^+)_\bullet \circ (p^+)^!\to \left((p^-)_\bullet\circ (q^-)^!\right)^L\simeq (q^-)_!\circ (p^-)^\bullet.
\end{equation}
Here $\left((p^-)_\bullet\circ (q^-)^!\right)^L$ denotes the left adjoint of 
$(p^-)_\bullet\circ (q^-)^!$ in the sense of Sect. A.3.

\medskip

It follows by diagram chase that the following diagram of natural transfomations commutes:
$$
\CD
(q^+)_\bullet \circ (p^+)^!   @>{\text{\eqref{e:reform Braden trans}}}>>  (q^-)_!\circ (p^-)^\bullet   \\
@A{\sim}AA   @A{\sim}AA  \\
(i^+)^\bullet \circ (p^+)^!   @>{\text{\eqref{e:Braden trans}}}>>  (i^-)^!\circ (p^-)^\bullet 
\endCD
$$

\medskip

Hence, the assertion of \thmref{t:braden original} follows from the next one:

\begin{thm} \label{t:Braden adj}
The natural transformation \eqref{e:Braden co-unit} is the co-unit
of an adjunction for the functors
$$(q^+)_\bullet \circ (p^+)^!:\Dmod(Z)^{\BG_m\on{-mon}}\rightleftarrows \Dmod(Z^0):(p^-)_\bullet\circ (q^-)^!$$
\end{thm}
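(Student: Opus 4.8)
The plan is to produce the unit of the claimed adjunction and then verify the two triangle identities, the co-unit being the transformation~\eqref{e:Braden co-unit}. Write $F:=(q^+)_\bullet\circ(p^+)^!$ and $G:=(p^-)_\bullet\circ(q^-)^!$ for the two functors in question, and $\eps\colon FG\to\on{Id}_{\Dmod(Z^0)}$ for~\eqref{e:Braden co-unit}; all operations here are ordinary $!$-pullbacks and $\bullet$-pushforwards along $\BG_m$-equivariant morphisms, so $F$ and $G$ genuinely preserve $\BG_m$-monodromicity and no pro-categories are needed.

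To build the unit I will use the space $\wt Z$ of \secref{s:deg}, with its morphisms $\pi_1,\pi_2\colon\wt Z\to Z$ and its projection to $\BA^1$; let $\wt\pi_2\colon\wt Z\to\BA^1\times Z$ denote the morphism whose components are this projection and $\pi_2$. Given $\CF\in\Dmod(Z)^{\BG_m\on{-mon}}$, set $\CK:=(\wt\pi_2)_\bullet\,\pi_1^!(\CF)\in\Dmod(\BA^1\times Z)$. Using the pieces of structure in \secref{sss:action of G_m^2}(i)--(ii) — restricting the $\BG_m\times\BG_m$-action on $\wt Z$ to a suitable one-parameter subgroup along which $\pi_1$ intertwines it with the given action on $Z$, while $\wt\pi_2$ intertwines it with an action scaling the $\BA^1$-factor and trivial on $Z$ — one checks that $\CK$ is $\BG_m$-monodromic for the scaling of $\BA^1$, so that the specialization map $\on{Sp}_\CK\colon\CK_1\to\CK_0$ of \secref{s:unit} is available. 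Base change along the Cartesian squares cutting out the fibers $\wt Z_1,\wt Z_0\subset\wt Z$, together with the identification $\wt Z_1\simeq Z$ (under which $\pi_1=\pi_2=\id_Z$), the identification $\wt Z_0\simeq Z^+\underset{Z^0}\times Z^-$ of \propref{p:tilde Z_0}, and the square~\eqref{e:over Z times Z}, then yields canonical isomorphisms $\CK_1\simeq\CF$ and
\[
\CK_0\;\simeq\;(\pi_2|_{\wt Z_0})_\bullet(\pi_1|_{\wt Z_0})^!\CF\;\simeq\;(p^-)_\bullet(q^-)^!(q^+)_\bullet(p^+)^!\CF\;=\;GF\CF,
\]
the middle isomorphism being base change for the Cartesian square $\wt Z_0=Z^+\underset{Z^0}\times Z^-$. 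Hence $\on{Sp}_\CK$ produces $\eta_\CF\colon\CF\to GF\CF$, and the functoriality of $\on{Sp}$ in $\CK$ makes $\eta$ a natural transformation $\on{Id}\to GF$.

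It then remains to check $\eps F\circ F\eta=\on{Id}_F$ and $G\eps\circ\eta G=\on{Id}_G$. For the first I would spread $F$ out over $\BA^1$: apply $\mathbf F:=(\id_{\BA^1}\times q^+)_\bullet\circ(\id_{\BA^1}\times p^+)^!$ to $\CK$. Since $!$-pullback and $\bullet$-pushforward satisfy base change for D-modules, $\mathbf F$ commutes with $!$-restriction to the fibers of $\BA^1\times Z\to\BA^1$, so $(\mathbf F\CK)_1\simeq F\CF$ and $(\mathbf F\CK)_0\simeq F(GF\CF)$, and the compatibility of $\on{Sp}$ with $(p^+)^!$ and $(q^+)_\bullet$ identifies $\on{Sp}_{\mathbf F\CK}$ with $F(\eta_\CF)$. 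This reduces the first identity to showing $\eps_{F\CF}\circ\on{Sp}_{\mathbf F\CK}=\on{Id}_{F\CF}$. Unwinding $\mathbf F\CK$ by base change, the geometry that arises is the fiber product $\wt Z\underset{Z}\times Z^+$ formed along $\pi_2$, and here \propref{p:2open embeddings} is exactly the input needed: by \eqref{e:embedding1} it exhibits $\wt{Z^+}$ — which by \propref{p:2contracting} is the constant family $\BA^1\times Z^+$ — as an open subspace of $\wt Z\underset{Z}\times Z^+$ containing all the fibers that intervene. Replacing the fiber product by this constant family trivializes the relevant specialization map, and feeding the result back through the definition of $\eps$ in~\eqref{e:Braden co-unit} — whose use of the open embedding $j$ of \propref{p:Cartesian} is precisely what legitimizes the restriction to $\wt{Z^+}$, and whose remaining structure is controlled by the contraction principle \propref{p:simple Braden} — collapses the composite to the identity. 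The second identity is handled by an analogous (though not strictly symmetric) argument, now producing the fiber product $Z^-\underset{Z}\times\wt Z$ from $\eta_{G\CG}$ and invoking the other half of \propref{p:2open embeddings}, i.e. the open embedding $\wt{Z^-}\simeq\BA^1\times Z^-\hookrightarrow Z^-\underset{Z}\times\wt Z$ of \eqref{e:embedding2}, together with \propref{p:dilating}.

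The routine part is Step~1: once the specialization map is in hand (its construction, naturality, and compatibility with the six operations being a separate but standard package), it is pure base change. The main obstacle is the pair of triangle identities, and the single geometric fact that makes them go through is \propref{p:2open embeddings}, which lets one trade the non-flat family $\wt Z$, after restriction over $Z^{\pm}$, for the trivial family $\BA^1\times Z^{\pm}$, on which the specialization map carries no information; the zig-zag composites then collapse to identities via base change, \propref{p:Cartesian}, and the contraction principle. Keeping the numerous base-change isomorphisms and the identification~\eqref{e:over Z times Z} mutually coherent along the zig-zags is the delicate bookkeeping that must be carried out in \secref{s:Verifying}.
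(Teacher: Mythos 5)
Your proposal follows essentially the same route as the paper: the unit is produced from the interpolating family $\wt{Z}$ via the specialization map, with the fibers at $t=1$ and $t=0$ identified with $\on{Id}$ and $(p^-)_\bullet\circ(q^-)^!\circ(q^+)_\bullet\circ(p^+)^!$ respectively, and the triangle identities are reduced, via \propref{p:2open embeddings}, to the fact that specialization is the identity on a constant family. The only difference is packaging: the paper first passes to the quotient stacks $\CZ=Z/\BG_m$ and encodes the unit as a map of kernels (using the renormalized pushforward $f_\blacktriangle$ so that base change is legitimate on stacks), whereas you work object-by-object on $Z$ itself, which is adequate at the triangulated level, as the paper itself notes.
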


\ssec{The equivariant version}   \label{ss:equivariant version}

\sssec{}   \label{sss:Consider now}

Consider now the stacks 
$$\CZ:=Z/\BG_m\, ,\,\, \CZ^0:=Z^0/\BG_m\, ,\,\, \CZ^{\pm}:=Z^{\pm}/\BG_m$$  
and the morphisms
$$ \sfp^{\pm}:\CZ^{\pm}\to \CZ\, ,\,\,\sfq^{\pm}:\CZ^{\pm}\to\CZ^0$$%\,\, \sfi^{\pm}:\CZ^0\to \CZ^{\pm}$$
induced by the morphisms
$$p^{\pm}:Z^{\pm}\to Z ,\,\,q^{\pm}:Z^{\pm}\to Z^0$$% ,\,\, i^{\pm}:Z^0\to Z^{\pm}$$
from Sects.~\ref{sss:structures} and \ref{ss:repeller}.

\sssec{}

The construction of the natural transformation \eqref{e:Braden co-unit} can be rendered verbatim to produce a natural transformation
\begin{equation} \label{e:Braden co-unit equiv}
\left((\sfq^+)_\bullet \circ (\sfp^+)^!\right)\circ \left((\sfp^-)_\bullet\circ (\sfq^-)^!\right)\to \on{Id}_{\Dmod(\CZ^0)}.
\end{equation}

\medskip

We will prove the following version of \thmref{t:Braden adj}:

\begin{thm} \label{t:Braden adj equiv}
The natural transformation \eqref{e:Braden co-unit equiv} is the co-unit
of an adjunction for the functors
$$(\sfq^+)_\bullet \circ (\sfp^+)^!:\Dmod(\CZ)\rightleftarrows \Dmod(\CZ^0):(\sfp^-)_\bullet\circ (\sfq^-)^!$$
\end{thm}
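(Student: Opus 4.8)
Since the co-unit \eqref{e:Braden co-unit equiv} is already in hand, the plan is to reduce the theorem to two tasks: (a) construct a unit natural transformation
\[
u\colon\on{Id}_{\Dmod(\CZ)}\longrightarrow\bigl((\sfp^-)_\bullet\circ(\sfq^-)^!\bigr)\circ\bigl((\sfq^+)_\bullet\circ(\sfp^+)^!\bigr),
\]
and (b) verify that $u$ together with \eqref{e:Braden co-unit equiv} satisfies the two triangle identities of an adjunction. The main geometric input will be the interpolation $\wt{Z}$ of \secref{s:deg}, together with the specialization map $\on{Sp}$ for $\BG_m$-monodromic objects.

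For (a) I would proceed as follows. Let $\wt\CZ:=\wt{Z}/\BG_m$ be the quotient by the $\BG_m$-action of \secref{sss:anti-diagonal} (which acts trivially on $\BA^1$ and diagonally on the two copies of $Z$); it carries induced morphisms $\pi_1,\pi_2\colon\wt\CZ\to\CZ$ and $\wt{f}\colon\wt\CZ\to\BA^1$, and a residual $\BG_m$-action that scales $\BA^1$ and acts trivially on $\CZ$ through $\pi_1$ and $\pi_2$. Given $\CF\in\Dmod(\CZ)$, set
\[
\CK:=\bigl(\wt{f},\pi_2\bigr)_\bullet\circ\pi_1^!(\CF)\ \in\ \Dmod(\BA^1\times\CZ).
\]
By the residual $\BG_m$-action, $\CK$ is $\BG_m$-monodromic along the $\BA^1$-factor, so the map $\on{Sp}_\CK\colon\CK|_{\{1\}\times\CZ}\to\CK|_{\{0\}\times\CZ}$ is defined. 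Base change along the Cartesian squares cutting out the fibres $\wt\CZ_1$ and $\wt\CZ_0$, combined with \propref{p:outside 0} (identifying $\wt\CZ_1$ with $\CZ$ and $\pi_1,\pi_2$ with $\on{id}$), with \propref{p:tilde Z_0} (identifying $\wt\CZ_0$ with $\CZ^+\underset{\CZ^0}\times\CZ^-$ and $\pi_1,\pi_2$ with $\sfp^+,\sfp^-$ after the two projections), and with the affineness of $\sfq^+$ coming from \thmref{t:attractors}(ii) (which validates base change along the square defining $\CZ^+\underset{\CZ^0}\times\CZ^-$), yields canonical isomorphisms
\[
\CK|_{\{1\}\times\CZ}\simeq\CF,\qquad\CK|_{\{0\}\times\CZ}\simeq(\sfp^-)_\bullet\circ(\sfq^-)^!\circ(\sfq^+)_\bullet\circ(\sfp^+)^!(\CF).
\]
Hence $\on{Sp}_\CK$ becomes a map $\CF\to(\sfp^-)_\bullet\circ(\sfq^-)^!\circ(\sfq^+)_\bullet\circ(\sfp^+)^!(\CF)$, and its evident functoriality in $\CF$ defines $u$.

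For (b), write $F:=(\sfq^+)_\bullet\circ(\sfp^+)^!$ and $G:=(\sfp^-)_\bullet\circ(\sfq^-)^!$, and let $\varepsilon\colon FG\to\on{Id}_{\Dmod(\CZ^0)}$ be the co-unit \eqref{e:Braden co-unit equiv}. One has to check that the composites
\[
F\xrightarrow{\,Fu\,}FGF\xrightarrow{\,\varepsilon F\,}F\qquad\text{and}\qquad G\xrightarrow{\,uG\,}GFG\xrightarrow{\,G\varepsilon\,}G
\]
are the identity. The plan is to evaluate both composites through the geometry of $\wt\CZ$: applying $F$ (resp.\ $G$) to the family $\CK$ amounts to pulling back along $\sfp^+$ through $\pi_1$ (resp.\ along $\sfp^-$ through $\pi_2$) and then pushing forward along $\sfq^+$ (resp.\ $\sfq^-$), and here \propref{p:2open embeddings} is what makes this tractable: it exhibits $\BA^1\times\CZ^+$ (resp.\ $\BA^1\times\CZ^-$) as an open substack of the pulled-back family on which the interpolation is the \emph{trivial} $\BA^1$-family, so $\on{Sp}$ there is the identity. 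Combining this with the contraction principle (Propositions~\ref{p:2contracting} and \ref{p:dilating}, together with \corref{c:contractive} describing the resulting $\BG_m$-actions on $\CZ^{\pm}$) and with the fact that $\varepsilon$ is itself built from the open-closed embedding $j$ of \propref{p:Cartesian}, one checks that each composite collapses to $\on{id}_F$, resp.\ $\on{id}_G$.

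The expected main obstacle is part (b). Constructing $\CK$, identifying its two fibres, and defining $u$ are essentially routine once \secref{s:deg} and the specialization map are available; the real difficulty lies in the triangle identities, which require pinning down $\on{Sp}_\CK$ precisely enough to track its interaction with the several base-change isomorphisms describing $\CK|_{\{0\}\times\CZ}$ and $\CK|_{\{1\}\times\CZ}$, and to control the bookkeeping of the two commuting $\BG_m$-symmetries of $\wt{Z}$ under the reductions furnished by \propref{p:2open embeddings} and the contraction principle. In short, the content of \thmref{t:Braden adj equiv} is exactly that the unit manufactured from specialization obeys the zig-zag identities, and this is where the geometry of \secref{s:deg} is genuinely used.
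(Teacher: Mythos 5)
Your proposal follows the paper's own strategy: the unit is manufactured from the specialization map applied to the interpolating family $\wt{Z}$ of \secref{s:deg} (with the fibers over $1$ and $0$ identified via Propositions \ref{p:outside 0} and \ref{p:tilde Z_0} plus base change), and the triangle identities are verified by using \propref{p:2open embeddings} to pass to an open substack on which the family becomes the constant one, where specialization is the identity by \secref{sss:specialization for constant}. The only real difference is presentational: the paper encodes all the functors by kernels in $\Dmod(\CZ\times\CZ)$ (using $f_\blacktriangle$ and the equivalence of \secref{sss:functors and kernels}) and runs both the construction of the unit and the verification of the zig-zag identities at the level of kernels, which automates the naturality and base-change bookkeeping that you propose to handle objectwise.
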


Let us prove that \thmref{t:Braden adj equiv} implies \thmref{t:Braden adj}. 

\begin{proof}

We need to show that for $\CM\in \Dmod(Z)^{\BG_m\on{-mon}}$ and $\CN\in \Dmod(Z^0)^{\BG_m\on{-mon}}$, the map
$$\Hom_{\Dmod(Z)^{\BG_m\on{-mon}}}\left(\CM,(p^-)_\bullet\circ (q^-)^!(\CN)\right)\to
\Hom_{\Dmod(Z^0)^{\BG_m\on{-mon}}}\left((q^+)_\bullet \circ (p^+)^!(\CM),\CN\right),$$
induced by \eqref{e:Braden co-unit}, is an isomorphism.

\medskip

By the definition of $\Dmod(Z)^{\BG_m\on{-mon}}$, we can assume that $\CM$ is the $\bullet$-pullback of some
$\CM'\in \Dmod(\CZ)$. Let $\CN'$ denote the $\bullet$-direct image of $\CN$ under the canonical map $Z^0\to \CZ^0$.

\medskip

Since all the maps $Z\to \CZ$, $Z^0\to \CZ^0$ and $Z^{\pm}\to \CZ^{\pm}$
are smooth, we have the following commutative diagram (with the vertical arrows being isomorphisms by adjunction): 
$$
\CD
\Hom\left(\CM,(p^-)_\bullet\circ (q^-)^!(\CN)\right)  @>>> 
\Hom\left((q^+)_\bullet \circ (p^+)^!(\CM),\CN\right)  \\
@V{\sim}VV   @VV{\sim}V  \\
\Hom\left(\CM',(\sfp^-)_\bullet\circ (\sfq^-)^!(\CN')\right)   @>>>
\Hom\left((\sfq^+)_\bullet \circ (\sfp^+)^!(\CM'),\CN'\right).
\endCD
$$

Hence, if the bottom horizontal arrow is an isomorphism, then so is the top one. 

\end{proof}

\section{Construction of the unit}  \label{s:unit}

In this section we will perform the main step in the proof of \thmref{t:Braden adj equiv}; namely, we will
construct the \emph{unit} for the adjunction between the functors 
$(\sfq^+)_\bullet \circ (\sfp^+)^!$ and $(\sfp^-)_\bullet\circ (\sfq^-)^!$.

\ssec{The specialization map}   \label{ss:specialization}

In this subsection we describe the general set-up for the specialization map.
The concrete situation in which this set-up will 
be applied is described in Sects. \ref{sss:concrete situation}-\ref{sss:concrete} below.

\sssec{}

Let $\CY$ be an algebraic \emph{stack} \footnote{We use the conventions from \cite[Sect. 1.1]{DrGa1}
for algebraic stacks. We refer the reader to \cite[Sect. 6]{DrGa1} for a review of the DG category
of D-modules on algebraic stacks of finite type.} of finite type. Consider the stack $\BA^1\times \CY$, and let 
$\iota_1$ and $\iota_0$ be the maps $\CY\to \BA^1 \times \CY$ corresponding to 
the points $1$ and $0$ of $\BA^1$, respectively. Let $\pi$ denote the projection $\BA^1\times \CY\to \CY$. 

\medskip

Let $\CK$ be an object of $\Dmod(\BA^1\times \CY)^{\BG_m\on{-mon}}$, where 
$$\Dmod(\BA^1\times \CY)^{\BG_m\on{-mon}}\subset \Dmod(\BA^1\times \CY)$$
is the full subcategory generated by the essential image of the pullback functor
$$\Dmod\left((\BA^1/\BG_m)\times \CY\right)\to \Dmod(\BA^1\times \CY).$$

\medskip

Set
$$\CK_1:=\iota_1^!(\CK), \quad\quad \CK_0:=\iota_0^!(\CK).$$

\medskip

We are going to construct a canonical map
\begin{equation} \label{e:specialization}
\on{Sp}_\CK:\CK_1\to \CK_0\, ,
\end{equation} 
which will depend functorially on $\CK$. We will call it the \emph{specialization map}.

\begin{rem}
The map \eqref{e:specialization} is a simplified version of the specialization map
that goes from the nearby cycles functor to the !-fiber. 
\end{rem} 

\sssec{}

First, note that \propref{p:simple Braden}(b) and the definition of the category $\Dmod(-)$ for an algebraic stack 
\footnote{According to \cite[Sect. 6.1.1]{DrGa1}, an object of $\Dmod(\CY )$ is a ``compatible collection" of objects of $\Dmod(S )$ 
for all schemes $S$ of finite type mapping to $\CY$.}  imply that the functor $\pi_!$, left adjoint to 
$\pi^!:\Dmod(\CY)\to \Dmod(\BA^1\times \CY)$, is defined on the subcategory
$\Dmod(\BA^1\times \CY)^{\BG_m\on{-mon}}$, and the natural transformation
$$\iota_0^!\to \iota_0^!\circ \pi^!\circ \pi_!\simeq \pi_!$$
is an isomorphism. 

\medskip

Now, we construct the natural transformation \eqref{e:specialization} as
$$\iota_1^!(\CK)\simeq \pi_!\circ (\iota_1)_!\circ \iota_1^!(\CK) \to \pi_!(\CK)\simeq \iota_0^!(\CK),$$
where the morphism  $\pi_!\circ (\iota_1)_!\circ \iota_1^!(\CK)\to \pi_!(\CK)$ comes from the
$((\iota_1)_!,\iota_1^!)$-adjunction. Note that the functor $(\iota_1)_!$ is well-defined 
because $\iota_1$ is a closed embedding.

\sssec{}  \label{sss:specialization for constant}

It is easy to see that if $\CK=\omega_{\BA^1}\times \CK_\CY$ for some $\CY\in \Dmod(\CY)$,
then the map \eqref{e:specialization} is the identity endomorphism of
$$\iota_1^!(\CK)\simeq \CK_\CY\simeq \iota_0^!(\CK).$$

\sssec{}  \label{sss:functoriality of specialization}

It is also easy to see from the construction that the natural transformation \eqref{e:specialization}
is functorial with respect to maps between algebraic stacks in the following sense.

\medskip

Let $f:\CY'\to \CY$ be a map. Then for $\CK':=(\id_{\BA^1}\times f)^!(\CK)$ the diagram
$$
\CD
\CK'_1  @>{\on{Sp}_{\CK'}}>>  \CK'_0  \\
@A{\sim}AA   @AA{\sim}A  \\ 
f^!(\CK_1)  @>{f^!(\on{Sp}_\CK)}>>   f^!(\CK_0)  \\
\endCD
$$
commutes. 

\medskip

Let now $f$ be representable and quasi-compact. Then for $\CK'\in \Dmod(\BA^1\times \CY')^{\BG_m\on{-mon}}$
and 
$$\CK:=(\id_{\BA^1}\times f)_\bullet(\CK'),$$ the diagram
$$
\CD
\CK_1  @>{\on{Sp}_\CK}>>  \CK_0  \\
@V{\sim}VV   @VV{\sim}V  \\ 
f_\bullet(\CK'_1)  @>{f_\bullet(\on{Sp}_{\CK'})}>>   f_\bullet(\CK'_0)  \\
\endCD
$$
also commutes. 

\ssec{Digression: functors given by kernels} \label{ss:kernels}

\sssec{}
According to  \cite[Definition 1.1.8]{DrGa1}, an algebraic stack of finite type over $k$ is said to be QCA if the automorphism 
groups of its geometric points are affine.

\medskip

If $f:\CY\to \CY'$ is a morphism between QCA stacks then one has a canonically defined functor
$$f_\blacktriangle:\Dmod(\CY)\to \Dmod(\CY')$$
defined  in \cite[Sect. 9.3]{DrGa1}.

\begin{rem}
The functor $f_\blacktriangle$ is a ``renormalized version" of the usual functor $f_\bullet$ of de Rham
direct image (see \cite[Sect. 7.4]{DrGa1}). The problem with the functor $f_\bullet$ is that it is very
poorly behaved unless the morphism $f$ is representable \footnote{Or, more generally, \emph{safe}
in the sense of \cite[Definition 10.2.2]{DrGa1}.}. For example, it fails to satisfy the projection
formula and \emph{is not compatible with compositions},  see \cite[Sect. 7.5]{DrGa1} for more
details. The functor $f_\blacktriangle$ cures all these drawbacks, and it equals the usual functor 
$f_\bullet$ if $f$ is representable.
\end{rem}

\sssec{}  \label{sss:functors and kernels}

Let $\CY_1$ and $\CY_2$ be QCA algebraic stacks. For an object $\CQ\in \Dmod(\CY_1\times \CY_2)$, consider the functor
$$\sF_\CQ:\Dmod(\CY_1)\to \Dmod(\CY_2),\quad \CM\mapsto (\on{pr}_2)_\blacktriangle(\on{pr}_1^!(\CM)\sotimes \CQ),$$
where $\on{pr}_i:\CY_1\times \CY_2\to \CY_i$ are the two projections, and $\sotimes$ is the usual tensor product
on the category of D-modules. 

\medskip

We will refer to $\CQ$ as the \emph{kernel} of the functor $\sF_\CQ$. 

\medskip

In fact, it follows from \cite[Corollary 8.3.4]{DrGa1}
that the assignment $\CQ\rightsquigarrow \sF_\CQ$ defines an equivalence between the category $\Dmod(\CY_1\times \CY_2)$
and the DG category of \emph{continuous} \footnote{Recall that a functor between cocomplete DG categories is said to be 
continuous if it commutes with arbitrary direct sums.} functors $\Dmod(\CY_1)\to \Dmod(\CY_2)$. 

\medskip

For example, if $\CY_1=\CY_2=\CY$, then for
$$\CQ:=(\Delta_\CY)_\blacktriangle(\omega_\CY)\in \Dmod(\CY\times \CY)$$
the corresponding functor $\sF_\CQ$ is the identity functor on $\Dmod(\CY)$. 
Here $\omega_{\CY}\in \Dmod(\CY)$ denotes the dualizing complex on a stack $\CY$. 

\sssec{}   \label{sss:corr}

More generally, let
\begin{equation} \label{e:corr}
\xy
(-15,0)*+{\CY_1}="X";
(15,0)*+{\CY_2}="Y";
(0,15)*+{\CY_0}="Z";
{\ar@{->}_{f_1} "Z";"X"};
{\ar@{->}^{f_2} "Z";"Y"};
\endxy
\end{equation}
be a diagram of QCA algebraic stacks. Set
$$\CQ:=(f_1\times f_2)_\blacktriangle(\omega_{\CY_0})\in \Dmod(\CY_1\times \CY_2).$$

\medskip

Then, by the projection formula, the functor $\sF_\CQ$ identifies with $(f_2)_\blacktriangle\circ (f_1)^!$.

\sssec{}   \label{sss:two routes}

The reader who is reluctant to use the (potentially unfamiliar) functor $f_\blacktriangle$ can proceed along either
of the following two routes: 

\medskip

\noindent(i) The usual functor of direct image $f_\bullet$ is well-behaved when restricted to the subcategory
$\Dmod(\CY)^+$ of bounded below (=eventually coconnective) objects. It is easy to see that working with
this subcategory would be sufficient for the proof of \thmref{t:Braden adj equiv}.  \footnote{Note, however,
that if one redefines the assignment $\CQ\rightsquigarrow \sF_\CQ$ using $(\on{pr}_2)_\bullet$ instead of
$(\on{pr}_2)_\blacktriangle$ then one obtains a \emph{different} functor, even when evaluated on 
$\Dmod(\CY_1)^+$.}

\medskip

This strategy can be used in order to adapt the proof of \thmref{t:Braden adj equiv} to the context of $\ell$-adic
sheaves. 

\medskip

\noindent(ii) One can use the following assertion.
\begin{lem}
Suppose that the morphism $f_2:\CY_0\to\CY_2$ is representable. Then 

\smallskip

\noindent{\em(i)} 
The kernel $\CQ:=(f_1\times f_2)_\blacktriangle(\omega_{\CY_0})$ is canonically isomorphic to 
$(f_1\times f_2)_\bullet(\omega_{\CY_0})$; 

\smallskip

\noindent{\em(ii)} The functor
$$\sF_\CQ:\Dmod(\CY_1)\to \Dmod(\CY_2),\quad 
\CM\mapsto (\on{pr}_2)_\blacktriangle(\on{pr}_1^!(\CM)\sotimes \CQ)\simeq (f_2)_\blacktriangle\circ (f_1)^! (\CM )$$
is canonically isomorphic  to the functor
$$\CM\mapsto (\on{pr}_2)_\bullet(\on{pr}_1^!(\CM)\sotimes \CQ).$$
\end{lem}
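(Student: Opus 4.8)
The plan is to reduce both assertions to the single geometric statement that $g:=f_1\times f_2\colon\CY_0\to\CY_1\times\CY_2$ is representable; once this is known, the rest is formal manipulation with direct image functors. To prove representability of $g$, I would regard it as a morphism over $\CY_2$, where the source maps to $\CY_2$ via $f_2$ and the target via $\on{pr}_2$, and factor it in the usual way as
\[
\CY_0\;\overset{\Gamma}{\longrightarrow}\;\CY_0\underset{\CY_2}\times(\CY_1\times\CY_2)\;\overset{q}{\longrightarrow}\;\CY_1\times\CY_2 ,
\]
where $q$ is the base change of $f_2$ along $\on{pr}_2$ and $\Gamma$ is the graph of $g$ relative to $\CY_2$. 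Then $q$ is representable because $f_2$ is, while $\Gamma$ is the base change of the relative diagonal $\Delta_{(\CY_1\times\CY_2)/\CY_2}\cong\Delta_{\CY_1}\times\id_{\CY_2}$, which is representable because the diagonal of the algebraic stack $\CY_1$ is. Hence $g$ is a composition of representable morphisms, so it is representable. Part (i) then follows immediately: for a representable morphism $g_\blacktriangle$ coincides with $g_\bullet$ (the property recalled in the Remark above), so $\CQ=(f_1\times f_2)_\blacktriangle(\omega_{\CY_0})\simeq(f_1\times f_2)_\bullet(\omega_{\CY_0})$.

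For part (ii) I would first apply the projection formula for the representable morphism $g$ — which is valid for $g_\bullet$ precisely because $g$ is representable — together with $\on{pr}_1\circ g=f_1$ and the fact that $\omega_{\CY_0}$ is the monoidal unit for $\sotimes$, to rewrite
\[
\on{pr}_1^!(\CM)\sotimes\CQ\;=\;\on{pr}_1^!(\CM)\sotimes g_\bullet(\omega_{\CY_0})\;\simeq\;g_\bullet\bigl(g^!\on{pr}_1^!(\CM)\bigr)\;\simeq\;g_\bullet\bigl(f_1^!(\CM)\bigr).
\]
Applying $(\on{pr}_2)_\blacktriangle$ and using that $f_\blacktriangle$ is always compatible with compositions, that $g_\bullet=g_\blacktriangle$, and that $\on{pr}_2\circ g=f_2$ is representable, this gives $\sF_\CQ(\CM)\simeq(\on{pr}_2)_\blacktriangle\circ g_\blacktriangle(f_1^!\CM)\simeq(f_2)_\blacktriangle(f_1^!\CM)\simeq(f_2)_\bullet(f_1^!\CM)$, consistent with the identification $\sF_\CQ\simeq(f_2)_\blacktriangle\circ f_1^!$ of \secref{sss:corr}. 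Applying instead $(\on{pr}_2)_\bullet$ to $g_\bullet(f_1^!\CM)$ and invoking compatibility of $f_\bullet$ with composition along safe morphisms — representable morphisms being safe — gives $(\on{pr}_2)_\bullet\bigl(\on{pr}_1^!(\CM)\sotimes\CQ\bigr)\simeq(\on{pr}_2\circ g)_\bullet(f_1^!\CM)=(f_2)_\bullet(f_1^!\CM)$. Comparing the two computations yields the asserted isomorphism, and it is natural in $\CM$ since each step is.

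The point needing the most care — and the only genuine obstacle — is the last step: compatibility of $f_\bullet$ with composition is exactly what fails for an arbitrary $f_\bullet$, so one must invoke it only where licensed, namely with the representable (hence safe) inner morphism $g$; this is precisely where the hypothesis on $f_2$ is really used, through the representability of $g$ established in the first step. The remaining verifications — well-definedness of the relative graph, identification of the fiber products, naturality — are routine and I would omit them, citing \cite{DrGa1} for the projection formula for representable maps and for the composition behaviour of $f_\blacktriangle$ and of $f_\bullet$ along safe morphisms.
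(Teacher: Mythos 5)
Your argument is correct and follows essentially the same route as the paper: representability of $f_1\times f_2$ gives (i) at once, and (ii) is the projection formula for $f_1\times f_2$ plus the fact that $g_\bullet\circ g'_\bullet\to(g\circ g')_\bullet$ is an isomorphism when the inner morphism $g'$ is representable. The only difference is that you spell out why $f_1\times f_2$ is representable (graph factorization), which the paper simply asserts; that verification is correct.
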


%$\sF_\CQ$ identifies with $(f_2)_\blacktriangle\circ (f_1)^!$.

\begin{proof}
Since $f_2:\CY_0\to\CY_2$ is representable, so is the morphism 
$f_1\times f_2:\CY_0\to\CY_1\times\CY_2\,$. This implies (i). 

\medskip

We have canonical isomorphisms
$$\on{pr}_1^!(\CM)\sotimes \CQ\simeq (f_1\times f_2)_\blacktriangle (f_1^!(\CM ))\simeq (f_1\times f_2)_\bullet (f_1^!(\CM ))$$
(the first one holds by projection formula and the second because $f_1\times f_2$ is representable). So
$$(\on{pr}_2)_\bullet(\on{pr}_1^!(\CM)\sotimes \CQ)\simeq
((\on{pr}_2)_\bullet\circ (f_1\times f_2)_\bullet) (f_1^!(\CM )).$$
One also has
$$\sF_\CQ (\CM )\simeq  (f_2)_\blacktriangle\circ (f_1)^! (\CM )\simeq (f_2)_\bullet (f_1^!(\CM ))=
((\on{pr}_2)\circ (f_1\times f_2))_\bullet (f_1^!(\CM )).$$

Finally, the fact that $f_1\times f_2$ is representable (see \cite[Proposition 7.5.7]{DrGa1}
\footnote{For any composable morphisms $g,g'$ between stacks one has a morphism $g_\bullet\circ g'_\bullet\to (g\circ g')_\bullet\,$, 
which is \emph{not necessarily an isomorphism}. However, it is an isomorphism if $g'$ is representable. In 
\cite[Proposition 7.5.7]{DrGa1} this is proved if $g'$ is schematic, but the same proof applies if $g'$ is only representable.}) implies that 
$$((\on{pr}_2)\circ (f_1\times f_2))_\bullet\simeq (\on{pr}_2)_\bullet\circ (f_1\times f_2)_\bullet\,.$$
\end{proof}

%\ssec{The unit of adjunction at the level of kernels}
\ssec{The unit of adjunction: plan of the construction}  \sssec{} \label{sss:reformulating}
In \secref{sss:Consider now} we introduced the stacks
$$\CZ:=Z/\BG_m\, ,\,\, \CZ^0:=Z^0/\BG_m\, ,\,\, \CZ^{\pm}:=Z^{\pm}/\BG_m$$
and the morphisms
$\sfp^{\pm}:\CZ^{\pm}\to \CZ\,$, $\sfq^{\pm}:\CZ^{\pm}\to\CZ^0.$ %\,\, \sfi^{\pm}:\CZ^0\to \CZ^{\pm}$$
Now consider the diagram
\begin{equation} \label{e:unit diagram}
\xy
(-20,0)*+{\CZ^0}="X";
(20,0)*+{\CZ.}="Y";
(0,20)*+{\CZ^-}="Z";
(-40,20)*+{\CZ^+}="W";
(-60,0)*+{\CZ}="U";
(-20,40)*+{\CZ^+\underset{\CZ^0}\times \CZ^-}="V";
{\ar@{->}_{\sfq^-} "Z";"X"};
{\ar@{->}^{\sfp^-} "Z";"Y"};
{\ar@{->}^{\sfq^+} "W";"X"};
{\ar@{->}_{\sfp^+} "W";"U"}; 
{\ar@{->}^{'\sfq^+} "V";"Z"};
{\ar@{->}_{'\sfq^-} "V";"W"};
\endxy
\end{equation} 
Our goal is to construct a canonical morphism from $\on{Id}_{\Dmod(\CZ)}$ to the composed functor
\begin{equation} \label{e:other comp}
\left((\sfp^-)_\bullet\circ (\sfq^-)^!\right)\circ \left((\sfq^+)_\bullet \circ (\sfp^+)^!\right):\Dmod(\CZ)\to \Dmod(\CZ).
\end{equation}
The good news is that all morphisms in diagram \eqref{e:unit diagram} are representable.
In particular, $\sfp^-$ and $\sfq^+$ are representable, so $(\sfp^-)_\bullet=(\sfp^-)_\blacktriangle$ and 
$(\sfq^+)_\bullet =(\sfq^+)_\blacktriangle\,$. 

\medskip

Thus, the problem is to construct a canonical morphism from 
$\on{Id}_{\Dmod(\CZ)}$ to the  composed functor
\begin{equation} \label{e:non-dangerous}
\left((\sfp^-)_\blacktriangle\circ (\sfq^-)^!\right)\circ \left((\sfq^+)_\blacktriangle \circ (\sfp^+)^!\right):\Dmod(\CZ)\to \Dmod(\CZ).
\end{equation}
Using base change\footnote{Since we have switched to the renormalized direct images, we can apply base change and do other standard manipulations.}, we further identify the functor \eqref{e:non-dangerous} with
\begin{equation} \label{e:other comp triangle}
(\sfp^-\circ {}'\sfq^+)_\blacktriangle\circ (\sfp^+\circ {}'\sfq^-)^!,
\end{equation}
where $'\sfq^+$ and $'\sfq^-$ are as in diagram \eqref{e:unit diagram}.

\sssec{}  \label{sss:q0&1}

Set $$\CQ_0:=(\sfp^+\times \sfp^-)_\blacktriangle(\omega_{\CZ^+\underset{\CZ^0}\times \CZ^-})\in \Dmod(\CZ\times \CZ).$$

\medskip

Then the functor \eqref{e:other comp triangle} (and, hence, \eqref{e:other comp}) 
is canonically isomorphic to $\sF_{\CQ_0}\,$. 
 
\medskip

The identity functor $\Dmod(\CZ)\to \Dmod(\CZ)$ equals $\sF_{\CQ_1}$, 
where
$$\CQ_1:=(\Delta_\CZ)_\blacktriangle(\omega_\CZ)\in \Dmod(\CZ\times \CZ).$$

\sssec{}
In \secref{ss:constructing map of kernels} we will construct a canonical map 
\begin{equation} \label{e:map on kernels}
\CQ_1\to \CQ_0\, .
\end{equation}

By Sects. \ref{sss:reformulating}-\ref{sss:q0&1} and \ref{sss:functors and kernels}, the map of kernels 
\eqref{e:map on kernels}
induces a natural transformation
\begin{equation} \label{e:Braden unit}
\on{Id}_{\Dmod(\CZ)}\to \left((\sfp^-)_\blacktriangle\circ (\sfq^-)^!\right)\circ \left((\sfq^+)_\blacktriangle \circ (\sfp^+)^!\right)
\end{equation}
between the corresponding functors.

\medskip

In \secref{s:Verifying} we will prove that the natural transformations \eqref{e:Braden unit}
and \eqref{e:Braden co-unit equiv} satisfy the properties of unit and co-unit of an adjunction between the functors 
$(\sfq^+)_\blacktriangle \circ (\sfp^+)^!$ and $(\sfp^-)_\blacktriangle\circ (\sfq^-)^!$.

\ssec{Constructing the morphism \eqref{e:map on kernels}}   \label{ss:constructing map of kernels}
We will first define an object $$\CQ\in\Dmod(\BA^1\times \CZ\times \CZ)^{\BG_m\on{-mon}}\, ,$$ which 
``interpolates" between $\CQ_1$ and $\CQ_0$. We will then define \eqref{e:map on kernels} to be the 
specialization morphism $\on{Sp}_\CQ\,$.

\sssec{}   \label{sss:concrete situation}

Recall the algebraic space $\wt{Z}$ from \secref{s:deg} and 
set $$\wt\CZ:=\wt{Z}/\BG_m,\quad  \wt\CZ_t:=\wt{Z}_t/\BG_m\simeq \wt\CZ\underset{\BA^1}\times \{t\}$$
(the action of $\BG_m$ on $\wt\CZ$ was defined in \secref{sss:anti-diagonal}).

\medskip

Consider the morphisms
$$\wt\sfp :\wt\CZ\to \BA^1\times \CZ\times \CZ \,\,\,\text{   and   } \,\,\,\wt\sfp_t :\wt\CZ_t\to \CZ\times \CZ$$
induced by the maps \eqref{e:tilde p} and \eqref{e:tilde p_t}, respectively.

\medskip

Set 
$$\CQ:=\wt\sfp_\blacktriangle(\omega_{\wt\CZ})\in \Dmod(\BA^1\times \CZ\times \CZ).$$

\sssec{}  \label{sss:Q mon}

We claim that $\CQ$ belongs to the subcategory $\Dmod(\BA^1\times \CZ\times \CZ)^{\BG_m\on{-mon}}$. 

\medskip

In fact, we claim that $\CQ$ is the pullback of a canonically defined object of the category $\Dmod(\BA^1/\BG_m\times \CZ\times \CZ)$.
Indeed, this follows from the existence of the Cartesian diagram

\medskip

$$
\CD
\wt\CZ   @>{=}>>  \wt{Z}/\BG_m    @>>>   \wt{Z}/\BG_m\times \BG_m   \\
@V{\wt\sfp}VV     @V{\wt{p}/\BG_m}VV     @VVV         \\
\BA^1\times \CZ \times \CZ  @>{=}>> \BA^1\times Z/\BG_m\times Z/\BG_m   @>>>  \BA^1/\BG_m\times Z/\BG_m\times Z/\BG_m,
\endCD
$$
where $\BG_m\times \BG_m$ acts on $\wt{Z}$ as in \secref{sss:action of G_m^2}.

\sssec{}  \label{sss:concrete}

Recall that the pair $(\wt{Z}_1,\wt{p}_1)$ identifies with $(Z,\Delta_Z)$, and the pair
$(\wt{Z}_0,\wt{p}_0)$ identifies with $(Z^+\underset{Z^0}\times Z^-,p^+\times p^-)$. 

\medskip

Therefore, the pair $(\wt\CZ_1,\wt\sfp_1)$ identifies with $(\CZ,\Delta_\CZ)$, and the pair
$(\wt\CZ_0,\wt\sfp_0)$ identifies with $(\CZ^+\underset{\CZ^0}\times \CZ^-,\sfp^+\times \sfp^-)$. 

\medskip

Hence, by base change, the objects $\CQ_1$ and $\CQ_0$ from \secref{sss:q0&1} identify with the !-restrictions of
$\CQ$ to
$$\{1\}\times \CZ\times \CZ\to \BA^1\times \CZ\times \CZ \text{ and }
\{0\}\times \CZ\times \CZ\to \BA^1\times \CZ\times \CZ,$$
respectively.

\medskip

Now, the sought-for map \eqref{e:map on kernels} is given by the map $\on{Sp}_\CQ$ 
of \eqref{e:specialization}.

\section{Verifying the adjunction properties}   \label{s:Verifying}
In \secref{sss:Consider now} we introduced the stacks
$$\CZ:=Z/\BG_m\, ,\,\, \CZ^0:=Z^0/\BG_m\, ,\,\, \CZ^{\pm}:=Z^{\pm}/\BG_m$$
and the morphisms
$\sfp^{\pm}:\CZ^{\pm}\to \CZ\,$, $\sfq^{\pm}:\CZ^{\pm}\to\CZ^0.$ %\,\, \sfi^{\pm}:\CZ^0\to \CZ^{\pm}$$

\medskip

In %\secref{s:Braden1} 
Sects. \ref{ss:Reformulation of Braden}-\ref{ss:equivariant version}
we constructed a natural transformation
$$\left((\sfq^+)_\bullet \circ (\sfp^+)^!\right)\circ \left((\sfp^-)_\bullet\circ (\sfq^-)^!\right)\to \on{Id}_{\Dmod(\CZ^0)}
\, ,$$
see formula \eqref{e:Braden co-unit equiv}. Since the morphisms $\sfp^-$ and $\sfq^+$ are representable we have 
$(\sfp^-)_\bullet=(\sfp^-)_\blacktriangle$ and 
$(\sfq^+)_\bullet =(\sfq^+)_\blacktriangle\,$.  So the above natural transformation \eqref{e:Braden co-unit equiv} 
can be rewritten as a natural transformation
\begin{equation} \label{e:2Braden co-unit} 
\left((\sfq^+)_\blacktriangle \circ (\sfp^+)^!\right)\circ \left((\sfp^-)_\blacktriangle\circ (\sfq^-)^!\right)\to \on{Id}_{\Dmod(\CZ^0)}\, .
\end{equation}

In \secref{s:unit} we  constructed a natural transformation
\begin{equation}   \label{e:2Braden unit}
 \on{Id}_{\Dmod(\CZ)}\to \left((\sfp^-)_\blacktriangle\circ (\sfq^-)^!\right)\circ \left((\sfq^+)_\blacktriangle \circ (\sfp^+)^!\right),
\end{equation}
see formula \eqref{e:Braden unit}.

\medskip

To prove Theorem~\ref{t:Braden adj equiv}, it suffices to
%Our current goal is to 
show that the compositions
\begin{equation} \label{e:first composition}
(\sfp^-)_\blacktriangle\circ (\sfq^-)^!\to \left((\sfp^-)_\blacktriangle\circ (\sfq^-)^!\right)\circ \left((\sfq^+)_\blacktriangle \circ 
(\sfp^+)^!\right)\circ \left((\sfp^-)_\blacktriangle\circ (\sfq^-)^!\right)\to (\sfp^-)_\blacktriangle\circ (\sfq^-)^!
\end{equation}
and 
\begin{equation} \label{e:second composition}
(\sfq^+)_\blacktriangle \circ (\sfp^+)^!\to \left((\sfq^+)_\blacktriangle \circ 
(\sfp^+)^!\right)\circ \left((\sfp^-)_\blacktriangle\circ (\sfq^-)^!\right)\circ \left((\sfq^+)_\blacktriangle \circ (\sfp^+)^!\right)\to
(\sfq^+)_\blacktriangle \circ (\sfp^+)^!\
\end{equation}
corresponding to \eqref{e:2Braden co-unit} and \eqref{e:2Braden unit} are isomorphic to \footnote{In the future we will 
skip the words ``isomorphic to"  in similar situations. (This is a slight abuse of language since we work with the DG categories of 
D-modules rather than with their homotopy categories.)} the identity morphisms.

\medskip

We will do so for the composition \eqref{e:first composition}. The case of \eqref{e:second composition}
is similar and will be left to the reader. 

\medskip

The key point of the proof is \secref{sss:key}, which relies on the geometric \propref{p:2open embeddings}.
More precisely, we use the part of \propref{p:2open embeddings} about $Z^-$.
To treat the composition \eqref{e:second composition}, one has to use the part of \propref{p:2open embeddings} about $Z^+$. 

\ssec{The diagram describing the composed functor}

\sssec{The big diagram}

We will use the notation
\begin{equation}   \label{e:BIG}
\BIG:=\left((\sfp^-)_\blacktriangle\circ (\sfq^-)^!\right)\circ \left((\sfq^+)_\blacktriangle \circ (\sfp^+)^!\right)\circ
\left((\sfp^-)_\blacktriangle\circ (\sfq^-)^!\right).
\end{equation}

\medskip

By base change, $\BIG$ is given by pull-push along the following diagram: 

\begin{equation} \label{e:comp diag 1}
\xy
(-20,0)*+{\CZ^0}="X";
(20,0)*+{\CZ\, .}="Y";
(0,20)*+{\CZ^-}="Z";
(-40,20)*+{\CZ^+}="W";
(-60,0)*+{\CZ}="U";
(-20,40)*+{\CZ^+\underset{\CZ^0}\times \CZ^-}="V";
(-100,0)*+{\CZ^0}="T";
(-80,20)*+{\CZ^-}="S";
(-60,40)*+{\CZ^-\underset{\CZ}\times \CZ^+}="R";
(-40,60)*+{\CZ^-\underset{\CZ}\times \CZ^+\underset{\CZ^0}\times \CZ^-}="Q";
{\ar@{->}_{\sfq^-} "Z";"X"};
{\ar@{->}^{\sfp^-} "Z";"Y"};
{\ar@{->}^{\sfq^+} "W";"X"};
{\ar@{->}_{\sfp^+} "W";"U"}; 
{\ar@{->} "V";"Z"};
{\ar@{->} "V";"W"};
{\ar@{->}_{\sfq^-} "S";"T"};
{\ar@{->}^{\sfp^-} "S";"U"};
{\ar@{->} "R";"S"};
{\ar@{->} "R";"W"};
{\ar@{->} "Q";"R"};
{\ar@{->} "Q";"V"};
\endxy
\end{equation}

\sssec{Some notation}

Set
\begin{equation}   \label{e:tilde Z-}
\wt{Z}^-:=Z^-\underset{Z}\times \wt{Z}\, ,
\end{equation}
where the fiber product is formed using the composition
\[
\wt{Z} \overset{\wt{p}}\longrightarrow \BA^1\times Z\times Z\to Z\times Z \overset{\on{pr}_1}\longrightarrow Z
\]
(i.e., the morphism $\pi_1:\wt{Z}\to Z$ from \secref{sss:tilde p}).

\medskip

For $t\in \BA^1$ set $\wt{Z}^-_t:=Z^-\underset{Z}\times \wt{Z}_t\,$.

\medskip

Let $\wt{p}^-:\wt{Z}^-\to \BA^1\times Z^-\times Z$ denote  the map obtained by base change from
$$\wt{p}:\wt{Z} \to \BA^1\times Z\times Z\,.$$
Let 
$$r: \wt{Z}^-\to \BA^1\times Z^0\times Z$$
denote the composition of $\wt{p}^-:\wt{Z}^-\to \BA^1\times Z^-\times Z$ with the morphism
$$\id_{\BA^1}\times q^-\times \id_Z:\BA^1\times Z^-\times Z\to\BA^1\times Z^0\times Z.$$
%$$(\id_{\BA^1}\times q^-\times \id_Z)\circ \wt{p}^-.$$
Let $r_t:\wt{Z}^-_t\to Z^0\times Z$ denote the morphism induced by $r: \wt{Z}^-\to \BA^1\times Z^0\times Z\,$.
%$\wt{Z}^-_t\to Z^0\times Z\,$.

\sssec{More notation}   \label{sss:stacky notation}
%\sssec{}   \label{sss:concrete situation}

Recall that
%the algebraic space $\wt{Z}$ from \secref{s:deg} and set 
$$\wt\CZ:=\wt{Z}/\BG_m,\quad  \wt\CZ_t:=\wt{Z}_t/\BG_m\simeq \wt\CZ\underset{\BA^1}\times \{t\}\, ,$$
where $\wt{Z}$ is the algebraic space from \secref{s:deg}
(the action of $\BG_m$ on $\wt\CZ$ was defined in \secref{sss:anti-diagonal}).

\medskip

Set
$$\wt\CZ^-:=\CZ^-\underset{\CZ}\times \wt{\CZ}=\wt Z^-/\BG_m\, ,\quad\quad  
\wt\CZ^-_t:=\CZ^-\underset{\CZ}\times \wt{\CZ}_t=\wt Z^-_t/\BG_m\,.$$

Let 
%$\wt\CZ^-$ and $\wt\CZ^-_t$ denote the corresponding objects,
%obtained by dividing by the action of $\BG_m\,$, and
$$\sfr:\wt\CZ^-\to \BA^1\times \CZ^0\times \CZ\, ,\quad \sfr_t:\wt\CZ^-_t\to \CZ^0\times \CZ$$
be the morphisms induced by $r: \wt{Z}^-\to \BA^1\times Z^0\times Z$ and $r_t:\wt{Z}^-_0\to Z^0\times Z$, respectively. In particular, we have the morphisms $\sfr_0$ and $\sfr_1$ corresponding to $t=0$ and $t=1$.

\sssec{A smaller diagram describing the functor $\BIG$}  \label{sss:small diagram}
By \propref{p:tilde Z_0}, we have an isomorphism $\wt{Z}_0\iso Z^+\underset{Z^0}\times Z^-$.
The corresponding isomorphism 
$$\wt{Z}^-_0:=Z^-\underset{Z}\times \wt{Z}_0\iso Z^-\underset{Z}\times Z^+\underset{Z^0}\times Z^-$$ 
induces
%The isomorphism \eqref{e:0 fiber} defines 
an isomorphism
$$\wt\CZ^-_0\simeq \CZ^-\underset{\CZ}\times \CZ^+\underset{\CZ^0}\times \CZ^-.$$

Thus the upper term of diagram \eqref{e:comp diag 1} is $\wt\CZ^-_0$. The compositions 
\[
\CZ^-\underset{\CZ}\times \CZ^+\underset{\CZ^0}\times \CZ^-\to \CZ^-\underset{\CZ}\times \CZ^+\to 
\CZ^-\overset{\, \sfq^-}\longrightarrow \CZ^0 \quad \mbox{and}\quad
%/, , \quad\quad
%\]
%\[
\CZ^-\underset{\CZ}\times \CZ^+\underset{\CZ^0}\times \CZ^-\to \CZ^+\underset{\CZ^0}\times \CZ^-\to \CZ^-
\overset{\, \sfp^-}\longrightarrow  \CZ
\]
from diagram \eqref{e:comp diag 1} are equal, respectively, to the compositions
\[
\wt{\CZ}^-_0\overset{\sfr_0}\longrightarrow \CZ^0\times \CZ \overset{\on{pr}_1}\longrightarrow \CZ^0
%\]
\quad \mbox{and}\quad
%\[
\wt{\CZ}^-_0\overset{\sfr_0}\longrightarrow \CZ^0\times \CZ \overset{\on{pr}_2}\longrightarrow \CZ
\]
(the morphism $\sfr_0$ was defined in \secref{sss:stacky notation}).

\medskip

Hence, the functor $\BIG$ is given by pull-push along the diagram

\begin{equation}  \label{e:comp diag B}
\xy
(-20,0)*+{\CZ^0}="X";
(20,0)*+{\CZ\, .}="Y";
(0,20)*+{\wt\CZ^-_0}="Z";
{\ar@{->}_{\on{pr}_1\circ \sfr_0}  "Z";"X"};
{\ar@{->}^{\on{pr}_2\circ \sfr_0}  "Z";"Y"};
\endxy
\end{equation}

\ssec{The natural transformations at the level of kernels}  \label{ss:nat trans via kernels}
The goal of this subsection is to describe the natural transformations 
$$\BIG\to (\sfp^-)_\blacktriangle\circ (\sfq^-)^! \text{ and } (\sfp^-)_\blacktriangle\circ (\sfq^-)^! \to \BIG$$
at the level of kernels.

\sssec{The kernel corresponding to $\BIG$}

Set
$$\CS:=\sfr_\blacktriangle (\omega_{\wt\CZ^-})\in \Dmod(\BA^1\times \CZ^0\times \CZ),$$
where $\sfr:\wt\CZ^-\to \BA^1\times \CZ^0\times \CZ$ was defined in \secref{sss:stacky notation}.

\medskip

As in \secref{sss:Q mon}, one shows that
$$\CS\in \Dmod(\BA^1\times \CZ^0\times \CZ)^{\BG_m\on{-mon}}\, .$$

Set also
$$\CS_0:=(\sfr_0)_\blacktriangle (\omega_{\wt\CZ^-_0})\in \Dmod(\CZ^0\times \CZ),\quad \quad
\CS_1:=(\sfr_1)_\blacktriangle (\omega_{\wt\CZ^-_1})\in \Dmod(\CZ^0\times \CZ).$$
By \secref{sss:small diagram}, the functor $\Phi$ identifies with $\sF_{\CS_0}\,$. 

\sssec{The kernel corresponding to $(\sfp^-)_\blacktriangle\circ (\sfq^-)^! $}
%Let us now describe the natural transformations 
%$$\BIG\to (\sfp^-)_\blacktriangle\circ (\sfq^-)^! \text{ and } (\sfp^-)_\blacktriangle\circ (\sfq^-)^! \to \BIG$$
%at the level of kernels.
%\medskip
Now set
$$\CT:=(\sfq^-\times \sfp^-)_\blacktriangle(\omega_{\CZ^-}).$$ 
We have 
$$(\sfp^-)_\blacktriangle\circ (\sfq^-)^! \simeq \sF_{\CT}\, .$$

\sssec{}  \label{sss:j tilde}

Recall the open embedding
$$j:Z^0\hookrightarrow Z^-\underset{Z}\times Z^+,$$
see \propref{p:Cartesian}.  

\medskip

Let $j^-$ denote the corresponding open embedding 
$$Z^-\hookrightarrow Z^-\underset{Z}\times Z^+\underset{Z^0}\times Z^-\simeq Z^-\underset{Z}\times \wt{Z}_0=: \wt{Z}^-_0,$$
obtained by base change. 

\medskip

Let $\sfj^-$ denote the corresponding open embedding
$$\CZ^-\hookrightarrow \wt\CZ^-_0.$$

Note that the composition
\[
\CZ^-\overset{\,\sfj^-}\hookrightarrow \wt\CZ^-_0\overset{\sfr_0}\longrightarrow \CZ^0\times \CZ
\]
equals $\sfq^-\times \sfp^-$.

\sssec{The morphism $\BIG\to(\sfp^-)_\blacktriangle\circ (\sfq^-)^!$ at the level of kernels}
\label{sss:1at the level of kernels} \hfill

\medskip

Recall that the morphism $\BIG\to(\sfp^-)_\blacktriangle\circ (\sfq^-)^!$ comes from the morphism 
$$\left((\sfq^+)_\blacktriangle \circ (\sfp^+)^!\right)\circ \left((\sfp^-)_\blacktriangle\circ (\sfq^-)^!\right)\to \on{Id}_{\Dmod(\CZ^0)}$$ 
constructed in Sects. \ref{ss:Reformulation of Braden}-\ref{ss:equivariant version}.
By construction,  the natural transformation $$\BIG\to(\sfp^-)_\blacktriangle\circ (\sfq^-)^!$$ 
corresponds to the map of kernels
\begin{equation} \label{e:S_0 to T}
\CS_0\to \CT
\end{equation}
equal to the composition
$$\CS_0:=(\sfr_0)_\blacktriangle (\omega_{\wt\CZ^-_0})\to
(\sfr_0)_\blacktriangle\circ \sfj^-_\blacktriangle (\omega_{\CZ^-}) \iso (\sfq^-\times \sfp^-)_\blacktriangle(\omega_{\CZ^-})=:\CT,$$
where the first arrow comes from
$$\omega_{\wt\CZ^-_0}\to \sfj^-_\bullet\circ (\sfj^-)^\bullet(\omega_{\wt\CZ^-_0})\simeq \sfj^-_\bullet(\omega_{\CZ^-})\simeq
\sfj^-_\blacktriangle(\omega_{\CZ^-}).$$

\sssec{The isomorphism $\CT\simeq \CS_1\,$} \hfill

\medskip

The (tautological) identification $\wt{Z}_1\simeq Z$ defines an identification
\begin{equation}   \label{e:tautological identification}
\wt\CZ^-_1\simeq \CZ^-,
\end{equation}
so that
%$$\sfr_1=(\sfq^-\times \sfp^-).$$
the morphism $\sfr_1:\CZ^-_1\rightarrow \CZ^0\times \CZ$  identifies with $\sfq^-\times \sfp^-$.

\medskip

Hence, we obtain a tautological identification
\begin{equation} \label{e:T to S_1}
\CT\simeq \CS_1\, .
\end{equation}

\sssec{The morphism $(\sfp^-)_\blacktriangle\circ (\sfq^-)^!\to\BIG$ at the level of kernels}
\label{sss:2at the level of kernels}  \hfill

\medskip

The map $\on{Sp}_\CS$ of \eqref{e:specialization} defines a canonical map
\begin{equation} \label{e:S_1 to S_0}
\CS_1\to \CS_0\, .
\end{equation}

By \secref{sss:functoriality of specialization}, the natural transformation 
$(\sfp^-)_\blacktriangle\circ (\sfq^-)^!\to\BIG$
comes from the map
\begin{equation} \label{e:T to S_0}
\CT\to \CS_1\to \CS_0\, ,
\end{equation}
equal to the composition of \eqref{e:T to S_1} and \eqref{e:S_1 to S_0}.

\sssec{Conclusion}
Thus, in order to prove that the composition \eqref{e:first composition} is the identity map, 
it suffices to show that the composed map
\begin{equation} \label{e:composed kernels}
\CT\to \CS_1\to \CS_0\to \CT
\end{equation}
is the identity map on $\CT$. 

\ssec{Passing to an open substack}

\sssec{}

Recall the open embedding
$$j^-:Z^-\hookrightarrow \wt{Z}^-_0$$ 
introduced in \secref{sss:j tilde}. 

\medskip

Let $\overset{\circ}{\wt{Z}}{}^-$ denote the open subset of $\wt{Z}^-$ obtained by removing the closed
subset
$$\left(\wt{Z}^-_0-Z^-\right)\subset \wt{Z}^-_0\subset \wt{Z}^-.$$

\medskip

Let $\overset{\circ}{\wt\CZ}{}^-$ denote the corresponding open substack of $\wt\CZ^-$. 
Let $\overset{\circ}{\wt\CZ}{}^-_t$ denote the fiber of $\overset{\circ}{\wt\CZ}{}^-$ over $t\in \BA^1$.

\medskip

By definition, the open embedding 
$$\sfj^-:\CZ^-\hookrightarrow \wt\CZ^-_0$$
defines an \emph{isomorphism}
\begin{equation} \label{e:fiber at 0 open}
\CZ^-\iso \overset{\circ}{\wt\CZ}{}^-_0.
\end{equation}

\medskip

Note that the isomorphism $\CZ^-\iso \wt\CZ{}^-_1$ of \eqref{e:tautological identification} still defines an isomorphism
\begin{equation} \label{e:fiber at 1 open}
\CZ^-\iso \overset{\circ}{\wt\CZ}{}^-_1.
\end{equation}

\sssec{}

Let 
$$\osfr:\overset{\circ}{\wt\CZ}{}^-\to \BA^1\times \CZ^0\times \CZ\quad  \text{ and } \quad
\osfr_t:\overset{\circ}{\wt\CZ}{}^-\to \CZ^0\times \CZ$$
denote the morphisms induced by the maps $\sfr$ and $\sfr_t$ from
\secref{sss:stacky notation}.

\medskip

Set
$$\oCS:=\osfr_\blacktriangle(\omega_{\overset{\circ}{\wt\CZ}{}^-}),$$
and also
$$\oCS_0:=(\osfr_0)_\blacktriangle(\omega_{\overset{\circ}{\wt\CZ}{}^-_0})\quad \text{ and } \quad
\oCS_1:=(\osfr_1)_\blacktriangle(\omega_{\overset{\circ}{\wt\CZ}{}^-_1}).$$

\medskip

The open embedding $\overset{\circ}{\wt\CZ}{}^-\hookrightarrow \wt\CZ^-$ gives rise to the maps
$$\CS\to \oCS,\quad  \CS_0\to \oCS_0, \quad \CS_1\to \oCS_1\, .$$

\medskip

As in %\secref{ss:nat trans via kernels}, 
Sects. \ref{sss:1at the level of kernels}-\ref{sss:2at the level of kernels}, we have the natural transformations
\begin{equation}  \label{e:composed kernels open}
\CT\to \oCS_1\to \oCS_0\to \CT.
\end{equation}

Moreover, the diagram 
$$
\CD
\CT   @>>>  \CS_1  @>>> \CS_0  @>>>  \CT  \\
@V{\id}VV  @VVV    @VVV   @VV{\id}V   \\
\CT  @>>>  \oCS_1   @>>>   \oCS_0  @>>>  \CT
\endCD
$$
commutes. 

\medskip

Hence, %we obtain that 
in order to show that the composed map \eqref{e:composed kernels} is the identity map,
\emph{it suffices to show that the composed map \eqref{e:composed kernels open} is the identity map. }
We will do this in the next subsection.

\ssec{The key argument}   

\sssec{}   \label{sss:key}

Recall now the open embedding
$$\BA^1\times Z^-\to Z^-\underset{Z}\times \wt{Z}=:\wt{Z}^-$$
of \eqref{e:embedding2}.  

\medskip

By definition, it induces an isomorphism 
$$\BA^1\times Z^-\simeq \overset{\circ}{\wt{Z}}{}^-.$$

Dividing by the action of $\BG_m$, we obtain an isomorphism
\begin{equation} \label{e:key}
\BA^1\times \CZ^-\simeq \overset{\circ}{\wt\CZ}{}^-.
\end{equation}

Under this identification, we have:

\medskip

\begin{itemize}

\item
The map $\osfr:\overset{\circ}{\wt\CZ}{}^-\to \BA^1\times \CZ^0\times \CZ$
identifies with the map $\BA^1\times \CZ^-\to\BA^1\times \CZ^0\times \CZ$ induced by
$\id_{\BA^1}:\BA^1\to \BA^1$ and $ (\sfq^-\times \sfp^-): \CZ^-\to \CZ^0\times \CZ\,$.
%$$\id_{\BA^1}\times (\sfq^-\times \sfp^-).$$

\medskip

\item
The isomorphism $\CZ^-\iso \overset{\circ}{\wt\CZ}{}^-_1$ of \eqref{e:fiber at 1 open}
corresponds to the identity map
$$\CZ^-\to (\BA^1\times \CZ^-)\underset{\BA^1}\times \{1\}\simeq \CZ^-.$$

\item
The isomorphism $\CZ^-\iso \overset{\circ}{\wt\CZ}{}^-_0$ of \eqref{e:fiber at 0 open}
corresponds to the identity map
$$\CZ^-\to (\BA^1\times \CZ^-)\underset{\BA^1}\times \{0\}\simeq \CZ^-.$$

\end{itemize}

\sssec{}

Hence, we obtain that the composition \eqref{e:composed kernels open} identifies with
$$\CT\simeq \iota_1^!(\omega_{\BA^1}\boxtimes \CT) \overset{\on{Sp}}\longrightarrow \iota_0^!(\omega_{\BA^1}\boxtimes \CT) \simeq \CT,$$
where $\on{Sp}:=\on{Sp}_{\omega_{\BA^1}\boxtimes \CT}$
%$$\iota_1^!(\omega_{\BA^1}\boxtimes \CT) \to \iota_0^!(\omega_{\BA^1}\boxtimes \CT)$$
is the specialization map \eqref{e:specialization} for the object
$$\omega_{\BA^1}\boxtimes \CT \in \Dmod(\BA^1\times \CZ^0\times \CZ).$$

\medskip

The fact that the above map is the identity map on $\CT$ follows from \secref{sss:specialization for constant}.

\appendix

\section{Pro-categories}  \label{s:pro} \hfill

\medskip

\noindent{\bf A.1.} For a DG category $\bC$ let $\on{Pro}(\bC)$ denote its pro-completion, thought of as the DG category opposite to
that of covariant exact functors $\bC\to \Vect$, where $\Vect$ denotes the DG category of complexes of $k$-vector spaces.
\footnote{A way to deal with set-theoretical difficulties is to require
that our functors commute with $\kappa$-filtered colimits for some cardinal $\kappa$, see 
\cite[Def.~5.3.1.7]{Lur}.} 

\medskip

Yoneda embedding defines
a fully faithful functor $\bC\to \on{Pro}(\bC)$. Any object in $\on{Pro}(\bC)$ can be written as a filtered limit
(taken in $\on{Pro}(\bC)$) of co-representable functors. 

\medskip

\noindent{\bf A.2.} 
A functor $\sF:\bC'\to \bC''$
between DG categories induces a functor denoted also by $\sF$
$$\on{Pro}(\bC')\to \on{Pro}(\bC'')$$ by applying the 
\emph{right Kan extension} of the functor
$$\bC'\overset{\sF}\longrightarrow \bC''\hookrightarrow \on{Pro}(\bC'')$$ 
along the embedding $\bC'\to \on{Pro}(\bC')$. 

\medskip

The same construction can be phrased as follows: for $\wt\bc'\in \on{Pro}(\bC')$, thought
of as a functor $\bC'\to \Vect$, the object $\sF(\wt\bc')$, thought of as a functor $\bC''\to \Vect$, is the 
\emph{left Kan extension} of $\wt\bc'$ along the functor $\sF:\bC'\to \bC''$. 

\medskip

Explicitly, if $\wt\bc\in \on{Pro}(\bC')$ is written as $\underset{i\in I}{\underset{\longleftarrow}{lim}}\, \bc_i$
with $\bc_i\in \bC'$, then
$$\sF(\wt\bc)\simeq \underset{i\in I}{\underset{\longleftarrow}{lim}}\, \sF(\bc_i),$$
as objects of $\on{Pro}(\bC'')$ and 
$$\sF(\wt\bc)\simeq \underset{i\in I}{\underset{\longrightarrow}{lim}}\, \CMaps_{\bC''}(\sF(\bc_i),-),$$
as functors $\bC''\to \Vect$. 

\medskip

\noindent{\bf A.3.}  
Let $\sG:\bC'\to \bC''$ be a functor between DG categories. We can speak of its left adjoint $\sG^L$
as a functor $\bC''\to \on{Pro}(\bC')$. Namely, for $\bc''\in \bC''$ the object $\sG^L(\bc'')\in \on{Pro}(\bC')$,
thought of as a functor $\bC'\to \Vect$ is given by
$$(\sG^L(\bc''))(\bc')=\CMaps_{\bC''}(\bc'',\sG(\bc')).$$

\medskip

\noindent{\bf A.4.} We let the same symbol $\sG^L$ also denote the functor $\on{Pro}(\bC'')\to  \on{Pro}(\bC')$ obtained as the right Kan extension of 
$\sG^L:\bC''\to  \on{Pro}(\bC')$ along
$\bC''\hookrightarrow  \on{Pro}(\bC'')$. 

\medskip

The functor $\sG^L$ is the left adjoint of the functor 
$\sG:\on{Pro}(\bC')\to \on{Pro}(\bC'')$. 

\medskip

We can also think of $\sG^L$ as follows: for
$\wt\bc''\in \on{Pro}(\bC'')$, thought of as a functor $\bC''\to \Vect$, the object $\sG^L(\wt\bc'')$, thought of as a functor
$\bC'\to \Vect$ is given by
$$(\sG^L(\wt\bc''))(\bc')=\wt\bc''(\sG(\bc')).$$


\begin{thebibliography}{99}
\bibitem[Ach] {Ach} P.~N.~Achar, {\it Green functions via hyperbolic localization}, Doc. Math.  {\bf 16}  (2011), 869--884. 

\bibitem[AC]{AC} P.~N.~Achar and  C.~L.~R.~Cunningham, {\em Toward a Mackey formula for compact
restriction of character sheaves},  arXiv:1011.1846.
  
 
\bibitem[AM]{AM} P.~N.~Achar and C.~Mautner, {\it Sheaves on nilpotent cones, Fourier transform,
  and a geometric Ringel duality}, arXiv:1207.7044.
  
%\bibitem[Ar]{Ar} M.~Artin, {\it Algebraic approximation of structures over complete local rings}, 
%Publications math\'emathiques IHES, {\bf 36} (1969), 23--58.


\bibitem[Bi-Br]{Bi-Br} S.~C.~Billey and T.~Braden,   
{\it  Lower bounds for Kazhdan-Lusztig polynomials from patterns}, Transform. Groups  {\bf 8}  (2003),  no. 4, 321--332.

%\bibitem[ABFM]{ABFM} A.~Braverman, M.~Finkelberg, D.~Gaitsgory and I.~Mirkovic,
%{\it Intersection cohomology of Drinfeld compactifications}, Selecta Math (N.S.) {\bf 8} (2002),
%381--418. Also: arXiv:math/0012129.

%\bibitem[BG]{BG} A.~Braverman, D.~Gaitsgory, {\it Geometric Eisenstein Series}, Invent. Math {\bf 150} (2002),
%287--384. Also: arXiv:math/9912097.

%\bibitem[BN]{BN} I.~Biswas, D.~S.~Nagaraj, {\it Principal bundles over the projective line},
%J. Algebra {\bf 322} (2009), no. 10, 3478--3491.

\bibitem[Bia]{Bia} A.~Bia{\l}ynicki-Birula. {\it  Some Theorems on Actions of Algebraic Groups},
Ann. of Math. (2), {\bf 98} (1973), 480--497.

\bibitem[Br]{Br} T.~Braden, {\it Hyperbolic localization of Intersection Cohomology}, Transformation Groups {\bf 8}
(2003), no. 3, 209--216. Also: arXiv:math/020225.

\bibitem[Dr]{Dr} V.~Drinfeld, {\it On algebraic spaces with an action of $\BG_m$}, arXiv:math/1308.2604.

\bibitem[DrGa1]{DrGa1}  V.~Drinfeld and D.~Gaitsgory, {\it On some finiteness questions for algebraic stacks}, Geom.and Funct. Analysis
{\bf 23} (2013), 149--294. 

\bibitem[DrGa2]{DrGa2}  V.~Drinfeld and D.~Gaitsgory, {\it Compact generation of the category of D-modules on the stack of G-bundles on a curve}, arXiv:math/1112.2402.

\bibitem[DrGa3]{DrGa3}  V.~Drinfeld and D.~Gaitsgory, {\it Geometric constant term functor(s)},
arXiv: 1311.2071. 


%\bibitem[Gi]{Gi} V.~Ginzburg, {\it Induction and restriction of character sheaves,} I.~M.~Gel'fand Seminar, 
%149--167, Adv. Soviet Math. {\bf 16}, Part 1, Amer. Math. Soc., Providence, RI, 1993.

%\bibitem[F]{F}  D.~Ferrand,  {\it Conducteur, descente et pincement}, 
%Bull. Soc. Math. France {\bf 131} (2003), no. 4, 553--585.

%\bibitem[FR]{FR}  D.~Ferrand and M.~Raynaud,  {\it Fibres formelles d'un anneau local noeth\'erien}, 
 %Ann. Sci. \'Ecole Norm. Sup. (4), {\bf 3} (1970), no. 3, 295--311.
 
 


\bibitem[GM]{GM}  M.~Goresky and R.~MacPherson, {\it Local contribution to the Lefschetz
fixed point formula}, Invent. Math. {\bf 111} (1993), 1--33.


\bibitem[GH]{GH}  I.~Grojnowski and M.~Haiman,   {\it Affine Hecke algebras and positivity of LLT
and Macdonald polynomials}, available at http://math.berkeley.edu/$\sim$mhaiman


\bibitem[Ju1]{Ju1} J.~Jurkiewicz, 
{\it An example of algebraic torus action which determines the nonfiltrable decomposition},
Bull. Acad. Polon. Sci. S\'er. Sci. Math. Astronom. Phys. {\bf 25} (1977), no. 11, 1089--1092. 

\bibitem[Ju2]{Ju2} J.~Jurkiewicz, 
{\it Torus embeddings, polyhedra, $k^\ast$-actions and homology},
Dissertationes Math. (Rozprawy Mat.) {\bf 236} (1985), 64 pp.


\bibitem[KKMS]{KKMS} G. Kempf, F. F. Knudsen, D. Mumford, and B. Saint-Donat, 
Toroidal embeddings. I. Lecture Notes in Math. {\bf 339}, Springer-Verlag, Berlin-New York, 1973. 

\bibitem[KKLV]{KKLV} F.~Knop, H.~Kraft, D.~Luna, and T.~Vust, {\it Local properties of algebraic group actions}.
In: Algebraische Transformationsgruppen und Invariantentheorie, p. 63--75,
DMV Seminar {\bf 13}, Birkh\"auser, Basel, 1989. 

\bibitem[Kn]{Kn} D.~Knutson, Algebraic spaces. Lecture
Notes in Math. {\bf 203}, Springer-Verlag, Berlin-New York, 1971.


\bibitem[Kon]{Kon} J.~Konarski, 
{\it A pathological example of an action of $k^\ast$}.
In: Group actions and vector fields (Vancouver, B.C., 1981), p. 72--78, Lecture Notes in Math. {\bf 956}, Springer, Berlin, 1982.

%\bibitem[KV]{KV} M.~Kapranov and  E.~Vasserot,
%{\it Vertex algebras and the formal loop space}, Publ. Math. IHES, {\bf 100} (2004), 209--269.

\bibitem[LM]{LM} G.~Laumon and L.~Moret-Bailly, {\it Champs alg\'ebriques},
Ergebnisse der Mathematik und ihrer Grenzgebiete (3 Folge, A Series of Modern Surveys in Mathematics), {\bf 39}, Springer-Verlag, Berlin, 2000.


\bibitem[Lur]{Lur} J.~Lurie, {\it Higher Topos Theory},
Annals of Mathematics Studies, {\bf 170}, Princeton University Press, Princeton, NJ, 2009.

\bibitem[Ly1]{Ly1} S.~Lysenko, {\it Moduli of metaplectic bundles on curves and theta-
sheaves}, Ann. Sci. \'Ecole Norm. Sup. (4)  {\bf 39}  (2006),  no. 3, 415--466.

\bibitem[Ly2]{Ly2} S.~Lysenko, {\it Geometric theta-lifting for the dual pair
SO(2m),Sp(2n)}, Ann. Sci. \'Ecole Norm. Sup. (4)  {\bf 44}  (2011),  no. 3, 427--493.


\bibitem[MV]{MV} I.~Mirkovi\'c and K.~Vilonen, {\it Geometric Langlands duality and
representations of algebraic groups over commutative rings}, Ann. of Math. (2)  {\bf 166}  (2007),  no. 1, 95--143.

%\bibitem[MB]{MB}  L.~Moret-Bailly, {\it Un probl\`eme de descente},
%Bull. Soc. Math. France {\bf 124} (1996), no.~4, 559--585.

%\bibitem[Mur]{Mur}  J. P.~Murre, {\it Representation of unramified functors. Applications. \hfill \newline
%(according to unpublished results of A. Grothendieck).} 
%S\'eminaire Bourbaki 1964-1966, vol.~{\bf 9}, expos\'e 294, 243--261.
%Available at http://www.numdam.org/ .

\bibitem[Nak]{Nak} H.~Nakajima, {\it Quiver varieties and tensor products II},  arXiv:1207.0529

%\bibitem[R]{R} R.~Reich, {\it Twisted geometric Satake equivalence via gerbes on the
%factorizable Grassmannian}, Represent. Theory  {\bf 16}  (2012), 345--449.

%\bibitem[Sch]{Sch} K.~Schwede, {\it Gluing schemes and a scheme without closed points}.
%In: Recent progress in arithmetic and algebraic geometry, p. 157--172, Contemp. Math {\bf 386}, Amer. Math. %Soc., Providence, RI, 2005. 

%\bibitem[SGA1]{SGA1} A.~Grothendieck (with M.~Raynaud) ``SGA 1:
%Rev\^{e}tements \'etales et groupe fondamental'', Documents
%Math\'ematiques (Paris) \textbf{3}, Soc. Math. de France, Paris,
%2003, {\tt math.AG/0206203}



\bibitem[Som]{Som} A.~J.~Sommese, {\it Some examples of $\BC^\ast$ actions}.
In: Group actions and vector fields (Vancouver, B.C., 1981), p. 118--124, Lecture Notes in Math. {\bf 956}, Springer, Berlin, 1982.


\bibitem[Sum]{Sum} H.~Sumihiro, {\it Equivariant completion}, J. Math. Kyoto Univ. {\bf 14} (1974), 1--28.

%\bibitem[Lus]{Lus} G.~Lusztig, {\it Fourier transforms on a semisimple Lie algebra over ${\bf F}\sb q$},  
%Algebraic groups Utrecht 1986, Lecture Notes in Math. {\bf 1271}, Springer, Berlin (1987), 177--188.

%\bibitem[GL:DG]{DG}  Notes on Geometric Langlands, {\it Generalities on DG categories}, 
%available at \newline http://www.math.harvard.edu/~gaitsgde/GL/


%\bibitem[MV]{MV} I.~Mirkovic and K.~Vilonen, 
%{\it Geometric Langlands duality and representations of algebraic groups over commutative rings}, Annals Math. {\bf 166}
%(2007), 95--143. Also: arXiv:math/0401222.

\end{thebibliography}
\end{document}